\title{Removal Lemmas with Polynomial Bounds}
\author{Lior Gishboliner \thanks{School of Mathematics, Tel Aviv University, Tel Aviv 69978, Israel.}
\and Asaf Shapira \thanks{
School of Mathematics, Tel Aviv University, Tel Aviv 69978, Israel.
Email: asafico$@$tau.ac.il. Supported in part by ISF Grant 1028/16 and ERC Starting Grant 633509.}
}
\date{\today}
\let\expandafter\oldproof\csname\string\proof\endcsname
\let\oldendproof\endproof
\renewenvironment{proof}[1][\proofname]{%
	\oldproof[\bf #1]%
}{\oldendproof}
\newcommand{\ignore}[1]{}
\theoremstyle{plain}
\newtheorem{theorem}{Theorem}
\newtheorem{lemma}{Lemma}[section]
\newtheorem{claim}[lemma]{Claim}
\newtheorem{proposition}[lemma]{Proposition}
\newtheorem{observation}[lemma]{Observation}
\newtheorem{definition}[lemma]{Definition}
\DeclareMathOperator{\poly}{poly}
\newcommand{\A}{\mathcal A}
\newcommand{\C}{\mathcal C}
\newcommand{\F}{\mathcal F}
\newcommand{\N}{\mathcal N}
\newcommand{\Q}{\mathcal Q}
\newcommand{\R}{\mathcal R}
\newcommand{\U}{\mathcal U}
\newcommand{\homleq}{\leq_{\hom}}
\definecolor{RED}{rgb}{1,0,0}\definecolor{BLUE}{rgb}{0,0,1} 
\begin{document}

\date{}

\maketitle

\begin{abstract}

A common theme in many extremal problems in graph theory is the relation between local and global properties
of graphs. One of the most celebrated results of this type is the Ruzsa-Szemer\'edi triangle removal lemma, which
states that if a graph is $\varepsilon$-far from being triangle free, then most subsets of vertices
of size $C(\varepsilon)$ are not triangle free. Unfortunately, the best known upper bound on $C(\varepsilon)$ is given by
a tower-type function, and it is known that $C(\varepsilon)$ is not polynomial in $\varepsilon^{-1}$.
The triangle removal lemma has been extended to many other graph properties, and for some
of them the corresponding function $C(\varepsilon)$ is polynomial. This raised the natural question, posed by Goldreich in 2005 and more recently by Alon and Fox, of characterizing the properties for which one can prove removal lemmas with polynomial bounds.

Our main results in this paper are new sufficient and necessary criteria for guaranteeing that
a graph property admits a removal lemma with a polynomial bound.
Although both are simple combinatorial criteria, they
imply almost all prior positive and negative results of this type. Moreover, our new sufficient conditions allow us to obtain polynomially bounded removal lemmas for many properties for which
the previously known bounds were of tower-type. In particular, we show that every {\em semi-algebraic} graph property admits a polynomially bounded removal lemma. This confirms a conjecture of Alon.

\end{abstract}


\section{Introduction}


The relation between local and global properties of graphs lies at the core of many of the most well studied problems
in extremal graph theory. Perhaps the most natural problem of this type is whether the fact that a graph is ``far'' from satisfying
a property ${\cal P}$ implies that it does not satisfy it locally. All graph properties we will consider in this
paper are hereditary (i.e. closed under removal of vertices). Note that for such properties, an induced subgraph of $G$ that does
not satisfy ${\cal P}$ is a ``witness'' to the fact that $G$ itself does not satisfy ${\cal P}$. Thus, for such properties the problem
can be phrased as follows: can we deduce from the fact that $G$ is far from satisfying some hereditary property ${\cal P}$ that $G$
contains a small subgraph which can witness this fact, and moreover, how many such small witnesses does $G$ contain?


Let us turn the above abstract problem into the concrete one we will study in this paper.
We say that a graph $G$ on $n$ vertices is $\varepsilon$-far from satisfying
a property ${\cal P}$ if one needs to add/delete at least $\varepsilon n^2$ edges in order to turn $G$ into a graph satisfying ${\cal P}$.
The following is the local-vs-global problem we will study in this paper.

\begin{definition}\label{def:test} Suppose ${\cal P}$ is a hereditary property. We say that ${\cal P}$ is {\em testable}
if there is a function $f_{\cal P}(\varepsilon)$ so that if $G$ is $\varepsilon$-far from satisfying ${\cal P}$ then the graph spanned by a random sample of $f_{\cal P}(\varepsilon)$ vertices of $G$ does not satisfy ${\cal P}$ with probability at least $2/3$. We say that ${\cal P}$ is {\em easily testable} if $f_{\cal P}(\varepsilon)$ is polynomial in $\varepsilon^{-1}$. If ${\cal P}$ is not easily testable then it is {\em hard to test}.
\end{definition}

Let us mention two famous results in extremal graph theory which fall into the above framework.
The first is the celebrated triangle removal lemma of Ruzsa and Szemer\'edi \cite{RS},
which is usually stated as saying that if a graph $G$ is $\varepsilon$-far from being triangle free, then $G$ contains at least $n^3/f(\varepsilon)$ triangles. It is easy to see that this statement is equivalent to asserting that the property of being triangle free is testable per Definition \ref{def:test} with a similar bound. The original proof of the triangle removal lemma relied on Szemer\'edi's regularity lemma \cite{Szemeredi78}, which supplied tower-type
upper bounds for $f(\varepsilon)$. Due to its intrinsic interest, as well as its relation to other fundamental combinatorial problems,
a lot of effort was put into improving this tower-type bound. Unfortunately, the best known upper bound, due to Fox \cite{Fox11}, is still a tower-type function. At the other direction, it is known that triangle freeness is not easily testable \cite{RS}, but the corresponding
super polynomial lower bound on $f(\varepsilon)$ is very far from the tower-type upper bound (see e.g. Theorem \ref{thm:one_graph_hard} for a similar bound).

A second classical theorem which falls into the framework of Definition \ref{def:test} is a theorem of R\"odl and Duke \cite{RD}, which states that if $G$ is $\varepsilon$-far from being $3$-colorable then $G$ contains a non $3$-colorable subgraph on $f(\varepsilon)$ vertices. Actually, a close inspection of the proof in \cite{RD} reveals that it in fact shows that $3$-colorability is testable. Just as in the case of the triangle removal lemma
discussed above, the original proof in \cite{RD} relied on the regularity lemma and thus supplied only tower-type upper bounds for
$f(\varepsilon)$. However, the situation of this problem changed dramatically when
Goldreich, Goldwasser and Ron \cite{GGR} obtained a new proof of the R\"odl-Duke theorem, which avoided the use of the regularity lemma
and supplied a polynomial upper bound for $f(\varepsilon)$, thus showing that $3$-colorability is easily testable.
Actually, the authors of \cite{GGR} proved a more general result, showing that every so called ``partition property'' is easily testable.

We now pause for a moment and make two observations regarding Definition \ref{def:test}.
We first observe that showing that a hereditary property ${\cal P}$ is testable per Definition \ref{def:test} is equivalent
to proving a removal lemma for ${\cal P}$, that is, to proving that if $G$ is $\varepsilon$-far from satisfying ${\cal P}$ then $G$ contains at least $n^h/g_{\cal P}(\varepsilon)$ induced copies of some graph $H \not \in {\cal P}$ on $h \leq h_{\cal P}(\varepsilon)$ vertices. As it turns out, it will be more convenient to work with Definition \ref{def:test}, especially when dealing with hereditary properties that cannot be characterized by a finite number of forbidden induced subgraphs. The second observation, is that the notion of testability from Definition \ref{def:test} has an interesting algorithmic implication. Suppose we want to design an algorithm that will distinguish with some constant probability, say $2/3$, between graphs satisfying ${\cal P}$
and graphs that are $\varepsilon$-far from satisfying it.
An immediate corollary of the fact that a property is testable, is that one can solve the above relaxed decision
problem in time that depends only on $\varepsilon$ and not on the size of the input. Indeed, all the algorithm has to do is sample
$f_{\cal P}(\varepsilon)$ vertices and check if the induced subgraph spanned by these vertices satisfies ${\cal P}$. Such an algorithm
is called a {\em property tester}, hence the name we used in Definition \ref{def:test}.
This notion of testing graph properties was introduced by Goldreich, Goldwasser and Ron \cite{GGR}. Following \cite{GGR}, numerous other property testing algorithms were designed in various other combinatorial settings.

Given the fact that some hereditary properties are testable, and in light of the algorithmic applications mentioned in the previous paragraph,
it is natural to ask which hereditary properties are testable. This question was answered by
Alon and Shapira \cite{hereditary} who proved that in fact every hereditary property is testable.
This result was later reproved by Lov\'asz and Szegedy \cite{LS}, and generalized to the setting of hypergraphs by R\"odl and Schacht \cite{RodlSch} and by Austin and Tao \cite{AT}. Unfortunately, since all these proofs relied on some form of Szemer\'edi's regularity lemma \cite{Szemeredi78}, the bounds involved are of tower-type. It was also shown in \cite{monotone} that there are cases where bounds of this type are unavoidable. However, the examples involved rely on ad-hoc constructions of families of forbidden subgraphs.

It is thus natural to ask which hereditary graph properties are easily testable, or at least which ``natural''
hereditary properties are easily testable. In other words, for which properties can we prove a removal lemma while avoiding the use of
the regularity lemma. This problem was raised in 2005 by Goldreich \cite{Goldreich} and recently also by Alon and Fox \cite{AF}.
Our main results in this paper address this problem by giving very simple yet general combinatorial sufficient and necessary conditions
for a hereditary property to be easily testable. In particular, we obtain polynomially bounded removal lemmas for many natural graph properties
for which it was not previously known how to obtain a removal lemma without using the regularity lemma.

\subsection{The case of finitely many forbidden subgraphs}\label{subsec:finite}

From this point on, it will be more natural to think of a hereditary property in terms of its forbidden subgraphs.
Given a family of graphs ${\cal F}$, let ${\cal P}^*_{\cal F}$ be the property of
being induced ${\cal F}$-free, i.e. not containing an induced copy of each
of the graphs of ${\cal F}$.
When ${\cal F}$ consists of a single graph $F$ we will use the notation ${\cal P}^*_{F}$.
Note that the family of properties ${\cal P}^*_{\cal F}$ is precisely the family of hereditary properties.

In this subsection we describe our new results concerning hereditary properties which
can be characterized by forbidding a finite number of induced subgraphs, that is, the properties ${\cal P}^*_{\cal F}$
with $\F$ being a finite set.
We will describe both a sufficient and a necessary condition that
a finite family of graphs needs to satisfy in order to guarantee that ${\cal P}^*_{\cal F}$ is easily testable, starting with the former.

We say that a graph $F$ is {\em co-bipartite} if $V(F)$ can be partitioned into
two cliques, and say that $F$ is a {\em split} graph if $V(F)$ can be partitioned into two sets, one spanning
a clique and the other spanning an independent set. Our main positive result regarding finite families is the following
simple combinatorial condition, guaranteeing that ${\cal P}^*_{\cal F}$ is easily testable.

\begin{theorem}\label{thm:easy_finite}
If $\F$ is a finite family of graphs that contains a bipartite graph, a co-bipartite graph and a split graph then
${\cal P}^*_{\cal F}$ is easily testable.

\end{theorem}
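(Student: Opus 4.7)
The plan is to prove the equivalent polynomial induced removal statement: if $G$ is $\varepsilon$-far from $\mathcal{P}^*_\F$, then $G$ contains at least $\varepsilon^{O(1)} n^h$ induced copies of some $F \in \F$, where $h = \max_{F \in \F}|V(F)|$. Let $B, C, S \in \F$ denote the guaranteed bipartite, co-bipartite, and split members. A random sample of $\varepsilon^{-O(1)}$ vertices then reveals such an induced copy with constant probability, yielding easy testability in the sense of Definition \ref{def:test}.

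The key observation is that $B$, $C$, and $S$ jointly cover all ``density regimes'' that a pair of vertex subsets can exhibit. Specifically, I would build on the polynomial bipartite induced removal lemma (derivable, for the relevant patterns, from polynomial bounds for $K_{t,t}$) in three variants: inducing the bipartite graph $B$ between a pair of independent sets; inducing the co-bipartite graph $C$ between a pair of cliques (via the complement); and inducing the split graph $S$ between an independent set and a clique. Each variant yields $\varepsilon^{O(1)} n^h$ induced copies \emph{provided} we can exhibit polynomially-large vertex pairs $(X,Y)$ with the correct homogeneity type, between which the bipartite graph is $\varepsilon^{O(1)}$-far from being $B$-, $C$-, or $S$-free in the appropriate bipartite sense. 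My strategy would then be a structural decomposition: iteratively partition $V(G)$ into cliques and independent sets via a Ramsey-style procedure, and study bipartite interactions between blocks. Either some interaction witnesses the $\varepsilon$-farness in the above sense (in which case the appropriate bipartite removal closes the argument), or all interactions are essentially homogeneous, in which case one can edit $G$ into $\mathcal{P}^*_\F$ with at most $\varepsilon n^2$ changes, contradicting the $\varepsilon$-far hypothesis.

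The main obstacle is the structural decomposition step: producing polynomial-sized homogeneous blocks (a naive Ramsey argument gives only logarithmic-sized ones) together with a polynomial---rather than tower-type---number of blocks. This is precisely the point where Szemer\'edi's regularity lemma is normally invoked, producing the tower bounds one sees in the original proofs of the hereditary removal lemma. To avoid this, the proof must exploit the combined $B$/$C$/$S$-freeness hypothesis (in the contrapositive direction) to obtain K\H{o}v\'ari--S\'os--Tur\'an-type restrictions on bipartite interactions, which should enable a conditional regularity argument with polynomial bounds. Handling all three types together is crucial: each alone is insufficient, since there exist bipartite $B$ (for instance $C_4$) for which $\mathcal{P}^*_B$ by itself is not easily testable, so the entire force of the theorem resides in the interaction of the three conditions.
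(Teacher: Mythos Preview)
Your outline correctly identifies the central difficulty---avoiding tower-type bounds---and correctly points toward a conditional regularity lemma (of Alon--Fischer--Newman type) as the remedy. But the proposal stops short of the one concrete idea that makes this work, and the order of operations you suggest is inverted in a way that creates a real gap.

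The missing ingredient is what the paper calls a \emph{bipartite obstruction}: a single bipartite graph $H=(S\cup T,E)$ with the property that \emph{every} completion of $H$ (i.e.\ every way of filling in edges inside $S$ and inside $T$) contains an induced copy of one of $B$, $C$, or $S$. The existence of such an $H$ is a Ramsey-type lemma proved probabilistically. Once you have $H$, you don't need three separate bipartite removal variants at all: either $G$ contains $(\gamma\delta)^{O(1)}n^{2k}$ induced bipartite copies of $H$---and then a sample already fails $\mathcal{P}^*_\F$, regardless of what the sample induces on the two sides---or it doesn't, and that absence is exactly the hypothesis the Alon--Fischer--Newman lemma needs to produce a $\poly(1/\varepsilon)$-size equipartition with $\delta$-homogeneous pairs. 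Your plan to ``use $B/C/S$-freeness in the contrapositive to obtain K\H{o}v\'ari--S\'os--Tur\'an-type restrictions'' is gesturing at this, but never names the object on which AFN conditions.

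Your proposed order---first extract clique/independent blocks via Ramsey, then study bipartite interactions---does not work, as you yourself note (Ramsey gives logarithmic blocks). The paper reverses this: first apply conditional regularity to get polynomially many parts with homogeneous \emph{pairs}; then, \emph{inside each part}, apply the conditional regularity machinery a second time (again conditioned on the same $H$) to extract $m=\max_F v(F)$ subsets that are pairwise all-dense or all-sparse. Now clean up $G$ accordingly; since $G$ is $\varepsilon$-far, the cleaned graph contains an induced $F\in\F$, and the homogeneous subsets supply the dense/sparse pairs needed for a counting lemma to recover $\varepsilon^{-O(1)}n^{v(F)}$ induced copies in $G$ itself. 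The ``three variants'' never appear; the bipartite obstruction absorbs all three cases at once. (Minor point: the status of $\mathcal{P}^*_{C_4}$ was open, not known to be hard, so your parenthetical example is not quite right.)
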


We now mention some immediate applications of Theorem \ref{thm:easy_finite}, starting with known results
that follow as special cases of Theorem \ref{thm:easy_finite}. Let $P_k$ denote the path on $k$ vertices.
Alon and Shapira \cite{induced} proved that ${\cal P}^*_{P_3}$ is easily testable by relying on the fact that a graph satisfies ${\cal P}^*_{P_3}$
if and only if it is a disjoint union of cliques. Observing that $P_3$ is bipartite, co-bipartite and split, Theorem \ref{thm:easy_finite} gives the same result. In the same paper \cite{induced}, it was shown that
for any $F$ other than $P_2,P_3,P_4,C_4$ and their complements, the property ${\cal P}^*_{F}$ is not easily testable. The two cases that were left
open were ${\cal P}^*_{P_4}$ and ${\cal P}^*_{C_4}$. The case of ${\cal P}^*_{P_4}$ was settled only very recently by Alon and Fox \cite{AF}
who used the structural characterization of induced $P_4$-free graphs in order to show that ${\cal P}^*_{P_4}$ is easily testable. As
in the case of $P_3$, since $P_4$ is bipartite, co-bipartite and split, Theorem \ref{thm:easy_finite} gives the result of Alon and Fox \cite{AF} as a special case. Finally, a famous theorem of Alon \cite{subgraphs} states that the property of being (not necessarily induced) $F$-free is easily testable if and only if $F$ is bipartite. It is easy to see that the `if part' of this theorem follows immediately from Theorem \ref{thm:easy_finite}. Indeed, this follows from the simple observation
that being $F$-free is equivalent to satisfying ${\cal P}^*_{\cal F}$, where ${\cal F}$ consists of all supergraphs of $F$ on $|V(F)|$ vertices.


Let us turn to derive some new testability results from Theorem \ref{thm:easy_finite}.
It is well known that the property of being a {\em line graph} is equivalent to ${\cal P}^*_{\F}$, where $\F$
is a family of $9$ graphs, each having at most $6$ vertices (see \cite{Harary}). One of these graphs is $K_{1,3}$, which is both bipartite and split, and another one is a complete graph on $5$ vertices minus a single edge, which is co-bipartite.
Hence, Theorem \ref{thm:easy_finite} implies that the property of being a line graph is easily testable.
Two other graph properties which can be shown to be easily testable via Theorem \ref{thm:easy_finite} are being a {\em threshold graph} and a {\em trivially perfect graph}. Since both properties are equivalent to ${\cal P}^*_{\F}$ for an appropriate finite $\F$, where in both cases $P_4 \in \F$ (see \cite{Golumbic, triv_perfect}), we immediately deduce from Theorem \ref{thm:easy_finite} that both are easily testable.

We now turn to describe our necessary condition for being easily testable. Recall that our sufficient condition from Theorem \ref{thm:easy_finite} asks $\F$ to contain a bipartite graph, a co-bipartite graph and a split graph. The next theorem shows that having at least one bipartite graph and at least one co-bipartite graph is a necessary condition.

\begin{theorem}\label{thm:finite_family}
Let $\F$ be a finite family for which ${\cal P}^*_{\F}$ is easily testable. Then
$\F$ contains a bipartite graph and a co-bipartite graph.
\end{theorem}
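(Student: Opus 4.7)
The plan is to prove the contrapositive, and I split the statement into its two halves. First, I reduce the co-bipartite claim to the bipartite one by complementation. Writing $\bar{\F}=\{\bar F:F\in\F\}$, the map $G\mapsto\bar G$ preserves edit distance, sends induced copies of $F$ bijectively to induced copies of $\bar F$, and commutes with random sampling. Hence ${\cal P}^*_\F$ is easily testable if and only if ${\cal P}^*_{\bar{\F}}$ is, and since $F$ is co-bipartite if and only if $\bar F$ is bipartite, the co-bipartite assertion for $\F$ follows from the bipartite assertion applied to $\bar{\F}$. So I only need to show: if every $F\in\F$ is non-bipartite, then ${\cal P}^*_\F$ is not easily testable.

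To do this, I would construct, for arbitrarily small $\varepsilon>0$, a graph $G$ on $n$ vertices that is $\varepsilon$-far from ${\cal P}^*_\F$ but for which a uniformly random set of $s=(1/\varepsilon)^C$ vertices induces a copy of some $F\in\F$ with probability $o(1)$, for every fixed constant $C$. The construction is of Ruzsa--Szemer\'edi/Behrend type, mirroring the one underlying Theorem~\ref{thm:one_graph_hard}. Fix any $F_0\in\F$ together with a proper coloring $c\colon V(F_0)\to[\chi(F_0)]$, take a Behrend set $A\subseteq[N]$ with $|A|=N/\exp(O(\sqrt{\log N}))$ and no non-trivial three-term AP, and build $G$ on $V(F_0)\times[N]$ by joining $(u,i)$ to $(v,j)$ precisely when $\{u,v\}\in E(F_0)$ and $j-i\in (c(v)-c(u))\cdot A$. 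The no-three-term-AP property of $A$ forces the induced copies of $F_0$ in $G$ to arise only from the ``trivial'' patterns $j_u=i+c(u)\cdot a$ ($a\in A$, $i\in[N]$), yielding an essentially edge-disjoint family of $\Theta(N|A|)$ copies; this makes $G$ a $\Theta(|A|/n)$-far graph from ${\cal P}^*_{F_0}$. Using the trivial inclusion ${\cal P}^*_\F\subseteq{\cal P}^*_{F_0}$, the same graph $G$ is $\varepsilon$-far from ${\cal P}^*_\F$ for $\varepsilon=|A|/n$, which is super-polynomially small in $n$.

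The main remaining, and hardest, step is to bound the number of induced copies in $G$ of every other $F\in\F$ besides $F_0$. The construction is tailored to $F_0$, and for another non-bipartite $F\in\F$ the induced copies of $F$ could in principle appear in many unintended ways inside the blow-up structure. The argument should exploit the fact that outside of the ``arithmetic patches'' determined by $A$, the graph $G$ is essentially a bipartite blow-up (one can think of it as a blow-up of the bipartite double cover of $F_0$), so every induced copy of a non-bipartite $F$ must use at least one edge from one of the $O(N|A|)$ sparse patches; a direct counting argument then bounds the induced count of $F$ by $O(n^{|V(F)|-1}|A|)$, which remains super-polynomially small in $1/\varepsilon$. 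A union bound over the finitely many $F\in\F$ finally shows that a uniformly random $\poly(1/\varepsilon)$-subset of $V(G)$ is induced-$\F$-free with probability at least $2/3$, contradicting easy testability. The principal obstacle is exactly this simultaneous control over all of $\F$: one must choose the proper coloring $c$ and the signed arithmetic pattern $(c(v)-c(u))\cdot A$ carefully so that the ``background'' of $G$ is genuinely bipartite and no non-bipartite induced structure can appear outside the Behrend patches, after which the union bound over the finite family $\F$ closes the argument.
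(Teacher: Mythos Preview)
Your complementation reduction is fine and matches the paper's ``by symmetry'' remark. The real content is the non-bipartite case, and there your argument has a genuine gap precisely at the step you flag as the hardest.

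First, the quantitative claim is wrong. With $\varepsilon\asymp |A|/n$ you have $|A|\asymp \varepsilon n$, so your asserted bound $O(n^{|V(F)|-1}|A|)=O(\varepsilon\, n^{|V(F)|})$ gives a \emph{linear} density of induced copies of $F$, not a super-polynomial one. That is nowhere near enough to beat a $\poly(1/\varepsilon)$ sample. Second, the ``bipartite background'' picture is not accurate: in your graph every single edge already lies in one of the arithmetic patches (that is the definition), so there is no bipartite skeleton left once you remove them; the heuristic that a non-bipartite $F$ must use ``at least one patch edge'' is vacuous and cannot produce the saving you need. Finally, even for $F_0$ itself, a set free only of $3$-term APs does not in general force copies of $F_0$ to be trivial; one needs the stronger Behrend-type set that avoids all short convex equations (this is exactly Lemma~\ref{lem:Behrend} in the paper).

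The paper resolves the simultaneous control over all $F\in\F$ by an idea your proposal does not have: it works with graph \emph{cores} and chooses $K=K(\F)$ to be a \emph{maximal} element (under the homomorphism order) among the cores $\{C(F):F\in\F\}$. The Ruzsa--Szemer\'edi-type graph $G$ is built so that $G$ is homomorphic to $K$ and contains only $\varepsilon^{c\log(1/\varepsilon)}n^{|V(K)|}$ copies of $K$ (Theorem~\ref{thm:aux_hard}, using the convex-equation-free Behrend set of Lemma~\ref{lem:Behrend}). The point is that for \emph{every} $F\in\F$, any embedding $F\hookrightarrow G$ composed with the homomorphism $G\to K$ is a homomorphism $F\to K$; by maximality of $K$ in the core poset (Proposition~\ref{prop:core}), this homomorphism restricts to an isomorphism on some subset of $V(F)$, so the image of $F$ in $G$ contains a genuine copy of $K$. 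Hence the number of copies of each $F\in\F$ is bounded by the number of copies of $K$ times $n^{|V(F)|-|V(K)|}$, which is $\varepsilon^{c\log(1/\varepsilon)}n^{|V(F)|}$. This is the missing mechanism: you must not pick an arbitrary $F_0$, but rather the graph whose core dominates all others, and then the single Behrend-type count for $K$ controls the whole family at once.
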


As we mentioned above, Alon \cite{subgraphs} proved that being $F$-free
is easily testable if and only if $F$ is bipartite. It is now easy to see that the `only if' part of Alon's result
follows from Theorem \ref{thm:finite_family}.
As we mentioned above, Alon and Shapira \cite{induced} proved that ${\cal P}^*_{F}$ is not easily testable for every
$F$ other than $P_2,P_3,P_4,C_4$ and their complements. Again, this result follows as a special case of Theorem \ref{thm:finite_family}.

Having given both a necessary and a sufficient condition, it is natural to ask if one of them in fact characterizes the finite families $\F$ for which
${\cal P}^*_{\F}$ is easily testable. Unfortunately, none do. It is known that being a split graph is equivalent to ${\cal P}^*_{\F}$ where
$\F=\{C_5,C_4,\overline{C_4}\}$ (see \cite{Golumbic}). While $\F$ does not satisfy the condition of Theorem \ref{thm:easy_finite} (it does not contain a split graph), the property
of being a split graph is easily testable since it is one of the partition properties that were shown to be easily testable in \cite{GGR}. Therefore, the sufficient condition in Theorem
\ref{thm:easy_finite} is not necessary. Showing that the necessary condition of Theorem \ref{thm:finite_family} is not sufficient is a bit harder,
and is stated in the following theorem.

\begin{theorem}\label{thm:hard_bp_co-bp}
There is a bipartite $F_1$ and a co-bipartite $F_2$ such that ${\cal P}^*_{\{F_1,F_2\}}$ is not easily testable.
\end{theorem}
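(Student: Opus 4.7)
I would take $F_1 = C_6$ and $F_2 = K_3 \cup K_2$, the disjoint union of a triangle and an edge. Here $F_1$ is bipartite; $F_2$ is co-bipartite, via the partition of $V(F_2)$ into the two cliques $V(K_3)$ and $V(K_2)$; and neither $F_1$ nor $F_2$ is a split graph, since both contain an induced $2K_2$. So $\{F_1,F_2\}$ has a bipartite and a co-bipartite member (matching the necessary condition of Theorem~\ref{thm:finite_family}), but no split member, so that Theorem~\ref{thm:easy_finite} does not apply and there is no a~priori obstacle to hardness.

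\textbf{Bipartite Behrend-type construction.} The plan is to build, for each sufficiently small $\varepsilon>0$, a bipartite graph $G_n$ on $n$ vertices which is $\varepsilon$-far from induced $C_6$-free and contains at most $n^6/\gamma(\varepsilon^{-1})$ induced copies of $C_6$, for some super-polynomial $\gamma$. This is the bipartite analogue of the Behrend / Ruzsa--Szemer\'edi construction used by Alon--Shapira~\cite{induced} to prove that ${\cal P}^*_F$ is not easily testable for a generic bipartite $F$. Concretely, starting from a set $S\subseteq[N]$ of size $N/2^{O(\sqrt{\log N})}$ with no non-trivial $3$-term arithmetic progression, one places the vertices of $G_n$ on two equal sides $A\cup B$ and defines the bipartite edge set using coordinate differences lying in $S$, so that each induced $C_6$ of $G_n$ encodes a $3$-AP in $S$, while the overall edge density is still large enough to force the $\varepsilon n^2$ edit-distance lower bound.

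\textbf{Deducing non-testability.} Given such a $G_n$, the rest is routine. Since $G_n$ is bipartite it is triangle-free, and hence contains no induced copy of $F_2=K_3\cup K_2$. Since ${\cal P}^*_{\{F_1,F_2\}} \subseteq {\cal P}^*_{F_1}$, the distance from $G_n$ to ${\cal P}^*_{\{F_1,F_2\}}$ is at least its distance to ${\cal P}^*_{F_1}$, which is $\geq \varepsilon n^2$ by construction. On the other hand, a uniformly random subset $R$ of $s=\poly(\varepsilon^{-1})$ vertices is again bipartite, hence automatically $F_2$-induced-free; and by linearity of expectation together with Markov's inequality, the probability that $R$ contains any induced $C_6$ is at most $s^6/\gamma(\varepsilon^{-1})=o(1)$, since $\gamma$ is super-polynomial. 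Hence $R$ satisfies ${\cal P}^*_{\{F_1,F_2\}}$ with probability $>2/3$, contradicting easy testability.

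\textbf{Main obstacle.} The real work lies in the construction of $G_n$. One must simultaneously (a) force every induced $C_6$ in $G_n$ to correspond to a $3$-AP in the Behrend set $S$, making them super-polynomially rare relative to $n^6$, and (b) ensure that every edge of $G_n$ participates in many induced $C_6$'s so that deleting $o(\varepsilon n^2)$ edges cannot kill all of them. Carrying this out inside a \emph{bipartite} host graph (which is what lets $F_2$ be avoided for free) is restrictive; the key technical step is verifying that the copies of $C_6$ produced by the bipartite Behrend gadget are genuinely \emph{induced}, and not merely homomorphic, while the far-from-free property survives.
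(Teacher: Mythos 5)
Your reduction is sound in outline: taking $F_1=C_6$ and $F_2=K_3\cup K_2$, any triangle-free graph is automatically induced $F_2$-free, so everything reduces to exhibiting a \emph{bipartite} $G_n$ that is $\varepsilon$-far from induced $C_6$-free while containing only $n^6/\gamma(\varepsilon^{-1})$ induced $C_6$'s for some super-polynomial $\gamma$. The gap is that you have not produced such a $G_n$, and the gadget you sketch does not do the job. The issue is not the one you flag (whether the planted cyclic $C_6$'s are induced — they are, if you blow up a $6$-cycle of parts and join only consecutive parts), but the \emph{unplanted} induced $C_6$'s. In a bipartite blow-up of a cyclic arrangement $V_1,\dots,V_6$ with a sparse Behrend-type bipartite graph between consecutive parts, an induced $C_6$ can sit entirely inside two adjacent parts, say $V_1\cup V_2$; such a copy is recorded by an induced $C_6$ in the difference graph $\{(x,x+s):s\in S\}$, which amounts to a solution of $s_1-s_2+s_3-s_4+s_5=s_6$ with all $s_i\in S$, together with a few non-membership conditions. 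That equation is \emph{not} convex — it has negative coefficients — so Behrend-type sets (such as those supplied by Lemma~\ref{lem:Behrend}) give no control over its solution count, and generically the density of these degenerate copies is only $\poly(\varepsilon)$, which a $\poly(1/\varepsilon)$-sample tester will detect with constant probability. Copies spanning three or more parts are similarly uncontrolled, and there is no bipartite analogue of Lemma~\ref{lem:copies} to rule them out, since the proof of that lemma hinges on the cliques placed inside alternate parts.

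This is exactly the obstacle the paper's different choice of $(F_1,F_2)$ is engineered to avoid. It takes $F_1=C_8$ and $F_2=M$, the complement on seven vertices of three disjoint edges plus an isolated vertex. The graph $M$ is co-bipartite, has independence number $2$, and has the property that every five of its vertices induce a $P_3$; consequently, a graph whose vertex set splits into a bounded number of cliques together with a bounded number of independent sets is automatically induced $M$-free, with no triangle-freeness required. The paper therefore places \emph{cliques} on the odd-index parts of its $C_8$-blow-up, and those cliques are precisely what drive the case analysis in Lemma~\ref{lem:copies}: they force every induced $C_8$ to traverse the eight parts cyclically, so the count of induced copies reduces to the cyclic configurations controlled by the convex-equation-free set of Lemma~\ref{lem:Behrend}. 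Choosing $F_2=K_3\cup K_2$ forces the host graph to be triangle-free and thereby forfeits the cliques — and with them the one available mechanism for eliminating the degenerate copies. If you want to keep $F_2=K_3\cup K_2$, you would need to find a genuinely new way to bound the unplanted induced $C_6$'s in a bipartite host, and I do not see how to do that with a Behrend-type set.
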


Thus the above theorem also implies that in Theorem \ref{thm:easy_finite} we cannot drop the requirement that $\F$ should
contain a split graph. The fact that we cannot drop the requirement that $\F$ should
contain a bipartite graph follows from \cite{RS} where it was (implicitly) proved that
triangle-freeness is not easily testable. By symmetry, the same holds for the co-bipartite graph.

We conclude our discussion on the case of finite forbidden families with the following theorem, which turns out
to be the key step in the proof of Theorem \ref{thm:finite_family}. We will comment on the importance of this theorem
in Subsection \ref{subsec:nuggets}.

\begin{theorem}\label{thm:one_graph_hard}
For every $h \geq 3$ there are $\varepsilon_0 = \varepsilon_0(h)$ and $c = c(h)$ such that the following holds for every $\varepsilon < \varepsilon_0$ and for every non-bipartite graph $H$ on $h$ vertices. For every $n \geq n_0(\varepsilon)$ there is a graph on $n$ vertices which is $\varepsilon$-far from being induced $H$-free and yet contains at most $\varepsilon^{c\log(1/\varepsilon)}n^h$ (not necessarily induced) copies of $H$.
\end{theorem}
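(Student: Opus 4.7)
The plan is to mimic the classical Ruzsa--Szemer\'edi--Behrend construction for triangles, generalized from $K_3$ to an arbitrary non-bipartite $H$ by exploiting an odd cycle $C = v_1 v_2 \cdots v_g v_1$ of length $g \le h$ inside $H$. Fix a generic injection $\phi : V(H) \to \mathbb{Z}$, and set $\alpha_i := \phi(v_{i+1}) - \phi(v_i)$ (indices mod $g$), so that $\sum_{i=1}^{g} \alpha_i = 0$. A standard Behrend-type argument produces, for every integer $M$, a set $B \subseteq \mathbb{Z}_M$ of size $|B| \ge M \cdot 2^{-C\sqrt{\log M}}$ (with $C = C(h)$) such that the only solutions to $\sum_{i=1}^{g} \alpha_i b_i = 0$ with each $b_i \in B$ are the diagonal ones $b_1 = \cdots = b_g$. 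Setting $\varepsilon := |B|/M$ gives $\log M = \Theta\!\big(\log^2(1/\varepsilon)\big)$, which is the key conversion that will turn a factor $M^{-(h-2)}$ below into the quasi-polynomial $\varepsilon^{c\log(1/\varepsilon)}$.

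The graph $G$ is then built on $n = hM$ vertices partitioned into parts $V_v = \{v\} \times \mathbb{Z}_M$, one per $v \in V(H)$. For every edge $uv \in E(H)$, I put an edge between $(u,x) \in V_u$ and $(v,y) \in V_v$ precisely when $y - x \equiv (\phi(v) - \phi(u))\, b \pmod{M}$ for some $b \in B$; no other edges are added. For each pair $(x,b) \in \mathbb{Z}_M \times B$, the tuple $\{(v,\, x + \phi(v)\cdot b) : v \in V(H)\}$ spans an \emph{induced} copy of $H$ in $G$: the edges of $H$ are present by construction, and non-edges of $H$ remain non-edges of $G$ since edges are never placed outside $E(H)$. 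A direct check shows that distinct pairs $(x,b)$ yield edge-disjoint induced copies, so $G$ contains at least $|B|\cdot M$ pairwise edge-disjoint induced copies of $H$; becoming induced $H$-free therefore costs at least $|B|\cdot M = \Omega_h(\varepsilon n^2)$ edge modifications (and the construction is extended to arbitrary $n \ge n_0(\varepsilon)$ by padding with isolated vertices or by standard blow-up).

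The principal obstacle --- and where the Behrend property is exploited --- is bounding the \emph{total} number of (not necessarily induced) copies of $H$ in $G$. Given a copy $\iota : V(H) \to V(G)$, write $\iota(v) = (f(v), y(v))$ with $f(v) \in V(H)$ and $y(v) \in \mathbb{Z}_M$. The generic choice of $\phi$ ensures that degenerate copies (those for which $f$ collapses distinct vertices of $H$ into the same part) contribute only a lower-order term; in the dominant case $f = \mathrm{id}$, each edge $uv \in E(H)$ yields some $b_{uv} \in B$ with $y(v) - y(u) \equiv (\phi(v) - \phi(u))\, b_{uv} \pmod{M}$. Summing this around the cycle $C$ gives $\sum_{i=1}^{g} \alpha_i\, b_{v_i v_{i+1}} \equiv 0$, and the Behrend property of $B$ forces $b_{v_i v_{i+1}} = b$ for a common $b \in B$. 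Propagating along spanning paths of $H$ from $C$ to every other vertex (and using further cycle-sum relations to handle additional edges), the entire copy $\iota$ is determined by the pair $(b, y(v_1))$, leaving at most $O_h(|B|\cdot M)$ copies of $H$ in $G$. As a fraction of $n^h$ this is $O_h\!\big(\varepsilon\cdot M^{2-h}\big) \le 2^{-\Omega((h-2)\log M)} \le 2^{-\Omega(\log^2(1/\varepsilon))} \le \varepsilon^{c \log(1/\varepsilon)}$ for a suitable $c = c(h) > 0$. The most delicate step is the degenerate-$f$ case, where one must show that maps $\iota$ factoring through a proper homomorphic image of $H$ contribute only a negligible number of copies --- essentially because such $\iota$ use at most $h-1$ distinct parts and so are constrained by even more Behrend-type equations.
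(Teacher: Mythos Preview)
Your plan follows the same Behrend--Ruzsa--Szemer\'edi paradigm as the paper, but two of the steps you wave through are precisely the ones the paper's argument is designed around, and as written they do not go through.

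First, the assertion that in the $f=\mathrm{id}$ case ``the entire copy $\iota$ is determined by the pair $(b,y(v_1))$'' is false: edges of $H$ not lying on any cycle through $C$ (e.g.\ pendant edges) carry their own $b_{uv}\in B$ which are completely unconstrained, so the count is at best $M\cdot|B|^{\,h-g+1}$, not $O_h(M\cdot|B|)$. This weaker bound still gives the theorem (since $g\ge 3$ furnishes a factor $M^{-(g-2)}$), but the ``further cycle-sum relations'' you invoke to pin down all the $b_{uv}$'s would require $B$ to avoid a different equation for every cycle of $H$, which you have not arranged.

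Second---and this is the real obstacle---your Behrend set is built to kill the \emph{single} equation $\sum_{i}\alpha_i b_i=0$. For a ``generic'' injection $\phi$ and $g\ge 5$ this equation typically has several $\alpha_i$ of each sign, so it is not of the convex form $a_1s_1+\cdots+a_\ell s_\ell=(a_1+\cdots+a_\ell)s_{\ell+1}$ that the sphere construction handles; one needs Ruzsa's theory of genus-$1$ equations, and one needs $\phi$ chosen so that no proper subsum of the $\alpha_i$ vanishes. Worse, a degenerate homomorphism $f\colon H\to H$ sends $C$ to a closed odd walk whose associated coefficients are $\beta_i=\phi(f(v_{i+1}))-\phi(f(v_i))$, not the original $\alpha_i$; your set $B$ need not say anything about $\sum\beta_i b_i=0$, and ``using at most $h-1$ parts'' does not by itself yield any saving. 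The paper sidesteps all of this in one move: it passes to the \emph{core} $K$ of $H$ (so every endomorphism is an automorphism and the degenerate case evaporates), labels $V(H)$ so that the chosen odd cycle of $K$ receives \emph{increasing} labels (forcing every relevant cycle equation to be convex), and takes $B$ to avoid \emph{all} convex equations with bounded coefficients simultaneously. The copy count is then carried out for $K$ rather than $H$, and lifted via $K\subseteq H$.
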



\subsection{The case of infinitely many forbidden subgraphs}\label{subsec:infinite}

We now turn to consider properties ${\cal P}^*_{\F}$ when $\F$ is a (possibly) infinite family.
We start by introducing an important feature of a hereditary graph property.

\begin{definition}\label{def:f_blowup}
Let $F$ be a graph with vertex set $V(F) = \{1,\dots,p\}$ and let
$g : V(F) \rightarrow \{0,1\}$. We say that a graph $G$ is a {\em $g$-blowup} of $F$ if $G$ admits a vertex partition $V(G) = P_1 \cup \dots \cup P_p$ with the following properties.
\begin{enumerate}
\item For every $1 \leq i < j \leq p$, if $(i,j) \in E(F)$ then $(P_i,P_j)$ is a complete bipartite graph, and if $(i,j) \notin E(F)$ then $(P_i,P_j)$ is an empty bipartite graph.
\item For every $1 \leq i \leq p$, if $g(i) = 1$ then $P_i$ is a clique and if $g(i) = 0$ then $P_i$ is an independent set.
\end{enumerate}
\end{definition}

\begin{definition}\label{def:blowup_property}
We say that a graph property ${\cal P}$ has the {\em blowup quality} if for every graph $F$ which satisfies ${\cal P}$ there is a function
$g : V(F) \rightarrow \{0,1\}$ such that every $g$-blowup of $F$ satisfies ${\cal P}$.
\end{definition}

Our main result regarding hereditary properties characterized by an infinite family of forbidden subgraphs $\F$ is the following.

\begin{theorem}\label{thm:easy_blowup}
Let $\F$ be a graph family such that
\begin{enumerate}
\item  $\F$ contains a bipartite graph, a co-bipartite graph and a split graph.
\item ${\cal P}^*_{\F}$ has the blowup quality.
\end{enumerate}
Then ${\cal P}^*_{\F}$ is easily testable.
\end{theorem}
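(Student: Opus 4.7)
The approach is to reduce to the finite case (Theorem \ref{thm:easy_finite}) by restricting $\F$ to graphs of bounded size. For a parameter $r = \poly(1/\varepsilon)$ to be chosen, let $\F_r := \{F \in \F : |V(F)| \leq r\}$. Since $\F$ contains a bipartite, a co-bipartite, and a split graph, each of fixed size independent of $\varepsilon$, so does $\F_r$ for all sufficiently large $r$. Hence Theorem \ref{thm:easy_finite} applies to $\F_r$, yielding that ${\cal P}^*_{\F_r}$ is easily testable with sample complexity $q(\varepsilon) = \poly(1/\varepsilon)$. The theorem then reduces to the following claim: if $G$ is $\varepsilon$-far from ${\cal P}^*_{\F}$, then $G$ is $(\varepsilon/2)$-far from ${\cal P}^*_{\F_r}$. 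Given this, a sample of $q(\varepsilon/2)$ vertices of $G$ contains an induced copy of some $F \in \F_r \subseteq \F$ with probability $\geq 2/3$, which witnesses failure of ${\cal P}^*_{\F}$.

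To prove the claim contrapositively, I suppose that $G$ is $(\varepsilon/2)$-close to some $G' \in {\cal P}^*_{\F_r}$ and construct $G^* \in {\cal P}^*_{\F}$ which is $(\varepsilon/2)$-close to $G'$. The construction relies on a structural decomposition naturally arising from the proof of Theorem \ref{thm:easy_finite}: avoidance of a fixed bipartite, co-bipartite, and split graph forces a balanced ``blowup'' structure on $G'$. Concretely, there is a template $F^*$ on $s \leq r$ vertices, a labeling $g : V(F^*) \to \{0,1\}$, and a balanced partition $V(G') = V_1 \cup \dots \cup V_s$ such that modifying at most $(\varepsilon/4) n^2$ edges of $G'$ turns it into a $g$-blowup of $F^*$. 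Since the pairs $(V_i,V_j)$ and the internal structure of each $V_i$ are already ``mostly'' homogeneous in $G'$ itself, most choices of representatives $v_i \in V_i$ yield an induced copy of $F^*$ in $G'$, giving $F^* \in {\cal P}^*_{\F_r}$. As $|V(F^*)| \leq r$, no graph in $\F$ of size greater than $r$ can be an induced subgraph of $F^*$, so $F^* \in {\cal P}^*_{\F_r}$ in fact implies $F^* \in {\cal P}^*_{\F}$.

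Now the blowup quality applies: there exists $g^* : V(F^*) \to \{0,1\}$ such that every $g^*$-blowup of $F^*$ lies in ${\cal P}^*_{\F}$. Converting the $g$-blowup structure of (the modified) $G'$ into a $g^*$-blowup requires flipping only the edges internal to those classes $V_i$ where $g(i) \neq g^*(i)$, at a cost of at most $s \cdot \binom{n/s}{2} \leq n^2/(2s) \leq (\varepsilon/4) n^2$ edges when $s \geq 2/\varepsilon$. The resulting graph $G^*$ is a $g^*$-blowup of $F^*$, hence in ${\cal P}^*_{\F}$, and $(\varepsilon/2)$-close to $G'$, as required. I expect the main technical obstacle to be establishing the balanced blowup decomposition with $s = \poly(1/\varepsilon)$ parts; this quantitative Ramsey-type structural result---polynomial bounds rather than tower-type bounds coming from Szemer\'edi-style regularity---is precisely what underlies Theorem \ref{thm:easy_finite} and must be extracted or re-proved here.
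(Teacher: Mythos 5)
The reduction to the finite case via $\F_r := \{F \in \F : |V(F)| \le r\}$ with $r = \poly(1/\varepsilon)$ contains a fatal gap: the bound supplied by Theorem \ref{thm:easy_finite} is \emph{not uniform in the family} $\F$. In that proof, the sample complexity scales exponentially in $m := \max_{F\in\F} v(F)$ --- for instance $\gamma$ is set to $\frac{1}{2m^2}2^{-8Cm}$ and the final sample size is of the form $(2^{\Theta(m)}/\gamma\delta)^{\Theta(km)}$, so the ``constant'' $c = c(\F)$ in the bound $\varepsilon^{-c}$ grows at least linearly with $m$. Your argument requires $r \ge 2/\varepsilon$ (so that the internal modifications cost $\le (\varepsilon/4)n^2$), which forces $m \ge 2/\varepsilon$ for the family $\F_r$, and then the resulting sample complexity $q(\varepsilon/2) \ge (\varepsilon/2)^{-\Omega(1/\varepsilon)}$ is super-polynomial. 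So even granting the structural claim about $G'$, the last line of the reduction --- ``Theorem \ref{thm:easy_finite} applies to $\F_r$, yielding $q(\varepsilon) = \poly(1/\varepsilon)$'' --- does not hold.

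This dependence on $m$ in the finite case is not incidental: it enters through Lemma \ref{lem:almost_hom_pairs}, which must simultaneously homogenize $m$ disjoint subsets inside a single part in order to decide whether to round that part to a clique or to an independent set. The whole point of the blowup quality (condition 2) is to make this step unnecessary: once the reduced graph $F$ is defined on the $\poly(1/\varepsilon)$ parts, the function $g:V(F)\to\{0,1\}$ given by the blowup quality tells you the rounding directly, with no dependence on the sizes of graphs in $\F$. That is why the paper's proof applies Lemma \ref{lem:strong_reg} once to $G$, defines $F$ from the reduced densities, and then derives a contradiction from the assumption $F\in{\cal P}^*_{\F}$ by converting $G$ into a $g$-blowup of $F$ at cost $< \varepsilon n^2$; only constants depending on the obstruction $H$ (not on $m$) enter the final bound. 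Your proposal implicitly mirrors this last step (the conversion from $g$ to $g^*$), but wrapping it inside a black-box invocation of the finite-family theorem reintroduces exactly the dependence the blowup quality is designed to remove. A fixable version of your plan would have to re-prove the structural decomposition directly (without Lemma \ref{lem:almost_hom_pairs}) and use the blowup quality inline, at which point you have essentially reproduced the paper's proof rather than reduced to Theorem \ref{thm:easy_finite}.
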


We now describe what we consider the most important result of this paper.
Let us recall the definition of {\em semi-algebraic graph properties}.
A semi-algebraic graph property ${\cal P}$ is given by an integer $k \geq 1$, a set of real $2k$-variate polynomials $f_1,\dots,f_t \in \mathbb{R}[x_1,\dots,x_{2k}]$ and a Boolean function
$\Phi : \{\text{true},\text{false}\}^t \rightarrow \{\text{true},\text{false}\}$.
A graph $G$ satisfies the property ${\cal P}$ if one can assign a point $p_v \in \mathbb{R}^k$ to each vertex $v \in V(G)$ in such a way that a pair of vertices $u,v$ are adjacent if and only if
\begin{equation*}
\Phi\Big( f_1(p_u,p_v) \geq 0; \dots ; f_t(p_u,p_v) \geq 0 \Big) = \text{true}.
\end{equation*}
In the expression $f_i(p_u,p_v)$, we substitute $p_u$ into the first $k$ variables of $f_i$ and $p_v$ into the last $k$ variables of $f_i$.
In what follows, we call the points $p_v$ {\em witnesses}\footnote{Note that a graph $G$ might have many sets of points witnessing the fact that it satisfies ${\cal P}$.} to the fact that $G$ satisfies ${\cal P}$.

Some examples of semi-algebraic graph properties are those that correspond to being an intersection graph of certain semi-algebraic sets in $\mathbb{R}^k$. For example, a graph is an {\em interval graph} if one can assign an interval in
$\mathbb{R}$ to each vertex so that $u,v$ are adjacent iff their intervals intersect. Similarly, a graph is a {\em unit disc graph}
if it is the intersection graph of unit discs in $\mathbb{R}^2$. The family of semi-algebraic graph properties has been extensively studied
by many researchers, see e.g. \cite{FPS} and its references. Alon \cite{AlPrivate} conjectured that every semi-algebraic graph property is easily testable.
As we now show, this conjecture can be easily derived from Theorem \ref{thm:easy_blowup}.

\begin{theorem}\label{thm:easy_semi_algebraic}
Every semi-algebraic graph property is easily testable.
\end{theorem}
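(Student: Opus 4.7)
My plan is to derive Theorem \ref{thm:easy_semi_algebraic} from Theorem \ref{thm:easy_blowup} by showing that every semi-algebraic property $\mathcal{P}$ satisfies both hypotheses of that theorem. First, if $\mathcal{P}$ is the trivial property in which every graph satisfies $\mathcal{P}$, then $\mathcal{P}$ is trivially easily testable (no sampling needed). Otherwise I would take $\mathcal{F}$ to be the family of all graphs outside $\mathcal{P}$, so that $\mathcal{P} = \mathcal{P}^*_{\mathcal{F}}$, and verify the two hypotheses.

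For the blowup quality, given any $F \in \mathcal{P}$ with witness points $q_1, \dots, q_p \in \mathbb{R}^k$, I would define $g : V(F) \to \{0,1\}$ by $g(v) = 1$ if $\Phi\bigl(f_1(q_v, q_v) \geq 0, \dots, f_t(q_v, q_v) \geq 0\bigr)$ evaluates to true, and $g(v) = 0$ otherwise. Any $g$-blowup of $F$ is then witnessed by assigning every vertex of the blowup class $P_v$ the point $q_v$: within-class adjacencies are determined by $\Phi(q_v,q_v)$ and therefore match $g(v)$, while between-class adjacencies are determined by $\Phi(q_u,q_v)$ and therefore match $E(F)$, as required by Definition \ref{def:f_blowup}.

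The harder step is verifying that $\mathcal{F}$ contains a bipartite, a co-bipartite, and a split graph. I would begin with a case analysis on whether certain small graphs belong to $\mathcal{P}$. Each of $K_2$, $\overline{K_2}$, $P_3$, and $\overline{P_3}$ is simultaneously bipartite, co-bipartite, and split, so if any of these four graphs lies outside $\mathcal{P}$ we are done. In the remaining case where all four of them belong to $\mathcal{P}$, I would argue that $\mathcal{P}$ still cannot contain every bipartite graph (and symmetrically cannot contain every co-bipartite or every split graph) while remaining non-trivial: iteratively applying the blowup quality to graphs like $K_{n,n}$, $\overline{K_{n,n}}$, and the split graph $K_n \cup \overline{K_n}$ with complete bipartite connection between the parts should eventually force $\mathcal{P}$ to contain every graph, contradicting non-triviality.

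The main obstacle is precisely this last iterative argument. The blowup quality only guarantees the existence of \emph{some} valid $g$ for a given $F \in \mathcal{P}$, and in the worst case the only valid $g$ for $K_{n,n}$ might be identically zero, producing only larger bipartite graphs rather than new cliques. Overcoming this requires combining the blowup qualities of several graphs in $\mathcal{P}$ at once (for instance, of $K_{n,n}$ together with the witnesses of $K_2$ or $K_3$) and exploiting the concrete form of $\Phi$ and the $f_i$ to extract the forbidden bipartite, co-bipartite, and split graphs. Once condition (1) of Theorem \ref{thm:easy_blowup} is thereby established, Theorem \ref{thm:easy_blowup} immediately yields easy testability of $\mathcal{P}$.
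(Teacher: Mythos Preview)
Your verification of the blowup quality (condition 2 of Theorem \ref{thm:easy_blowup}) is correct and is exactly what the paper does: given witnesses $q_1,\dots,q_p$ for $F$, define $g(v)$ by the truth value of $\Phi$ at $(q_v,q_v)$, and witness any $g$-blowup by repeating points.

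The gap is in condition 1, and you have correctly identified it yourself. Your iterative argument via the blowup quality cannot be completed in the form you sketch. As you note, the blowup quality only promises \emph{some} $g$, and there is no mechanism forcing that $g$ to produce new cliques or independent sets. More concretely, the property of being bipartite has the blowup quality (take $g\equiv 0$), contains every bipartite graph, and is non-trivial; so no argument that uses only non-triviality and the blowup quality can possibly show that $\F$ contains a bipartite graph. Your escape clause ``exploiting the concrete form of $\Phi$ and the $f_i$'' is where all the content would have to live, and you have not said what that exploitation is.

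The paper's route is to invoke the standard fact (a consequence of Warren's theorem on sign patterns) that every semi-algebraic property $\mathcal{P}$ has bounded VC-dimension: there is $d=d(\mathcal{P})$ such that every graph in $\mathcal{P}$ has VC-dimension strictly less than $d$. Take the bipartite graph $H$ with sides $X=\{x_1,\dots,x_d\}$ and $Y=\{y_1,\dots,y_{2^d}\}$ whose bipartite adjacency matrix has all $2^d$ binary $d$-vectors as columns. Any graph on $X\cup Y$ that agrees with $H$ between $X$ and $Y$ has VC-dimension at least $d$, hence lies outside $\mathcal{P}$. Putting empty graphs on both sides gives a bipartite member of $\F$; putting cliques on both sides gives a co-bipartite member; putting a clique on one side and an empty graph on the other gives a split member. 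This single observation replaces your entire iterative scheme and is the missing idea in your proposal.
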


\begin{proof}{\bf (sketch)}
Fix a semi-algebraic graph property ${\cal P}$. Let $\F$ be the family of all graphs which do not satisfy ${\cal P}$. As ${\cal P}$ is a hereditary property we have ${\cal P} = {\cal P}^*_{\F}$. Thus, we only need to show that $\F$ satisfies conditions 1 and 2 in Theorem \ref{thm:easy_blowup}.
The fact that $\F$ satisfies condition 1 of Theorem \ref{thm:easy_blowup} follows directly from the well known fact that every graph satisfying ${\cal P}$
has a bounded VC-dimension. As to condition 2, assume $F$ satisfies ${\cal P}$, and  $\{p_v:v\in V(F)\}$ are points
witnessing this fact. Then setting $g(v)=1$ if and only if
$
\Phi\Big( f_1(p_v,p_v) \geq 0; \dots ; f_t(p_v,p_v) \geq 0 \Big) = \text{true},
$
it is easy to see that every $g$-blowup of $F$ satisfies ${\cal P}$. Indeed, the points witnessing the fact that a $g$-blowup of $F$
satisfies ${\cal P}$ are obtained by taking
each of the points $p_v$ an appropriate number of times.
\end{proof}

The reader can find a more detailed proof of Theorem \ref{thm:easy_semi_algebraic} in Subsection \ref{subsec:semi}.
Returning to the discussion at the beginning of the paper, observe that an immediate corollary of Theorem \ref{thm:easy_semi_algebraic}
is that for every semi-algebraic graph property ${\cal P}$ there is an absolute constant $c$, so that if $G$ is $\varepsilon$-far from satisfying
${\cal P}$, then $G$ contains a subgraph on $\varepsilon^{-c}$ vertices which does not satisfy ${\cal P}$.

Given Theorem \ref{thm:easy_finite}, it is natural to ask if condition $1$ in Theorem \ref{thm:easy_blowup} already guarantees that a property is
easily testable. Actually, an even better reason for believing such a result is the following: as we (implicitly) show later in the paper,
if a hereditary property ${\cal P}$ satisfies condition $1$ of Theorem \ref{thm:easy_blowup} then it has bounded VC dimension\footnote{What we show
(see Lemma \ref{lem:obstruction}) is that condition $1$ implies that every graph $G$ satisfying ${\cal P}$ has no induced copy of some $k \times k$ bipartite graph. It is easy to see that this implies that such a $G$ has VC dimension at most $2k$.}. Thus, stating that condition $1$ in Theorem \ref{thm:easy_blowup}
is a sufficient condition for being easily testable, is equivalent to the (aesthetically pleasing) statement that every hereditary property
of bounded VC dimension is easily testable. As our final theorem shows, this is regretfully not the case.

\begin{theorem}\label{thm:hard_superpoly_blowup}
There is a family of graphs $\F$ that contains a bipartite graph, a co-bipartite graph and a split graph, for which ${\cal P}^*_{\cal F}$ is not easily testable.
\end{theorem}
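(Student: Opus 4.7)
My plan is to leverage Theorem~\ref{thm:hard_bp_co-bp}, which already produces bipartite $F_1$ and co-bipartite $F_2$ with $\mathcal{P}^*_{\{F_1,F_2\}}$ not easily testable. Call $G_n$ the hard instances: graphs on $n$ vertices that are $\varepsilon$-far from being induced $\{F_1,F_2\}$-free yet contain only super-polynomially (in $\varepsilon^{-1}$) few induced copies of each of $F_1$ and $F_2$. Since enlarging a forbidden family only makes the property stricter, $G_n$ is automatically $\varepsilon$-far from $\mathcal{P}^*_{\{F_1,F_2,F_3\}}$ for any split graph $F_3$. It therefore suffices to find a split graph $F_3$ for which $G_n$ also contains super-polynomially few induced copies; $G_n$ would then witness the hardness of $\mathcal{P}^*_{\{F_1,F_2,F_3\}}$.

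To choose $F_3$, I would inspect the structure of $G_n$. Lower-bound constructions of this flavour are typically built from graphs with strong pseudo-random or arithmetic structure (Ruzsa--Szemer\'edi or Behrend-type, possibly blown up in a controlled way), which usually enjoy Ramsey-type bounds on the clique number $\omega(G_n)$ and the independence number $\alpha(G_n)$. If both can be bounded by some absolute constant $s_0$, then any split graph $F_3$ with each part of size larger than $s_0$ --- for instance $F_3 = K_s \cup \overline{K_s}$ for $s > s_0$ --- is never induced in $G_n$, which is certainly enough. If the off-the-shelf construction does not give such Ramsey bounds, one pre-processes $G_n$ by taking a controlled tensor product or blow-up with an auxiliary pseudo-random graph that reduces $\omega$ and $\alpha$, while preserving the $\varepsilon$-farness and the scarcity of induced $F_1$- and $F_2$-copies.

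The main obstacle is exactly this compatibility step: arranging, on a single instance, simultaneous $\varepsilon$-farness from $\mathcal{P}^*_{\{F_1,F_2,F_3\}}$ together with super-polynomially few induced copies of each of $F_1, F_2$, and $F_3$. Suppressing induced copies of the split graph without inflating the counts of $F_1$- or $F_2$-copies is the delicate part; once this modification is carried out, the hardness conclusion for $\mathcal{P}^*_{\{F_1,F_2,F_3\}}$ follows directly, and yields the desired family $\F$ of size three witnessing the theorem.
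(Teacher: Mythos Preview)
Your approach cannot work, for a reason internal to this very paper: Theorem~\ref{thm:easy_finite} states that \emph{every} finite family containing a bipartite graph, a co-bipartite graph and a split graph yields an easily testable property. Hence no family of size three of the form $\{F_1,F_2,F_3\}$ can witness Theorem~\ref{thm:hard_superpoly_blowup}; the family $\F$ must be infinite. This is not a matter of tweaking the construction---it is a structural obstruction.

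Even setting that aside, your concrete plan fails on the specific instances produced in the proof of Theorem~\ref{thm:hard_bp_co-bp}. Those graphs $G_n$ have four of their eight parts $B(V_i)$ spanned as cliques and the other four as independent sets, each of size linear in $n$; so $\omega(G_n)$ and $\alpha(G_n)$ are both $\Theta(n)$, not bounded by any absolute constant $s_0$. Your suggested fix of tensoring with a pseudo-random graph to reduce $\omega$ and $\alpha$ is exactly the kind of operation that destroys the delicate structure (Lemma~\ref{lem:copies}) controlling induced copies of $C_8$ and $M$, and you give no argument that $\varepsilon$-farness and scarcity survive.

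The paper instead takes an infinite family $\F=\{C_6\}\cup\bigcup_{i\ge 1}SG(C_{a_i})$ for a rapidly growing sequence of odd $a_i$, with $C_6$ bipartite and $C_3\in\F$ serving as both the co-bipartite and the split graph. The hard instances are blowups of long odd cycles: such a blowup is $\varepsilon_i$-far from $\mathcal{P}^*_{\F}$ (via Lemma~\ref{lem:blowup_distance}), yet contains no induced member of $\F$ on fewer than $2^{1/\varepsilon_i}$ vertices. The infinitude of $\F$ is essential and is precisely what lets the construction evade Theorem~\ref{thm:easy_finite}.
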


\subsection{Some nuggets about the proofs}\label{subsec:nuggets}

We start with some comments regarding the proofs of Theorems \ref{thm:easy_finite} and \ref{thm:easy_blowup}.
One key observation needed for these proofs is that given a bipartite graph $A_1$,
a co-bipartite graph $A_2$, and a split graph $A_3$, there is a bipartite graph $B$ on vertex sets $X,Y$, so that no matter which graphs one
puts on $X$ and on $Y$, one always gets a graph containing an {\em induced} copy of either $A_1$, $A_2$ or $A_3$ (see Lemma \ref{lem:obstruction}).
This means that if $\F$ satisfies the assertion of Theorem \ref{thm:easy_finite} and $G$ satisfies ${\cal P}^*_{\F}$ then $G$ has
no induced copy\footnote{Actually, $G$ has no induced copy of any graph obtained by adding edges to the two partition classes of $B$.} of some bipartite graph $B$. If this is the case, then one can apply a ``conditional regularity lemma'' of Alon, Fischer
and Newman \cite{bipartite} in order to find a highly structured partition of $G$ (even more structured than the one produced by the
regularity lemma \cite{Szemeredi78}) which is of size only $\poly(1/\varepsilon)$. This is in sharp contrast to the general argument
of \cite{hereditary} that relied on Szemer\'edi's regularity lemma \cite{Szemeredi78} which can only produce partitions of size $\text{Tower}(1/\varepsilon)$.
The proof of Theorem \ref{thm:easy_blowup} is more involved, mainly due to having to handle an infinite number of forbidden subgraphs.
What usually considerably complicates proofs of this type is the need to embed multiple vertices into the same cluster of the partition mentioned above.
The difficulty arises from the fact that clusters of the partition are not highly structured (as opposed to the bipartite graphs between them).
However, when dealing with properties satisfying condition $2$ of Theorem \ref{thm:easy_blowup}, it is enough to embed at most one vertex
into each cluster. This feature is what makes it possible to prove Theorem \ref{thm:easy_blowup}.

As we mentioned above, the construction described in Theorem \ref{thm:one_graph_hard} is the key step in the
proof of Theorem \ref{thm:finite_family}. Let us explain why in Theorem \ref{thm:one_graph_hard} we managed to overcome a difficulty that was not resolved
in previous works. Alon's result \cite{subgraphs} that being $F$-free is not easily testable for non-bipartite $F$ relied on a construction
of a graph that is $\varepsilon$-far from being $F$-free yet contains only $\varepsilon^{c\log(1/\varepsilon)}n^{v(F)}$ copies of $F$.
He further asked for which $F$ the property ${\cal P}^*_{F}$ is easily testable. The reason why the construction in \cite{subgraphs}
did not imply that ${\cal P}^*_{F}$ is hard for every non-bipartite $F$ (or a complement of one) was that it did not produce a graph
that is $\varepsilon$-far from being {\em induced} $F$-free. In fact, in most cases the graph was induced $F$-free.
So what we do in Theorem \ref{thm:one_graph_hard} is reprove the result of \cite{subgraphs} in a way that simultaneously resolves
the open problem raised in that paper. To prove Theorem \ref{thm:one_graph_hard} we too use a construction based on Behrend's \cite{Behrend}
example of a large set of integers $S$ without $3$-term arithmetic progressions, but with the following twist. First, we take a set $S$ that
does not contain a (non-trivial) solution to {\em any} convex\footnote{A linear equation is convex if it is of the form $a_1x_1+\ldots+a_kx_k=(a_1+\ldots+a_k)x_{k+1}$ with all $a_i > 0$.} linear equation with small coefficients. Second, we carefully label the vertices/clusters in this
construction in such a way that any copy of $H$ in the construction will necessarily contain a {\em monotone} cycle, i.e. a cycle whose labels increase in value. This property guarantees that such a cycle corresponds to a solution of a convex linear equation with integers from $S$, but we know that $S$ has no such solution.

\subsection{Organization}

The rest of the paper is organized as follows. In Section \ref{sec:easy} we prove Theorems
\ref{thm:easy_finite} and \ref{thm:easy_blowup}. We also give a more detailed proof of Theorem \ref{thm:easy_semi_algebraic}
in Subsection \ref{subsec:semi}. In Section \ref{sec:hard} we prove Theorems \ref{thm:finite_family}, \ref{thm:hard_bp_co-bp},  \ref{thm:one_graph_hard} and \ref{thm:hard_superpoly_blowup}.

\section{Easily Testable Properties}\label{sec:easy}

In this section we prove Theorems
\ref{thm:easy_finite} and \ref{thm:easy_blowup}.
We start with some preliminary definitions. Let $G$ be a graph on $n$ vertices.
For a set $X \subseteq V(G)$, we denote by $G[X]$ the subgraph of $G$ induced by $X$. Define
$  e(X) = \left| \left\{ (x,y) \in E(G) : x,y \in X \right\} \right| $
and $d(X) = e(X)/\binom{|X|}{2}$. The number $d(X)$ is called the {\em density} of $X$. Notice that $d(X) = 1$ if and only if $X$ is a clique and
$d(X) = 0$ if and only if $X$ is an independent set. We say that $X$ is {\em homogeneous} if either $d(X) = 1$ or $d(X) = 0$ (i.e. $X$ is either a clique or an independent set). For
$\delta \in (0, \frac{1}{2})$, we say that $X$ is {\em $\delta$-homogeneous} if either $d(X) \geq 1 - \delta$ or $d(X) \leq \delta$.

For two disjoint sets $X,Y \subseteq V(G)$ define
$e(X,Y) = \left| \left\{(x,y) \in X \times Y : (x,y) \in E(G)\right\} \right|$
and $d(X,Y) = \frac{e(X,Y)}{|X||Y|}$. The number $d(X,Y)$ is called the
{\em density} of the pair $(X,Y)$. Note that $d(x,y) = 1$ (resp. $d(x,y) = 0$) if and only if
the bipartite graph between $X$ and $Y$ is complete (resp. empty). We say that the pair $(X,Y)$ is {\em homogeneous} if either $d(X,Y) = 1$ or $d(X,Y) = 0$. For $\delta \in (0, \frac{1}{2})$, we say that $(X,Y)$ is {\em $\delta$-homogeneous} if either
$d(X,Y) \geq 1 - \delta$ or $d(X,Y) \leq \delta$. We say that the {\em dominant value} of $(X,Y)$ is $1$ if $d(X,Y) \geq \frac{1}{2}$ and is $0$ if $d(X,Y) < \frac{1}{2}$.
In cases where we consider several graphs at the same time, we write $d_G(X)$ and $d_G(X,Y)$ to refer to the density in $G$. We will use the following trivial claim.
\begin{claim}\label{claim:hom_subsets}
Let $\beta,\gamma \in (0,\frac{1}{2})$,
let $X,Y$ be disjoint vertex-sets and let $X' \subseteq X$, $Y' \subseteq Y$ be such that $|X'| \geq (\gamma/\beta)^{1/2}|X|$ and $|Y'| \geq (\gamma/\beta)^{1/2}|Y|$. If $d(X,Y) \geq 1 - \gamma$ (resp. $d(X,Y) \leq \gamma$) then $d(X',Y') \geq 1 - \beta$ (resp. $d(X',Y') \leq \beta$).
\end{claim}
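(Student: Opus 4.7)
The plan is to observe that this is essentially a direct edge-counting argument, and to handle the two cases (high density and low density) symmetrically by passing to the complement in the second case.

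First I would treat the low-density case: assume $d(X,Y) \leq \gamma$, so $e(X,Y) \leq \gamma |X||Y|$. Since $X' \subseteq X$ and $Y' \subseteq Y$, we trivially have the monotonicity $e(X',Y') \leq e(X,Y) \leq \gamma |X||Y|$. By the hypothesis on the sizes,
\[
|X'|\,|Y'| \;\geq\; (\gamma/\beta)^{1/2}\,(\gamma/\beta)^{1/2}\,|X|\,|Y| \;=\; (\gamma/\beta)\,|X|\,|Y|,
\]
so $|X|\,|Y| \leq (\beta/\gamma)|X'|\,|Y'|$. Chaining gives $e(X',Y') \leq \gamma \cdot (\beta/\gamma)|X'|\,|Y'| = \beta |X'|\,|Y'|$, which is exactly $d(X',Y') \leq \beta$.

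For the high-density case, I would simply apply the low-density case to the bipartite complement between $X$ and $Y$. If $d(X,Y) \geq 1-\gamma$, then the complementary bipartite graph has density at most $\gamma$; the argument above then yields complementary density at most $\beta$ on $(X',Y')$, i.e.\ $d(X',Y') \geq 1-\beta$.

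There is no real obstacle here — the only thing to check is that the size hypothesis $|X'|\geq(\gamma/\beta)^{1/2}|X|$, $|Y'|\geq(\gamma/\beta)^{1/2}|Y|$ is exactly what is needed to absorb the factor $\gamma/\beta$ lost when converting an absolute edge count into a density with respect to the smaller product $|X'||Y'|$. The exponent $1/2$ in the hypothesis is tight precisely because the loss is split between the two coordinates.
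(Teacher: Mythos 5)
Your argument is correct and is the obvious edge-counting argument that the paper implicitly has in mind (the paper labels this a ``trivial claim'' and gives no proof). The monotonicity step $e(X',Y') \leq e(X,Y)$, the absorption of the factor $\gamma/\beta$ by the size hypothesis, and the reduction of the high-density case to the low-density case via the bipartite complement are all exactly as one would expect.
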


The {\em weight} of a pair of disjoint vertex-sets $(X,Y)$ is defined as $\frac{|X||Y|}{n^2}$.
Let ${\cal Q} = \{V_1,\dots,V_q\}$ be a vertex-partition of $G$, i.e.
$V(G) = V_1 \uplus \dots \uplus V_q$. We say that
${\cal Q}$ is an {\em equipartition} if $\left| |V_i| - |V_j| \right| \leq 1$ for every $1 \leq i,j \leq q$.
We say that ${\cal Q}$ is {\em $\delta$-homogeneous} if the total weight of pairs $(V_i,V_j)$,
$1 \leq i < j \leq q$, which are not $\delta$-homogeneous, is at most $\delta$. Note that if ${\cal Q}$ is a $\delta$-homogeneous equipartition then the number of non-$\delta$-homogeneous pairs $(V_i,V_j)$ is at most $2\delta q^2$. In the other direction, if for an equipartition
$\Q = \{V_1,\dots,V_q\}$ the number of non-$\delta$-homogeneous pairs is at most $\delta q^2$, then $\Q$ is $2\delta$-homogeneous.

Let $H = (S \cup T, E)$ be a bipartite graph. A {\em completion} of $H$ is any graph on $V(H)$ that agrees with $H$ on the edges between $S$ and $T$. In other words, a completion of $H$ is any graph obtained by putting two arbitrary graphs on the sets $S$ and $T$. We say that $H$ is a {\em bipartite obstruction} for a graph family $\F$ if {\em every} completion of $H$ is not induced $\F$-free.
The first ingredient in the proofs of Theorems \ref{thm:easy_finite} and \ref{thm:easy_blowup} is the following lemma.

\begin{lemma}\label{lem:obstruction}
If a graph family $\F$ contains a bipartite graph, a co-bipartite graph and a split graph then $\F$ has a bipartite obstruction.
\end{lemma}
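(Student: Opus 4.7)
The plan combines a Ramsey-type reduction on each side of a completion with a carefully constructed bipartite host graph. Let $A_1, A_2, A_3 \in \F$ witness the hypothesis, with $V(A_1) = X_1 \cup Y_1$ the bipartition, $V(A_2) = X_2 \cup Y_2$ the partition into two cliques (of $A_2$), and $V(A_3) = C \cup I$ the clique/independent-set partition (of $A_3$). Set $B_1 := A_1$, $B_2 := A_2[X_2, Y_2]$ and $B_3 := A_3[C, I]$ --- these are bipartite graphs --- and let $m := \max_i v(A_i)$. Let $R$ be a Ramsey number such that every graph on $R$ vertices contains a homogeneous set of size $m$.

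The heart of the argument is to produce a bipartite graph $H$ on parts $(S, T)$ satisfying the following universality property $(\star)$: for every $R$-subsets $S_0 \subseteq S$ and $T_0 \subseteq T$, the bipartite graph $H[S_0, T_0]$ contains induced copies of (i) $B_1$ with $X_1 \subseteq S_0$, (ii) $B_2$ with $X_2 \subseteq S_0$, (iii) $B_3$ with $C \subseteq S_0$, and (iv) $B_3$ with $C \subseteq T_0$. Granting $(\star)$, the fact that $H$ is a bipartite obstruction for $\F$ reduces to a short four-case analysis. Take any completion $G$ of $H$ and apply Ramsey inside $G[S]$ and $G[T]$ to obtain homogeneous sets $S_0 \subseteq S$ and $T_0 \subseteq T$ of size $R$. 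If both are cliques in $G$, clause (ii) supplies $X_2 \subseteq S_0$ and $Y_2 \subseteq T_0$ with $H[X_2, Y_2] \cong B_2$; combined with the cliques $G[X_2]$ and $G[Y_2]$, this exhibits an induced copy of $A_2$. If both are independent sets, clause (i) gives an induced copy of $A_1$. In the two mixed cases, clauses (iii) or (iv) place $C$ on the clique side and $I$ on the independent side, producing an induced copy of $A_3$.

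The main technical obstacle is constructing $H$ with property $(\star)$. A direct random construction does not suffice, because balancing the Ramsey bound (which forces $|S|$ to be exponentially large in $R$) against a union bound over all $R$-sub-rectangles (which would require $|S|$ to be much smaller) turns out to be impossible for any $m \geq 1$. I would therefore build $H$ via an explicit iterated blow-up: start from a small bipartite graph $H_0$ containing each of the four templates as an induced sub-bipartite-graph in the prescribed orientation, blow up each vertex of $H_0$ into a large block, and iterate so that every $R$-subset of the resulting graph is forced to span an induced copy of $H_0$ (and hence, by restriction, each of the templates). This construction is the combinatorial core of the lemma; once it is in hand, the Ramsey plus case analysis above is immediate.
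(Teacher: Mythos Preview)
Your overall plan---Ramsey on each side to produce homogeneous sets, then locate the four bipartite templates between them, followed by the four-case analysis---is exactly the skeleton of the paper's argument. The gap is in the construction of $H$. Property $(\star)$ as stated cannot be achieved by any blow-up, iterated or not: in a blow-up of $H_0$, vertices lying in the same $S$-block are twins with identical neighbourhoods in $T$, so taking $S_0$ to consist of $R$ such twins (possible whenever a block has size at least $R$, which is forced once $|S|$ is large enough to run Ramsey) yields an $H[S_0,T_0]$ in which every $S_0$-vertex has the same neighbourhood. Such a rectangle cannot contain an induced bipartite copy of any template where two $X$-side vertices have distinct neighbourhoods---which already rules out $B_1$ for essentially any nontrivial bipartite $A_1$. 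More fundamentally, $(\star)$ demands that \emph{every} $R\times R$ rectangle be non-monochromatic, whereas the bipartite Ramsey phenomenon forces a monochromatic $R\times R$ rectangle in any bipartite graph on parts of size roughly $2^R$; this is in direct tension with the requirement $|S|\ge R(R,R)$ needed to extract a homogeneous $R$-set in an arbitrary completion.

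The paper circumvents this by weakening $(\star)$. Rather than a single homogeneous set on each side, it repeatedly applies Ramsey to extract many pairwise-disjoint homogeneous $f$-sets $S_1,\dots,S_r\subseteq S$ and $T_1,\dots,T_r\subseteq T$ (with $f\approx m$ and $r\approx k/f$), and only requires that for every such family there exist \emph{some} pair $(i,j)$ for which $H[S_i,T_j]$ contains all four templates. This weaker property \emph{is} achieved by a random bipartite $H$: each fixed $f\times f$ rectangle succeeds with probability at least $2^{-f^2}$, the $r^2$ rectangles of a family are independent, and the number of families is at most $k^{2k}$, so the union bound over families (not over single rectangles) closes for $k$ large. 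The ``many disjoint homogeneous sets'' trick is precisely what buys the $r^2$ independent attempts per family needed to make the probabilistic construction go through.
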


\begin{definition}\label{def:bp_copy}
Let $H = (S \cup T, E)$ be a bipartite graph.
An {\em induced bipartite copy} of $H$ in a graph $G$ is an injection
$\varphi : V(H) \rightarrow V(G)$ such that for every
$s \in S$ and $t \in T$ we have $(s,t) \in E(H)$ if and only if
$\left( \varphi(s),\varphi(t) \right) \in E(G)$.
\end{definition}
Notice the difference between an induced copy of $H$ and an induced {\em bipartite} copy of $H$. In an induced copy of $H$, the two sides $S$ and $T$ are mapped to independent sets. In contrast, in an induced bipartite copy there is no restriction on the edges inside $\varphi(S)$ and inside $\varphi(T)$, as the definition is only concerned with the edges between $S$ and $T$.
The following simple claim states that if a graph contains ``many" induced bipartite copies of $H$ then a ``relatively small" sample contains such a copy with high probability.
\begin{claim}\label{claim:sample}
Let $H = (S \cup T, E)$ be a bipartite graph with $|S| = |T| = k$. Suppose that an $n$-vertex graph $G$ contains at least $\alpha n^{2k}$ induced bipartite copies of $H$. Then with probability at least $\frac{2}{3}$, a sample of $4k/\alpha$ vertices from $G$ contains an induced bipartite copy of $H$.
\end{claim}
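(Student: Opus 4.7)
The plan is a short first-moment argument based on independence of disjoint blocks of the sample.

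First I would switch to the with-replacement model, viewing the sample as $m = 4k/\alpha$ i.i.d.\ uniform vertices of $G$. This can only decrease the probability of finding an induced bipartite copy, since repetitions prevent the sample map from being an injection, so any lower bound here implies the same bound for sampling without replacement. I would then fix once and for all an enumeration $s_1,\dots,s_k,t_1,\dots,t_k$ of $V(H)$ and partition the sampled sequence into $r = \lfloor m/(2k) \rfloor$ disjoint ordered blocks of length $2k$, calling a block $(v_1,\dots,v_{2k})$ \emph{good} if the map $s_i\mapsto v_i$, $t_i\mapsto v_{k+i}$ is an induced bipartite copy of $H$ (in particular the $v_i$ are distinct).

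Next I would estimate the probability that a single block is good. Each block is a uniform element of $V(G)^{2k}$, and, with respect to the fixed enumeration of $V(H)$, the induced-bipartite-copy hypothesis guarantees that at least $\alpha n^{2k}$ of these $n^{2k}$ tuples are good. Hence each block is good with probability at least $\alpha n^{2k}/n^{2k} = \alpha$, and, since the blocks are disjoint, these events are jointly independent.

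Combining the two ingredients yields
\[
\Pr[\text{no block is good}] \;\le\; (1-\alpha)^{r} \;\le\; (1-\alpha)^{2/\alpha} \;\le\; e^{-2} \;<\; \tfrac{1}{3},
\]
using $r \ge 2/\alpha$ up to a harmless rounding that can be absorbed by the trivial observation that the claim is vacuous unless $\alpha$ is bounded away from $1$. Any good block witnesses an induced bipartite copy inside the sample, so the sample contains such a copy with probability at least $2/3$. I do not anticipate any real obstacle here; the only point requiring mild care is the with-versus-without-replacement issue for the sample, which the reduction in the first step handles cleanly.
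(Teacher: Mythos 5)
The paper states this claim without proof, so there is nothing to compare against directly; but your blocking/first-moment argument is indeed the standard and expected proof, and it is essentially correct. A few small points worth tightening. First, the with-versus-without-replacement reduction is right, but the reason given (``repetitions prevent the sample map from being an injection'') is not the real justification, since we are asking whether the sampled \emph{set} contains a copy, not whether the sampling sequence itself is injective. The clean argument is a coupling: sample with replacement, take the set of distinct vertices obtained, and enlarge it by further uniformly random vertices until it has the right size; the resulting set is a uniform subset and contains the with-replacement set, and the event of interest is monotone increasing. Second, the counting step is fine: fixing an ordering of $V(H)$, each of the at least $\alpha n^{2k}$ induced bipartite copies of $H$ (which, per Definition \ref{def:bp_copy}, are injections $V(H)\to V(G)$) corresponds bijectively to a ``good'' ordered $2k$-tuple, so each block is good with probability at least $\alpha$, and disjoint blocks are independent.

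The only place that needs care is the rounding in the final bound, and your justification there (``the claim is vacuous unless $\alpha$ is bounded away from $1$'') is both unnecessary and not quite right: the claim is meaningful for all $\alpha\in(0,1)$, and $\alpha<1$ always holds automatically since the number of injections $V(H)\to V(G)$ is below $n^{2k}$. But the bound goes through anyway, with no case analysis: with $r=\lfloor 2/\alpha\rfloor$, one has $\alpha\in\big(2/(r+1),\,2/r\big]$, hence
\[
(1-\alpha)^{r} \;<\; \Big(1-\tfrac{2}{r+1}\Big)^{r} \;=\; \Big(\tfrac{r-1}{r+1}\Big)^{r} \;\le\; e^{-2} \;<\; \tfrac13,
\]
since $\big(\tfrac{r-1}{r+1}\big)^{r}$ increases to $e^{-2}$ as $r\to\infty$. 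So you should replace the ``vacuous'' remark with this one-line verification (or with a short case split on $\alpha\ge 2/3$ versus $\alpha<2/3$). With those cosmetic fixes the proof is complete and is almost certainly what the authors had in mind when they called the claim ``simple.''
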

\noindent
The following lemma is the main tool used in the proofs of Theorems \ref{thm:easy_finite} and \ref{thm:easy_blowup}. For Theorem \ref{thm:easy_finite} we also need Lemma \ref{lem:almost_hom_pairs}.

\begin{lemma}\label{lem:strong_reg}
For every $k \geq 1$ there is $C = C_{\ref{lem:almost_hom_pairs}}(k)$ such that the following holds for every bipartite graph $H = (S \cup T, E)$ with
$|S| = |T| = k$. For every
$\gamma,\delta \in \nolinebreak (0,\frac{1}{2})$, every graph $G$ on
$n \geq \nolinebreak n_0(k,\delta,\gamma)$ vertices either contains at least $(\gamma\delta)^{C}n^{2k}$ induced bipartite copies of $H$ or satisfies the following: there is an equipartition
$\Q = \{Q_1,...,Q_q\}$ of $G$ with
$\delta^{-1} \leq q \leq \delta^{-C}$ parts and there are subsets
$U_i \subseteq Q_i$ such that
\begin{enumerate}
\item For all but at most $\delta q^2$ of the pairs
$1 \leq i < j \leq q$ the following holds: $(Q_i,Q_j)$ is $\delta$-homogeneous and the dominant value of $(U_i,U_j)$ is the same as that of $(Q_i,Q_j)$.
\item $(U_i,U_j)$ is $\gamma$-homogeneous for every $1 \leq i < j \leq q$.
\item $|U_i| \geq \left( \gamma\delta \right)^{C} n$ for every
$1 \leq i \leq q$.
\end{enumerate}
\end{lemma}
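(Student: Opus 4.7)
The plan is to prove Lemma \ref{lem:strong_reg} in two stages: first obtain the outer equipartition $\Q$ via a conditional regularity lemma, then refine each block $Q_i$ to its subset $U_i$ via a dependent-random-choice style argument.

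\textbf{Stage 1 (outer partition).} Apply Lemma \ref{lem:almost_hom_pairs} to $G$ with a suitably chosen homogeneity parameter. This yields the desired dichotomy: either $G$ already contains at least $(\gamma\delta)^C n^{2k}$ induced bipartite copies of $H$, in which case the first alternative of the conclusion holds and we are done, or there is an equipartition $\Q = \{Q_1,\dots,Q_q\}$ with $\delta^{-1} \leq q \leq \delta^{-C_1}$ such that all but $\delta q^2$ pairs $(Q_i,Q_j)$ are $\delta$-homogeneous. The crucial point is that the absence of many induced bipartite copies of $H$ forces $q$ to be \emph{polynomially} bounded in $1/\delta$ rather than tower-type, which is exactly what a conditional regularity lemma in the spirit of Alon--Fischer--Newman provides.

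\textbf{Stage 2 (refinement to $U_i$).} Define a dominant pattern $\sigma:\binom{[q]}{2}\to\{0,1\}$ by letting $\sigma(i,j)$ equal the dominant value of $(Q_i,Q_j)$ for each $\delta$-homogeneous pair (arbitrarily otherwise). Sample a uniform random tuple $W=(w_1,\dots,w_t)\in V(G)^t$ with $t$ of order $k\log(1/\gamma\delta)$; let $j(\ell)$ denote the index of the block containing $w_\ell$, and for each $i$ set
\[
U_i(W) = \bigl\{\, v \in Q_i : (v,w_\ell)\in E(G) \Leftrightarrow \sigma(i,j(\ell))=1 \text{ for every } 1\leq\ell\leq t \,\bigr\}.
\]
An expectation calculation, using that a random $w\in Q_{j(\ell)}$ ``matches'' $\sigma$ on at least a $(1-\delta)$-fraction of $Q_i$ whenever $(Q_i,Q_{j(\ell)})$ is $\delta$-homogeneous, shows that with positive probability every $|U_i(W)|$ exceeds $(\gamma\delta)^C n$, establishing conclusion 3. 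Claim \ref{claim:hom_subsets} then automatically forces the dominant value of $(U_i,U_j)$ to agree with that of $(Q_i,Q_j)$ on every $\delta$-homogeneous pair, giving conclusion 1.

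\textbf{Main obstacle.} The hard part is conclusion 2 --- forcing \emph{every} pair $(U_i,U_j)$ to be $\gamma$-homogeneous, including those for which $(Q_i,Q_j)$ was not $\delta$-homogeneous. This is where the assumed bound on induced bipartite copies of $H$ is crucially used. Arguing by contradiction, if for some pair $(i,j)$ a non-negligible fraction of $W$'s produces $(U_i(W),U_j(W))$ with density strictly between $\gamma$ and $1-\gamma$, then each such $W$ together with a ``mixed'' pair in $U_i(W)\times U_j(W)$ encodes a labeled bipartite structure on $2k$ vertices of $G$; double-counting over all such $W$'s and mixed pairs recovers $\geq (\gamma\delta)^C n^{2k}$ induced bipartite copies of $H$, contradicting the assumption. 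The delicate combinatorial design is in matching the structure of $H$ to the definition of $U_i$ via $\sigma$ --- specifically, choosing $\sigma$ and the distribution of $W$ so that the $2k$-vertex substructure extracted from a bad $W$ is precisely (bipartite-isomorphic to) $H$ --- and in balancing the parameter $t$ so that the size bound in conclusion 3 and the embedding count are simultaneously consistent with the same exponent $C$.
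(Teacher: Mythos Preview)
Your Stage~1 is essentially right in spirit (though you cite the wrong lemma: you want the AFN-type regularity lemma, which in the paper is Lemma~\ref{lem:reg}, not Lemma~\ref{lem:almost_hom_pairs}). The real divergence is in Stage~2, and there your dependent-random-choice plan has a genuine gap.

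The paper does \emph{not} use dependent random choice here at all. It simply applies the conditional regularity lemma a \emph{second time}, now with a much finer parameter $\beta \approx \gamma\delta/q^4$, to produce an equipartition $\mathcal U$ refining $\Q$. Each $U_i$ is then just a randomly chosen part of $\mathcal U$ inside $Q_i$. Because $\mathcal U$ is $\beta$-homogeneous and $\beta\le\gamma$, conclusion~2 comes for free once a union bound shows that with positive probability every selected pair $(U_i,U_j)$ is $\beta$-homogeneous; conclusion~1 is handled by a separate Markov argument on ``bad'' pairs. The appearance of $H$ is entirely inside the AFN lemma (which outputs copies of \emph{every} $k\times k$ pattern when no homogeneous partition exists), not in any bespoke embedding step.

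Your scheme, by contrast, defines $U_i(W)$ via the dominant-value pattern $\sigma$ of the $Q$-pairs. That pattern has nothing to do with $H$. So when you reach the ``main obstacle'' and try to argue that a non-$\gamma$-homogeneous pair $(U_i(W),U_j(W))$ together with $W$ encodes an induced bipartite copy of $H$, there is no mechanism forcing the resulting $2k$-vertex structure to match $H$'s bipartite adjacency matrix: the adjacencies of $u,v\in U_i\times U_j$ to the $w_\ell$'s are dictated by $\sigma(i,j(\ell))$, not by $H$. If instead you redefined $U_i(W)$ using $H$'s adjacency pattern, you would lose the size lower bound, since the $\delta$-homogeneity of $(Q_i,Q_{j(\ell)})$ only controls the $\sigma$-matching vertices, not the $H$-matching ones. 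You flag this tension yourself (``the delicate combinatorial design\ldots'') but do not resolve it, and I do not see a way to make the double-counting produce copies of the \emph{given} $H$ with this setup. A secondary issue: your appeal to Claim~\ref{claim:hom_subsets} for conclusion~1 requires $|U_i|/|Q_i|$ to be at least of order $\delta^{1/2}$, which is much stronger than the $(\gamma\delta)^C n$ bound you establish; the paper handles conclusion~1 by a direct expectation/Markov argument instead.
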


\begin{lemma}\label{lem:almost_hom_pairs}
For every $k \geq 1$ there is $C = C_{\ref{lem:almost_hom_pairs}}(k)$ such that the following holds for every bipartite graph $H = (S \cup T, E)$ with
$|S| = |T| = k$. Let $m \geq 1$ be an integer and let
$\alpha \in (0,\frac{1}{2})$. Then for every graph $G$ on
$n \geq \nolinebreak n_0(k,m,\alpha)$ vertices, either $G$ contains at least $\alpha^{C}2^{-Cm}n^{2k}$ induced bipartite copies of $H$ or there are pairwise disjoint subsets $W_1,\dots,W_m \subseteq V(G)$ with the following properties:
\begin{enumerate}
\item Either $d(W_i,W_j) \geq 1 - \alpha$ for every $1 \leq i < j \leq m$ or $d(W_i,W_j) \leq \alpha$ for every $1 \leq i < j \leq m$.
\item $|W_i| \geq \alpha^C 2^{-Cm} n$ for every $1 \leq i \leq m$.
\end{enumerate}
\end{lemma}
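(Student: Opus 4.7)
The plan is to derive Lemma~\ref{lem:almost_hom_pairs} from Lemma~\ref{lem:strong_reg} by a Ramsey-type argument on the clusters produced by the latter. Given $k$, $m$, and $\alpha$, I would invoke Lemma~\ref{lem:strong_reg} with $\gamma := \alpha$ and $\delta$ chosen smaller than the reciprocal of the Ramsey number $R(m,m)$; since $R(m,m) \leq 4^m$, taking $\delta := \min\{\alpha, 4^{-m}\}$ suffices. If Lemma~\ref{lem:strong_reg} returns at least $(\gamma\delta)^{C_{\ref{lem:strong_reg}}(k)} n^{2k}$ induced bipartite copies of $H$, then this quantity already has the form $\alpha^{C} 2^{-Cm} n^{2k}$ for $C$ a suitable constant multiple of $C_{\ref{lem:strong_reg}}(k)$, and we are done.

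Otherwise, we obtain an equipartition $\Q = \{Q_1, \ldots, Q_q\}$ with $q \geq \delta^{-1} \geq R(m,m)$, together with subsets $U_i \subseteq Q_i$ such that $(U_i, U_j)$ is $\gamma$-homogeneous for \emph{every} pair $i < j$ and each $|U_i| \geq (\gamma\delta)^{C_{\ref{lem:strong_reg}}(k)} n$. I would then $2$-color the edge set of $K_q$ by assigning to $\{i,j\}$ the dominant value of $(U_i, U_j)$, which is well-defined because $(U_i,U_j)$ is $\gamma$-homogeneous. Ramsey's theorem produces a monochromatic clique of size $m$ on some index set $\{i_1, \ldots, i_m\}$, and setting $W_\ell := U_{i_\ell}$ gives pairwise disjoint sets (they sit inside disjoint $Q_{i_\ell}$). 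Condition~(1) of the lemma holds because every pair $(W_r, W_s)$ is $\gamma$-homogeneous with a common dominant value across the Ramsey clique, and $\gamma = \alpha$; condition~(2) is inherited from the lower bound on $|U_i|$.

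I do not expect a genuine obstacle here: Lemma~\ref{lem:strong_reg} is doing the heavy lifting of producing clusters on which pair-densities are pushed to essentially $0$ or $1$, and Ramsey is a one-line tool to extract a monochromatic substructure of the desired size. One worthwhile sanity check is that the $\delta q^2$ ``bad'' pairs appearing in clause~(1) of Lemma~\ref{lem:strong_reg} play no role in the argument, since clauses~(2) and~(3) of that lemma hold for \emph{all} pairs $i<j$. Matching constants is routine: both bounds in the conclusion acquire the form $\alpha^{O(C_{\ref{lem:strong_reg}}(k))} \cdot 2^{-O(m)\cdot C_{\ref{lem:strong_reg}}(k)}$, which can be absorbed into $\alpha^C 2^{-Cm}$ by taking $C_{\ref{lem:almost_hom_pairs}}(k)$ to be a sufficiently large constant multiple of $C_{\ref{lem:strong_reg}}(k)$.
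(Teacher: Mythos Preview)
Your approach is correct, but it is organized differently from the paper. The paper proves both Lemma~\ref{lem:strong_reg} and Lemma~\ref{lem:almost_hom_pairs} as \emph{siblings}, each deduced directly from the more primitive Lemma~\ref{lem:reg} (the adaptation of the Alon--Fischer--Newman conditional regularity lemma). For Lemma~\ref{lem:almost_hom_pairs}, the paper applies Lemma~\ref{lem:reg} once with parameter $\beta = \alpha 4^{-m-2}$, obtaining a $\beta$-homogeneous equipartition in which only \emph{most} pairs are $\beta$-homogeneous; it then uses Tur\'an's theorem on the ``homogeneous pairs'' graph to extract a clique of size $4^m$, and finally Ramsey on that clique. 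This yields $C_{\ref{lem:almost_hom_pairs}}(k)=7D$ with $D=D_{\ref{lem:reg}}(k)$.

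You instead invoke the stronger Lemma~\ref{lem:strong_reg}, which already guarantees that \emph{all} pairs $(U_i,U_j)$ are $\gamma$-homogeneous, so you can skip Tur\'an and go straight to Ramsey. This is logically sound (there is no circularity: the paper's proof of Lemma~\ref{lem:strong_reg} uses only Lemma~\ref{lem:reg}), and your constant bookkeeping checks out since $\gamma\delta=\alpha\cdot\min\{\alpha,4^{-m}\}\ge \alpha^2 4^{-m}$. The trade-off is in the constants: Lemma~\ref{lem:strong_reg} carries $C_{\ref{lem:strong_reg}}(k)=23D^2$ (it invokes Lemma~\ref{lem:reg} twice internally), so your route gives $C_{\ref{lem:almost_hom_pairs}}(k)$ of order $D^2$ rather than the paper's $7D$. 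Conceptually your argument is a touch cleaner; quantitatively the paper's is tighter and more self-contained.
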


\noindent
The last tool we need in the proofs of Theorems \ref{thm:easy_finite} and \ref{thm:easy_blowup} is the following counting lemma.

\begin{lemma}\label{lem:count}
Let $F$ be a graph with $V(F) = \{1,\dots,r\}$ and let
$\lambda \in (0,1)$. Let $W_1,...,W_r$ be pairwise-disjoint vertex sets in an $n$-vertex graph $G$, each of size at least $\lambda n$, such that for every $1 \leq \nolinebreak i < j \leq r$, if $(i,j) \in E(F)$ then
$d(W_i,W_j) \geq 1 - \frac{1}{2r^2}$ and if $(i,j) \notin E(F)$ then
$d(W_i,W_j) \leq \frac{1}{2r^2}$.
Then with probability at least $\frac{2}{3}$, a sample of
$9r/\lambda$ vertices of $G$ contains an induced copy of $F$.
\end{lemma}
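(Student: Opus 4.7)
My approach is to reduce the lemma to a deterministic counting claim in $W_1 \times \cdots \times W_r$ together with a basic hitting-set estimate for the random sample, and then combine the two via a clean conditioning argument.

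The first step is the counting claim: the number of ordered tuples $(u_1,\dots,u_r) \in W_1 \times \cdots \times W_r$ that form an induced copy of $F$ is at least $\tfrac{3}{4}|W_1|\cdots|W_r|$. Indeed, for each pair $1 \le i < j \le r$ the density hypothesis gives that the fraction of tuples whose projection to $W_i \times W_j$ has the wrong adjacency (w.r.t.\ whether $(i,j) \in E(F)$) is at most $1/(2r^2)$, so a union bound over the $\binom{r}{2}<r^2/2$ pairs leaves at least a $3/4$ fraction of tuples as induced copies.

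The second step is to show that a uniform sample $S \subseteq V(G)$ of size $s=9r/\lambda$ meets every $W_i$ with high probability. Since $|W_i|\ge\lambda n$, one has $\Pr[S\cap W_i=\emptyset]\le(1-\lambda)^s\le e^{-9r}$, and hence $\Pr[\exists i : S\cap W_i=\emptyset]\le r e^{-9r}\le e^{-9}$ (using that the function $r\mapsto re^{-9r}$ is decreasing on $[1,\infty)$).

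The final step combines these. Condition on the event $A=\{S\cap W_i\ne\emptyset \text{ for all } i\}$ and pick $u_i$ uniformly from $S\cap W_i$ for each $i$. The key (and most delicate) point is that, conditional on $A$, the tuple $(u_1,\dots,u_r)$ is distributed as $r$ independent uniform samples from $W_1,\dots,W_r$. This is because, conditional on the class sizes $|S\cap W_i|=k_i$, the sets $S\cap W_i$ are independent uniformly random size-$k_i$ subsets of the disjoint $W_i$ (the standard hypergeometric/multinomial description of uniform sampling without replacement from a partitioned universe), so averaging a uniform pick over the class sizes gives independent uniform elements of the $W_i$. The counting claim then yields $\Pr[(u_1,\dots,u_r)\text{ is an induced copy}\mid A]\ge 3/4$, and therefore
\[
\Pr[S\text{ contains an induced copy of } F]\ \ge\ \Pr[A]\cdot\tfrac{3}{4}\ \ge\ (1-e^{-9})\cdot\tfrac{3}{4}\ >\ \tfrac{2}{3},
\]
as required. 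I expect the conditional independence/uniformity claim in this last step to be the only point that needs careful writing; everything else is either a one-line union bound or the elementary density computation in step one.
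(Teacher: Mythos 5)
Your proof is correct and follows essentially the same two-stage structure as the paper's: a union-bound counting estimate showing that independent uniform picks $u_i \in W_i$ form an induced copy of $F$ with probability at least $3/4$, a hitting estimate for the random sample $S$, and a conditioning step linking the two. The one place you add value is the paragraph making the conditional-independence claim explicit (that given $A$, uniform picks $u_i\in S\cap W_i$ are distributed as independent uniforms on $W_1,\dots,W_r$); the paper asserts the conclusion "conditioned on $\A$, $G[S]$ contains an induced copy of $F$ with probability at least $\frac{3}{4}$" immediately after the independent-sampling computation without spelling out this bridge.
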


\medskip

We are now ready to prove Theorems \ref{thm:easy_finite} and \ref{thm:easy_blowup}.
The proofs of Lemmas \ref{lem:obstruction}, \ref{lem:strong_reg}, \ref{lem:almost_hom_pairs} and \ref{lem:count} are given
in Subsection \ref{subsec:aux_lemmas}.

\begin{proof}[Proof of Theorem \ref{thm:easy_finite}]
Let $\F$ be a finite graph family which contains a bipartite graph, a co-bipartite graph and a split-graph. We prove that
${\cal P} = {\cal P}^*_{\cal F}$ is testable per Definition \ref{def:test} with
$f_{\cal P}(\varepsilon) = \varepsilon^{-c}$ for some $c = c(\F)$.
We assume that $c$ is large enough where needed. By Lemma \ref{lem:obstruction}, $\F$ has a bipartite obstruction
$H = (S \cup T, E)$. We can assume (by adding additional vertices if needed) that $|S| = |T| =: k$.
Set
\begin{equation*} 
m = \max_{F \in \F}{v(F)}, \; \; \;
C = \max\{ C_{\ref{lem:strong_reg}}(k), C_{\ref{lem:almost_hom_pairs}}(k)\},
\; \; \;
\gamma = \frac{1}{2m^2} \cdot 2^{-8Cm} \;.
\end{equation*}

\noindent
Let $\varepsilon < \frac{1}{2}$ and set $$\delta = \varepsilon/4.$$

Let $G$ be an $n$-vertex graph which is $\varepsilon$-far from being induced $\F$-free. We will assume that $n$ is large enough where needed.
Assume first that $G$ contains at least
$(\gamma\delta/2m)^{2Ckm}n^{2k}$ induced bipartite copies of $H$. Then by Claim \ref{claim:sample}, a sample of
$4k \cdot (2m/\gamma\delta)^{2Ckm}$ vertices from $G$ contains an induced bipartite copy of $H$ with probability at least $\frac{2}{3}$. Since $H$ is a bipartite obstruction for $\F$, every graph which contains an induced bipartite copy of $H$ is not induced $\F$-free. Since $C,m,k,\gamma$ depend only on $\F$ and $\delta$ depends only on $\varepsilon$, and since $\varepsilon < \frac{1}{2}$, we clearly have
$4k \cdot (2m/\gamma\delta)^{2Ckm} \leq \varepsilon^{-c}$
for a large enough $c = c(\F)$. Thus, we established the required result in this case.
	
Suppose from now on that $G$ contains less than
$(\gamma\delta/2m)^{2Ckm}n^{2k}$ induced bipartite copies of $H$.
We apply Lemma \ref{lem:strong_reg} to $G$ with the parameters $\delta,\gamma$ defined above, to get
an equipartition $\Q = \{Q_1,...,Q_q\}$ and subsets
$U_i \subseteq Q_i$ with the properties stated in the lemma.
In particular, for every $1 \leq i \leq q$ we have
$|U_i| \geq (\gamma\delta)^{C_{\ref{lem:strong_reg}}(k)}n \geq (\gamma\delta)^Cn$ (see the choice of $C$), implying that
$(1/2m^2)^{C}2^{-Cm}|U_i|^{2k} \geq
(\gamma\delta/2m)^{2Ckm}n^{2k}$.
Thus, by our assumption above, $G[U_i]$ contains less than
$(1/2m^2)^{C}2^{-Cm}|U_i|^{2k}$ induced bipartite copies of $H$.
By Lemma \ref{lem:almost_hom_pairs}, applied to $G[U_i]$ with $m$ as above and
$\alpha = \frac{1}{2m^2}$, there are disjoint sets
$W_{i,1},\dots,W_{i,m} \subseteq U_i$ with the properties stated in the lemma. In particular, for every $1 \leq j \leq m$ we have
\begin{equation}\label{eq:Wi_size_bound}
|W_{i,j}| \geq
\left( 1/2m^2 \right)^{C_{\ref{lem:almost_hom_pairs}}(k)}
2^{-C_{\ref{lem:almost_hom_pairs}}(k)m} |U_i| \geq
2^{-4Cm}|U_i| \geq
\big(2^{-4m} \gamma\delta \big)^C n.
\end{equation}

\noindent
Let $G'$ be the graph obtained from $G$ by making the following changes.
\begin{enumerate}
\item For every $1 \leq i \leq q$, if
$d(W_{i,j},W_{i,j'}) \geq 1 - \frac{1}{2m^2}$
(resp. $d(W_{i,j},W_{i,j'}) \leq \frac{1}{2m^2}$) for every
$1 \leq \nolinebreak j < \nolinebreak j' \leq m$, then turn $Q_i$ into a clique (resp. an independent set). By Lemma \ref{lem:almost_hom_pairs}, one of these options holds.
\item For every $1 \leq i < j \leq q$, if
$d(U_i,U_j) \geq 1 - \gamma$ then turn $(Q_i,Q_j)$ into a complete bipartite graph and if
$d(U_i,U_j) \leq \gamma$ then turn $(Q_i,Q_j)$ into an empty bipartite graph. By Lemma \ref{lem:strong_reg}, one of these options holds.
\end{enumerate}

We claim that the number of edge-changes made in items 1-2 is less than $\varepsilon n^2$. To prove this, define
$\N$ to be the set of pairs
$1 \leq i < j \leq q$ for which either (a) $(Q_i,Q_j)$ is not $\delta$-homogeneous, or (b)
the dominant value of $(U_i,U_j)$ is not the same as that of $(Q_i,Q_j)$.
By Lemma \ref{lem:strong_reg} we have
$\left| \N \right| \leq \delta q^2$, implying that
$\sum_{(i,j) \in \N}{|Q_i||Q_j|} \leq 2\delta n^2$. Notice that if
$(i,j) \notin \N$ then the number of edge-changes made in the bipartite graph $(Q_i,Q_j)$ is at most $\delta |Q_i||Q_j|$. Therefore, the overall number of edge-changes made in item 2 is at most
$\sum_{i < j}{\delta |Q_i||Q_j|} + \sum_{(i,j) \in \N}{|Q_i||Q_j|} \leq \delta n^2 + 2\delta n^2 = 3\delta n^2$.
Furthermore, since $q \geq 1/\delta$ (by Lemma \ref{lem:strong_reg}), the number of edge-changes made in item $1$ is at most
$q \binom{\left\lceil n/q \right\rceil}{2} <
\nolinebreak n^2/q \leq \nolinebreak \delta n^2$. In conclusion, the number of edge-changes made when turning $G$ into $G'$ is less than
$4\delta n^2 = \varepsilon n^2$.

Since $G$ is $\varepsilon$-far from being induced $\F$-free, $G'$ must contain an induced copy of some $F \in \nolinebreak \F$. Suppose wlog that $Q_{1},\dots,Q_p$ are the parts of $\Q$ which intersect this copy and let $X_{i}$ be the intersection of this copy with $Q_{i}$. From the definition of $G'$ it follows that the sets $X_1,\dots,X_p$ and the bipartite graphs $(X_i,X_j)$, $1 \leq i < j \leq p$, are homogeneous. By our choice of $m$ we clearly have $r := \nolinebreak v(F) \leq \nolinebreak m$.
We claim that the sets
$W_{i,\ell}$, where $1 \leq i \leq p$ and $1 \leq \ell \leq |X_i|$,
satisfy condition 1 of Lemma \ref{lem:count} with respect to $F$ {\em in the graph $G$}.
First, observe that for every $1 \leq i \leq p$,
if $X_i$ is a clique (resp. an independent set) then $d_{G'}(Q_i) = 1$ (resp. $d_{G'}(Q_i) = 0$), implying that
$d_G(W_{i,a},W_{i,b}) \geq 1 - 1/2m^2 \geq 1 - 2/2r^2$
(resp. $d_G(W_{i,a},W_{i,b}) \leq 1/2m^2 \leq 1/2r^2$) for every
$1 \leq a < b \leq m$. Secondly, let $1 \leq i < j \leq p$ and assume wlog that $(X_i,X_j)$ is a complete bipartite graph (the case that $(X_i,X_j)$ is an empty bipartite graph is symmetrical). Then
$d_{G'}(Q_i,Q_j) = 1$, implying that $d_G(U_i,U_j) \geq 1 - \gamma$. By Claim \ref{claim:hom_subsets}, our choice of $\gamma$ and (\ref{eq:Wi_size_bound}), we get that
$d_G(W_{i,a},W_{j,b}) \geq 1 - 1/2m^2 \geq 1 - 1/2r^2$ for every
$1 \leq a,b \leq m$.

Set $\lambda = (2^{-4m}\gamma\delta)^C$. By Lemma \ref{lem:count} and (\ref{eq:Wi_size_bound}), a sample of
$9r/\lambda \leq (9m \cdot 2^{4m}/\gamma\delta)^{C}$ vertices from $G$ contains an induced copy of $F$ (and hence is not induced $\F$-free) with probability at least $\frac{2}{3}$. It is easy to see that
$(9m \cdot 2^{4m}/\gamma\delta)^{C} \leq \varepsilon^{-c}$ for a large enough
$c = c(\F)$. This completes the proof of the theorem.
\end{proof}

\begin{proof}[Proof of Theorem \ref{thm:easy_blowup}]
	Let $\F$ be a graph family as in the statement of the theorem.
	We prove that ${\cal P} = {\cal P}^*_{\cal F}$ is testable per Definition \ref{def:test} with
	$f_{\cal P}(\varepsilon) = \varepsilon^{-c}$ for some $c = c(\F)$. We assume that $c$ is large enough where needed.
	By Lemma \ref{lem:obstruction}, $\F$ has a bipartite obstruction
	$H = (S \cup T, E)$. We can assume that $|S| = |T| =: k$ (by adding additional vertices, if necessary). Throughout the proof, $C$ is the constant $C_{\ref{lem:strong_reg}}(k)$ from Lemma \ref{lem:strong_reg}.
	Let $\varepsilon < \frac{1}{2}$ and set
	\begin{equation*} 
	\delta = \varepsilon/4, \; \; \;
	\gamma = 0.5\delta^{2C}.
	\end{equation*}
	
	Let $G$ be an $n$-vertex graph which is $\varepsilon$-far from satisfying ${\cal P}^*_{\cal F}$. We will assume that $n$ is large enough where needed.
	Suppose first that $G$ contains at least $(\gamma\delta)^{C}n^{2k}$ induced bipartite copies of $H$. By Claim \ref{claim:sample}, a sample of $4k(\gamma\delta)^{-C}$ vertices from $G$ contains an induced bipartite copy of $H$ (and hence is not induced $\F$-free) with probability at least $\frac{2}{3}$. Since $k,C$ depend only on $\F$ and $\gamma,\delta$ depend only on $\F$ and $\varepsilon$, and since $\varepsilon < \frac{1}{2}$, we clearly have $4k (\gamma\delta)^{-C} \leq \varepsilon^{-c}$
	for a large enough $c = c(\F)$. Thus, in this case the theorem holds.
	
	Suppose, then, that $G$ contains less than $(\gamma\delta)^{C}n^{2k}$ induced bipartite copies of $H$.
	We apply Lemma \ref{lem:strong_reg} to $G$ with the parameters $\delta,\gamma$ defined above, to get an equipartition
	$\mathcal{P} = \{Q_1,...,Q_q\}$ and subsets $U_i \subseteq Q_i$ with the properties stated in the lemma.
	
	Define a graph $F$ on $[q]$ as follows. For $1 \leq i < j \leq q$, if $d(U_i,U_j) \geq 1 - \gamma$ then
	$(i,j) \in E(F)$ and if $d(U_i,U_j) \leq \gamma$ then
	$(i,j) \notin E(F)$. We will show that $F$ does not satisfy
	${\cal P}^*_{\cal F}$. Let us first complete the proof based on this fact. By Lemma \ref{lem:strong_reg} we have
	$v(F) = q \leq \delta^{-C}$ and hence $\gamma \leq 1/2q^2$. Now it is easy to see that the sets $U_1,\dots,U_q$ satisfy condition $1$ of Lemma \ref{lem:count} with respect to $F$. By Lemma \ref{lem:strong_reg} we have $|U_i| \geq (\gamma\delta)^C n$ for every $1 \leq i \leq q$. Thus, by Lemma \ref{lem:count}, applied with $\lambda = (\gamma\delta)^C$, a sample of
	$s := 9q(\delta\gamma)^{-C} \leq 9\delta^{-2C}\gamma^{-C}$ vertices from $G$ contains an induced copy of $F$, and hence does not satifsy
	${\cal P}^*_{\cal F}$, with probability at least $\frac{2}{3}$. It is easy to see that
	$s \leq \varepsilon^{-c}$, provided that $c = c(\F)$ is large enough.
	
	It thus remains to show that $F$ does not satisfy ${\cal P}^*_{\cal F}$. Assume, by contradiction, that $F$ satisfies ${\cal P}^*_{\F}$. Since
	${\cal P}^*_{\F}$ has the blowup quality (recall Definition \ref{def:blowup_property}), there is a function
	$g : V(F) \rightarrow \{0,1\}$ such that every $g$-blowup of $F$ satisfies ${\cal P}^*_{\F}$. Now let $G'$ be the graph obtained from $G$ by making the following changes.
\begin{enumerate}
\item For every $1 \leq i \leq q$, if
$g(i) = 1$ then turn $Q_i$ into a clique and if $g(i) = 0$ then turn $Q_i$ into an independent set.
\item For every $1 \leq i < j \leq q$, if $(i,j) \in E(F)$ then turn $(Q_i,Q_j)$ into a complete bipartite graph and if $(i,j) \notin E(F)$
then turn $(Q_i,Q_j)$ into an empty bipartite graph.
\end{enumerate}
Since $G'$ is a $g$-blowup of $F$ (see Definition \ref{def:f_blowup}), $G'$ satisfies ${\cal P}^*_{\F}$.
We now show that the number of edge-changes made in items 1-2 is less than $\varepsilon n^2$, which will stand in contradiction with the fact that $G$ is $\varepsilon$-far from satisfying ${\cal P}^*_{\cal F}$.

By the definitions of $F$ and $G'$ we have the following: for every
$1 \leq i < j \leq q$, if $(Q_i,Q_j)$ is complete in $G'$ then
$d_G(U_i,U_j) \geq 1 - \gamma$, and if $(Q_i,Q_j)$ is empty in $G'$ then $d_G(U_i,U_j) \leq \gamma$.
Define $\N$ to be the set of pairs
$1 \leq i < j \leq q$ for which either (a) $(Q_i,Q_j)$ is not $\delta$-homogeneous, or (b)
the dominant value of $(U_i,U_j)$ is not the same as that of $(Q_i,Q_j)$.
Note that if $(i,j) \notin \N$ then $(Q_i,Q_j)$ is $\delta$-homogeneous (in $G$) with the same dominant value as $(U_i,U_j)$, implying that the number of edge-changes between $Q_i$ and $Q_j$ is at most $\delta |Q_i||Q_j|$.
By Lemma \ref{lem:strong_reg} we have
$\left| \N \right| \leq \delta q^2$, implying that
$ \sum_{(i,j) \in \N}{|Q_i||Q_j|} \leq 2\delta n^2$.
Thus, the overall number of edge-changes made in item 2 above is at most
$\sum_{i < j}{\delta |Q_i||Q_j|} + \sum_{(i,j) \in \N}{|Q_i||Q_j|} \leq \delta n^2 + 2\delta n^2 = 3\delta n^2$. Finally, since
$q \geq 1/\delta$ (see Lemma \ref{lem:strong_reg}), the number of edge-changes made in item $1$ is at most
$q \binom{\left\lceil n/q \right\rceil}{2} < \nolinebreak n^2/q \leq \nolinebreak \delta n^2$. Thus, the overall number of edge-changes made in items 1-2 is less than $4\delta n^2 = \varepsilon n^2$, as required.
\end{proof}
	
\subsection{Proofs of Auxiliary Lemmas}\label{subsec:aux_lemmas}
In this subsection we prove Lemmas \ref{lem:obstruction},  \ref{lem:strong_reg}, \ref{lem:almost_hom_pairs} and \ref{lem:count}, starting with the latter.

\begin{proof} [Proof of Lemma \ref{lem:count}]
For each $1 \leq i \leq r$, sample a vertex $w_i \in W_i$ uniformly at random. For every $1 \leq i < j \leq r$, the assumption of the lemma implies that with probability at least $1 - 1/2r^2$, $(w_i,w_j) \in E(G)$ whenever $(i,j) \in E(F)$ and $(w_i,w_j) \notin E(G)$ whenever $(i,j) \notin E(F)$. By the union bound over all pairs $1 \leq i < j \leq r$ we get the following: With probability at least
$1 - \binom{r}{2}/2r^2 \geq \frac{3}{4}$, the set $\{w_1,\dots,w_r\}$ spans an induced copy of $F$ in which $w_i$ plays the role of $i$.

Now let $S \in \binom{V(G)}{s}$ be a random subset of size
$|S| = s = 9r/\lambda$ and let $\A$ be the event that
$S \cap W_i \neq \emptyset$ for every $1 \leq i \leq r$. In the previous paragraph we proved that conditioned on $\A$ happening, $G[S]$ contains an induced copy of $F$ with probability at least $\frac{3}{4}$. Hence, in order to finish the proof it is enough to show that
$\mathbb{P}[\A^c] \leq \frac{1}{9}$.
For $1 \leq i \leq r$, the probability that $S \cap W_i = \emptyset$ is
$\binom{n - |W_i|}{s}/\binom{n}{s} \leq (1 - \lambda)^s \leq e^{-\lambda s} \leq \frac{1}{9r}$.
Here we used the assumption $|W_i| \geq \lambda n$ and our choice of $s$. By the union bound over all $1 \leq i \leq r$ we get that
$\mathbb{P}[\A^c] \leq r \cdot \frac{1}{9r} = \frac{1}{9}$, as required.
\end{proof}

We now prove Lemma \ref{lem:obstruction}. In the proof we use the following well-known variant of Ramsey's theorem.
\begin{claim}[\cite{Bollobas}]\label{claim:Ramsey}
Every graph on $4^{k}$ vertices contains a homogeneous subset on $k$ vertices.
\end{claim}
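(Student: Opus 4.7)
The plan is to carry out the standard iterative neighborhood-splitting argument that underlies the Erd\H{o}s--Szekeres bound $R(k,k)\le 4^{k-1}$; since $4^k\ge 4^{k-1}$, the claim will follow.

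Set $V_0=V(G)$, so $|V_0|=4^k$. I build sequences $v_1,v_2,\dots$ and $V_0\supseteq V_1\supseteq \cdots$ together with labels $\ell_1,\ell_2,\dots\in\{+,-\}$ as follows. At step $i\ge 1$, pick an arbitrary $v_i\in V_{i-1}$, and split $V_{i-1}\setminus\{v_i\}$ into its neighbors and non-neighbors with respect to $v_i$. Let $V_i$ be the larger of the two parts, and set $\ell_i=+$ in the first case and $\ell_i=-$ in the second. By construction $|V_i|\ge(|V_{i-1}|-1)/2$, which rearranges to $|V_i|+1\ge(|V_{i-1}|+1)/2$, so after $t=2k-2$ steps
\begin{equation*}
|V_{2k-2}|+1\ \ge\ \frac{|V_0|+1}{2^{2k-2}}\ =\ \frac{4^k+1}{4^{k-1}}\ >\ 4,
\end{equation*}
and in particular $V_{2k-2}$ is nonempty, so the process is well-defined.

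Next I apply pigeonhole to the $2k-2$ labels $\ell_1,\dots,\ell_{2k-2}$: at least $k-1$ of them are equal; by symmetry assume they are all $+$, at indices $i_1<\cdots<i_{k-1}$. The key nesting property is that for every $j$, $V_{i_j}$ is the set of \emph{neighbors} of $v_{i_j}$ inside $V_{i_j-1}$, and $v_{i_{j'}}\in V_{i_{j'}-1}\subseteq V_{i_j}$ whenever $j<j'$; hence $v_{i_j}v_{i_{j'}}\in E(G)$, so $\{v_{i_1},\dots,v_{i_{k-1}}\}$ spans a clique. Finally, pick any $u\in V_{2k-2}\subseteq V_{i_j}$ for every $j$; the same nesting gives $uv_{i_j}\in E(G)$ for all $j$, so $\{u,v_{i_1},\dots,v_{i_{k-1}}\}$ is a homogeneous set (clique) of size $k$. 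If the majority label is $-$ instead of $+$, the identical argument produces a homogeneous independent set of size $k$.

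There is no real obstacle in this proof; the only thing worth double-checking is the recurrence $|V_i|+1\ge(|V_{i-1}|+1)/2$ (which is tight when $|V_{i-1}|$ is even) and the fact that it leaves $V_{2k-2}$ nonempty starting from $|V_0|=4^k$, so that the auxiliary vertex $u$ exists. Everything else is immediate from the construction.
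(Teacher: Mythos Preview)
Your argument is correct: this is the standard Erd\H{o}s--Szekeres neighborhood-splitting proof of the bound $R(k,k)\le 4^{k-1}$, and all the bookkeeping checks out (the recurrence $|V_i|+1\ge(|V_{i-1}|+1)/2$ indeed leaves $|V_{2k-2}|\ge 4$, so the final vertex $u$ exists and is distinct from the $v_{i_j}$). The paper itself does not prove this claim at all---it simply quotes it with a reference to \cite{Bollobas}---so there is no ``paper's own proof'' to compare against; you have supplied exactly the classical argument the citation points to. One minor quibble: your parenthetical that the recurrence is tight ``when $|V_{i-1}|$ is even'' is not quite accurate (it can be tight for odd $|V_{i-1}|$ as well), but this is irrelevant to the validity of the proof.
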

	
	\begin{proof}[Proof of Lemma \ref{lem:obstruction}]
	Let $F_1,F_2,F_3 \in \F$ be such that $F_1$ is bipartite, $F_2$ is co-bipartite and $F_3$ is split. Write $V(F_1) = P_1 \cup Q_1$, $V(F_2) = P_2 \cup Q_2$, $V(F_3) = P_3 \cup Q_3$, where $P_1,Q_1,P_3$ are independent sets and $P_2,Q_2,Q_3$ are cliques. Put
	\begin{equation*} 
	f = v(F_1) + v(F_2) + 2v(F_3),
	\end{equation*}
	and fix an integer $k$ that is divisible by $f$. We will assume that $k$ is large enough where needed. Let $S$ and $T$ be disjoint vertex-sets of size $k$ each and let $H = (S \cup T, E)$ be a random bipartite graph with sides $S$ and $T$; that is, for each $s \in S, t \in T$, the edge $(s,t)$ is added to $H$ randomly and independently with probability $\frac{1}{2}$. We will show that with positive probability, $H$ is a bipartite obstruction for $\F$, thus proving the lemma.
	Throughout the proof we set
	\begin{equation*} 
	r = \left\lfloor \frac{k - 4^f}{f} \right\rfloor.
	\end{equation*}
An {\em $(f,r)$-family} is a pair $\Q = (\Q_S,\Q_T)$, where
$\Q_S = \{S_1,...,S_{r}\}$ (resp. $\Q_T = \{T_1,...,T_{r}\}$) is a collection of pairwise-disjoint
subsets of $S$ (resp. $T$) of size $f$; that is, $|S_i| = |T_i| = f$ for every $1 \leq i \leq r$. The number of ways to choose an $(f,r)$-family is exactly
\begin{equation}\label{eq:k_partition_count}
\left( \frac{k!}{r! (f!)^{r}(m-fr)!} \right)^2 \leq k^{2k}.
\end{equation}

We need the following variant of Definition \ref{def:bp_copy}. Let $F$ and $H$ be graphs and let $V(F) = P \cup Q$ and
$V(H) = S \cup T$ be vertex-partitions. A {\em copy} of $F[P,Q]$ in $H[S,T]$ is an injection $\varphi : V(F) \rightarrow V(H)$ such that
$\varphi(P) \subseteq S$, $\varphi(Q) \subseteq T$ and for every
$p \in P$ and $q \in Q$ we have $(p,q) \in E(F)$ if and only if
$(\varphi(p),\varphi(q)) \in E(H)$. Note that there is no restriction on the edges inside $\varphi(P)$ and $\varphi(Q)$.
		
Let $\Q$ be an $(f,r)$-family.
For every
$(i,j) \in \left[ r \right] \times \left[ r \right]$, let $A_{\Q}(i,j)$ be the event that $H[S_i,T_j]$ contains copies of $F_1[P_1,Q_1]$, $F_2[P_2,Q_2]$, $F_3[P_3,Q_3]$ and $F_3[Q_3,P_3]$. The following fact follows immediately from the definition of $A_{\Q}(i,j)$.
\begin{observation}\label{obs:copy}
Let $H'$ be a completion of $H$. Suppose that $S_i$ and $T_j$ are homogeneous sets in $H'$ and that $A_{\Q}(i,j)$ happened. Then $H'$ is not induced $\F$-free.
\end{observation}
\begin{proof}
If $S_{i},T_{j}$ are independent sets then
$H'[S_{i} \cup T_{j}]$ contains an induced copy of $F_1$. If $S_{i},T_{j}$ are cliques then $H'[S_{i} \cup T_{j}]$ contains an induced copy of $F_2$. Finally, if $S_{i}$ is a clique and $T_{j}$ is an independent set or vice versa, then $H'[S_{i} \cup T_{j}]$ contains an induced copy of $F_3$.
\end{proof}

Since $|S_i| = |T_j| = f = v(F_1) + v(F_2) + 2v(F_3)$, there is a bipartite graph with sides $S_i,T_j$ that contains copies of $F_1[P_1,Q_1]$, $F_2[P_2,Q_2]$, $F_3[P_3,Q_3]$ and $F_3[Q_3,P_3]$. This implies that
$\Pr\left[ A_{\Q}(i,j) \right] \geq \nolinebreak 2^{-f^2}$. Since the events
$\left\{ A_{\Q}(i,j) : i,j \in  \left[ r \right]\right\}$ are independent,
the probability that $A_{\Q}(i,j)$ did not happen for any
$(i,j) \in [r] \times [r]$ is at most
$\big( 1 - 2^{-f^2} \big)^{r^2} \leq e^{-2^{-f^2}r^2} < k^{-2k}$,
with the rightmost inequality holding if $k$ is large enough. By (\ref{eq:k_partition_count}), there are at most $k^{2k}$ ways to choose an $(f,r)$-family $\Q$. By the union bound over all $(f,r)$-families we get that the following holds with positive probability: for every $(f,r)$-family $\Q$, the event $A_{\Q}(i,j)$ happened for at least one $(i,j) \in [r] \times [r]$. We now show that under this condition, $H$ is a bipartite obstruction for $\F$.

Let $H'$ be a completion of $H$. By repeatedly applying Claim \ref{claim:Ramsey} we extract from $S$ pairwise-disjoint homogeneous $f$-sets $S_1,S_2,\dots,S_r$. This is possible due to our choice of $r$.
Similarly, we extract from $T$ pairwise-disjoint homogeneous $f$-sets $T_1,T_2,\dots,T_r$. Put
$\Q_S = \{S_1,...,S_r\}$ and $\Q_T = \{T_1,...,T_r\}$ and consider the $(f,r)$-family $\Q = (\Q_S,\Q_T)$. By our assumption, there is
$(i,j) \in [r] \times [r]$ for which $A_{\Q}(i,j)$ happened.
Since $S_i$ and $T_j$ are homogeneous in $H'$, Observation \ref{obs:copy} implies that $H'$ is not induced $\F$-free.
This completes the proof of the lemma.
\end{proof}
	
	We now get to the proofs of Lemmas \ref{lem:strong_reg} and \ref{lem:almost_hom_pairs}. These lemmas are proved using a ``conditional regularity lemma'' due to Alon, Fischer and Newman \cite{bipartite}. In order to state this lemma we first need some additional definitions. Let $A$ be an $n \times n$ matrix with $0/1$ entries whose rows and columns are indexed by $1,...,n$. For two sets $R,C \subseteq [n]$, the {\em block} $R \times C$ is the submatrix of $A$ whose rows are the elements of $R$ and whose columns are the elements of $C$. The
	{\em dominant value} of a block is the value, $0$ or $1$, that appears in at least half of the entries. For
	$\delta \in (0,\frac{1}{2})$, we say that a block is {\em $\delta$-homogeneous} if its dominant value appears in at least a $(1 - \delta)$-fraction of the entries. If $A$ is the adjacency matrix of a graph $G$ then for every pair of disjoint sets
	$X,Y \subseteq V(G)$, the pair $(X,Y)$ is $\delta$-homogeneous (in the graph sense) if and only if the block $X \times Y$ is $\delta$-homogeneous (in the matrix sense).
	The weight of a block $R \times C$ is defined as $\frac{|R||C|}{n^2}$. Let
	$\mathcal{R} = \{R_1,...,R_s\}$ and $\mathcal{C} = \{C_1,...,C_t\}$ be partitions of $[n]$. We say that $(\mathcal{R},\mathcal{C})$ is a {\em $\delta$-homogeneous} partition of $A$ if
	the total weight of $\delta$-homogeneous blocks $R_i \times C_j$ is at least $1 - \delta$.
	
	Let $B$ be a $0/1$-valued $k \times k$ matrix. A {\em copy} of $B$ in $A$ is a sequence of rows $r_1 < r_2 < \dots < r_k$ and a sequence of columns $c_1 < c_2 < \dots < c_k$ such that $A_{r_i,c_j} = B_{i,j}$ for every
	$1 \leq i,j \leq k$. We are now ready to state the Alon-Fischer-Newman Regularity Lemma.
	\begin{lemma}[{\bf Alon-Fischer-Newman \cite{bipartite}}] \label{lem:AFN}
	Let $k \geq 1$ be an integer and let $\delta \in (0,\frac{1}{2})$. Then for every $0/1$-valued matrix $A$ of size $n \times n$ with $n > \left( k / \delta \right)^{ck}$, either $A$ has a $\delta$-homogeneous partition $(\R,\C)$ with $|\R|,|\C| \leq \left( k / \delta \right)^{c k}$ or for every $0/1$-valued
	$k \times k$ matrix $B$, $A$ contains at least
	$\left( \delta / k \right)^{c k^2} n^{2k}$ copies of $B$. Here $c$ is an absolute constant.
	\end{lemma}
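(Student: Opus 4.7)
\medskip

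\noindent\textbf{Proof proposal.} The plan is to prove Lemma~\ref{lem:AFN} by an iterative partition-refinement procedure, with two features that are non-standard compared to the classical Szemer\'edi regularity lemma: (i) the refinement is driven by \emph{row/column samples} rather than by density decomposition, which is what makes the partition size polynomial rather than tower-type in $1/\delta$; and (ii) each refinement step simultaneously either delivers the desired $\delta$-homogeneous partition or accumulates combinatorial structure that directly produces many copies of an arbitrary $k\times k$ pattern $B$.

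Concretely, I would maintain a pair of partitions $(\R,\C)$ of the rows and columns, starting from the trivial one. At each stage, if $(\R,\C)$ is already $\delta$-homogeneous of size at most $(k/\delta)^{ck}$, output it. Otherwise the total weight of non-$\delta$-homogeneous blocks $R_i\times C_j$ exceeds $\delta$, and in each such block at least a $\delta$-fraction of entries equal $1$ and at least a $\delta$-fraction equal $0$. A simple averaging argument shows that at least $\Omega(\delta|C_j|)$ columns $c\in C_j$ are \emph{splitting columns for $R_i$}, meaning both $|\{r\in R_i:A_{r,c}=1\}|$ and $|\{r\in R_i:A_{r,c}=0\}|$ are at least $\mathrm{poly}(\delta)\cdot|R_i|$. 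Refine $\R$ by picking $t=\Theta(k\log(k/\delta))$ such splitting columns (sampled across many non-homogeneous blocks) and partitioning each $R_i$ according to the joint $0/1$ pattern on these columns. Do the symmetric refinement on $\C$. This multiplies $|\R|$ and $|\C|$ by at most $2^t=(k/\delta)^{O(k)}$ per step.

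To bound the number of iterations, I would use the mean-square-density potential $\Phi(\R,\C)=\sum_{i,j}\tfrac{|R_i||C_j|}{n^2}\,d(R_i,C_j)^2\in[0,1]$. A convexity/defect-Cauchy-Schwarz calculation shows that the refinement step above increases $\Phi$ by $\Omega(\mathrm{poly}(\delta))$, so the procedure halts within $O(\mathrm{poly}(1/\delta))$ rounds; crucially, only a \emph{constant} number of rounds are needed to reach $(k/\delta)^{ck}$ parts (rather than iterated exponentials) because each round already multiplies the partition size by the polynomial factor $(k/\delta)^{O(k)}$ that we can afford in the final bound.

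For the second alternative in the lemma, I would argue that if the procedure \emph{fails} to terminate with a small $\delta$-homogeneous partition, then in the current partition $(\R,\C)$ there is a collection of at least $(\delta/k)^{O(k)}$ weight of non-homogeneous blocks, each with both density bounded away from $0$ and $1$. Fix any $k\times k$ matrix $B$. To count copies of $B$, I would choose $k$ row-classes $R_{i_1},\dots,R_{i_k}$ and $k$ column-classes $C_{j_1},\dots,C_{j_k}$ such that every block $R_{i_a}\times C_{j_b}$ is non-homogeneous, then inside each block independently select an entry equal to $B_{a,b}$; an averaging over orderings and a union bound give $(\delta/k)^{ck^2}n^{2k}$ ordered copies as required. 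The main obstacle in executing this plan rigorously will be the second alternative: ensuring simultaneously that (a) the refinement step's splitting-columns exist in enough non-homogeneous blocks to drive $\Phi$ up by the claimed amount, and (b) the collection of non-homogeneous blocks at the end of a failing execution is structured enough (both in row-side and column-side classes) to realize every $B$ with strictly increasing row/column indices. I expect (b) to require an additional random-ordering step or a greedy selection of diverse $(i_a,j_b)$ pairs, at the cost of a slightly worse exponent in $k$ which is absorbed by the constant $c$.
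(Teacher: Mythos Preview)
First, note that the paper does not prove Lemma~\ref{lem:AFN}; it is quoted from \cite{bipartite} and used as a black box in the proof of Lemma~\ref{lem:reg}. So there is no proof in the present paper to compare against, only the original Alon--Fischer--Newman argument.

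Your energy-increment plan has a genuine quantitative gap. By your own accounting, $\Phi$ rises by only $\Omega(\mathrm{poly}(\delta))$ per round, so homogeneity is reached only after $\mathrm{poly}(1/\delta)$ rounds; since each round multiplies $|\R|,|\C|$ by $(k/\delta)^{O(k)}$, the resulting partition has size $(k/\delta)^{O(k)\cdot\mathrm{poly}(1/\delta)}$, exponential in $1/\delta$ rather than the required $(k/\delta)^{O(k)}$. The parenthetical ``only a constant number of rounds are needed to reach $(k/\delta)^{ck}$ parts'' conflates reaching a given \emph{size} with reaching \emph{$\delta$-homogeneity}: after $O(1)$ rounds the size may be in range but nothing forces the partition to be homogeneous. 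Your fallback for that case---pick a $k\times k$ grid of non-homogeneous blocks and ``inside each block independently select an entry equal to $B_{a,b}$''---also fails: once rows $r_1<\cdots<r_k$ and columns $c_1<\cdots<c_k$ are fixed, every entry $A_{r_a,c_b}$ is determined, so the $k^2$ ``selections'' are not independent at all, and block-level non-homogeneity says nothing about any single prescribed entry; nor is it clear such a $k\times k$ grid of simultaneously non-homogeneous blocks even exists. This is precisely why the lemma is non-trivial: a generic energy/defect argument only yields weak-regularity bounds of shape $2^{\mathrm{poly}(1/\delta)}$. The actual Alon--Fischer--Newman proof is not an energy increment; roughly, it shows that if \emph{some} fixed $k\times k$ pattern has few copies in $A$, then the rows of $A$ (and symmetrically the columns) cluster, up to Hamming distance $O(\delta n)$, into at most $(k/\delta)^{O(k)}$ types, and partitioning by type is $\delta$-homogeneous in one shot.
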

 The next lemma is an application of Lemma \ref{lem:AFN} to adjacency matrices of graphs. It is the main tool used in the proofs of Lemmas \ref{lem:strong_reg} and \ref{lem:almost_hom_pairs}.
\begin{lemma}\label{lem:reg}
For every $k \geq 1$ there is $D = D_{\ref{lem:reg}}(k)$ such that the following holds for every bipartite graph $H = (S \cup T, E)$ with
$|S| = |T| = k$. Let $\delta \in (0,\frac{1}{2})$ and let $P \geq 1$ be an integer. Let $G$ be a graph on $n \geq n_0\left(k,\delta,P\right)$ vertices and let ${\cal P}$ be an equipartition of $V(G)$ with $p$ parts, where $p \leq P$. Then either $G$ contains at least $\delta^{D} n^{2k}$ induced bipartite copies of $H$ or $G$ admits a $\delta$-homogeneous equipartition $\Q$ which refines
${\cal P}$ and satisfies $\delta^{-1} \leq |\Q| \leq p \cdot \delta^{-D}$.
\end{lemma}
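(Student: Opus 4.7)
\emph{Plan.} I would apply Lemma \ref{lem:AFN} to the adjacency matrix $A$ of $G$ with auxiliary parameter $\delta':=\delta^{2}/100$ and with $B$ the $k\times k$ bipartite adjacency matrix of $H$ under fixed labelings of $S$ and $T$. In the ``many copies'' outcome, $A$ has at least $(\delta'/k)^{ck^{2}}n^{2k}$ copies of $B$. Each copy consists of sorted tuples $r_{1}<\cdots<r_{k}$ and $c_{1}<\cdots<c_{k}$ with $A_{r_{i},c_{j}}=B_{i,j}$, and whenever the $r_{i}$'s are disjoint from the $c_{j}$'s, the map sending the $i$-th vertex of $S$ to $r_{i}$ and the $j$-th of $T$ to $c_{j}$ is an induced bipartite copy of $H$. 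The copies with row-column overlap number at most $k^{2}n^{2k-1}$, so for $n\ge n_{0}(k,\delta)$ at least $\delta^{D}n^{2k}$ of the copies yield genuine induced bipartite copies of $H$, for a sufficiently large $D=D(k)$.

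\emph{Other outcome.} Otherwise Lemma \ref{lem:AFN} provides a $\delta'$-homogeneous row-column partition $(\R,\C)$ with $|\R|,|\C|\le(k/\delta')^{ck}$. I form the common vertex refinement $\D:=\R\wedge\C\wedge\mathcal{P}$, bounded by $|\D|\le p(k/\delta')^{2ck}$, and pick $q$ near $10|\D|/\delta$ (specifically, a multiple of $p$ in this range for which the standard size-$\{m,m+1\}$ equipartition construction is realizable once $n$ is large; $m:=\lfloor n/q\rfloor$). I construct $\Q$ greedily: inside each $P_{j}\in\mathcal{P}$, split every atom $D\subseteq P_{j}$ of $\D$ into as many size-$m$ parts as possible, then collect the per-atom leftovers (each of size $<m$) within the same $P_{j}$ into ``bad'' parts, tuning sizes to $\{m,m+1\}$ by the standard slackness argument. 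The resulting $\Q$ is an equipartition of $V(G)$ refining $\mathcal{P}$ with $q$ parts, and the number of vertices in ``bad'' parts (those not contained in a single $\D$-atom) is at most $(|\D|+p)(m+1)\le\delta n/5$.

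\emph{Verification.} Non-$\delta$-homogeneous pairs in $\Q$ split into three types: (a) pairs touching a bad $\Q$-part, of total weight at most $2\delta/5$; (b) good-good pairs whose enclosing $(\R,\C)$-block $R_{a}\times C_{b'}$ is not $\delta'$-homogeneous, of total weight at most $\delta'$ by the $\delta'$-homogeneity of $(\R,\C)$; and (c) good-good pairs enclosed in a $\delta'$-homogeneous block but still failing $\delta$-homogeneity. For (c), a direct Markov count inside each $\delta'$-homogeneous block shows that the total area of sub-blocks with minority fraction exceeding $\delta$ is at most $(\delta'/\delta)|R_{a}||C_{b'}|$; summing across $\delta'$-homogeneous blocks gives a total weight of at most $\delta'/\delta$. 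With $\delta'=\delta^{2}/100$ the three contributions sum to at most $\delta$, giving $\delta$-homogeneity. Plugging $|\D|\le p(100k/\delta^{2})^{2ck}$ into $q=\Theta(|\D|/\delta)$ gives $q\le p\cdot\delta^{-D}$ for a suitable $D=D(k)$, and $q\ge\delta^{-1}$ is enforced by enlarging $q$ if needed (which only lowers the ``bad'' fraction).

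\emph{Main obstacle.} The delicate point is calibrating $\delta'$: taking $\delta'$ linear in $\delta$ breaks the Markov inheritance in (c), while making $\delta'$ much smaller blows up $|\D|$ and pushes $q$ beyond $p\cdot\delta^{-D}$. The quadratic relation $\delta'=\Theta(\delta^{2})$ is precisely the sweet spot that balances the three error contributions (a), (b), (c) while keeping the final partition polynomial in $p$ and $\delta^{-1}$. The secondary nuisance is the equipartition construction inside each $P_{j}$, which is routine for large $n$ but requires choosing $q$ carefully (as a multiple of $p$) within a $\Theta(1)$ factor of the target.
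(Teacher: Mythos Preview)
Your proposal is correct and follows essentially the same approach as the paper: apply Lemma~\ref{lem:AFN} with parameter $\Theta(\delta^{2})$ (the paper uses $\delta^{2}/4$, you use $\delta^{2}/100$), take the common refinement with $\mathcal{P}$, chop into near-equal parts, and control the three error contributions via the same Markov-inside-each-block argument. The only cosmetic differences are in the equipartition bookkeeping---the paper first deletes $O(q)$ vertices to force exact divisibility and then reinserts them, whereas you work with $\{m,m+1\}$ part sizes directly---and in how $q$ is chosen (the paper fixes $q=p\lfloor\delta^{-D}\rfloor$ up front, you set $q\approx 10|\D|/\delta$); neither affects the substance, and your identification of the $\delta'=\Theta(\delta^{2})$ calibration as the crux is exactly right.
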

As the proof of Lemma \ref{lem:reg} is technical, we leave it to the end of this subsection and first prove Lemmas \ref{lem:strong_reg} and \ref{lem:almost_hom_pairs} using Lemma \ref{lem:reg}.
\begin{proof}[Proof of Lemma \ref{lem:strong_reg}]
Throughout the proof, $D = D_{\ref{lem:reg}}(k)$ is the constant from Lemma \ref{lem:reg}.
We prove the lemma with
\begin{equation*}
C = C_{\ref{lem:strong_reg}}(k) = 23D^2.
\end{equation*}
Let us assume that $G$ contains less than $(\gamma\delta)^{C}n^{2k}$ induced bipartite copies of $H$ and prove that the other alternative in the statement of the lemma holds. We assume that $n$ is large enough where needed. Since $(\delta/6)^{D} \geq \delta^{4D} \geq \delta^{C}$, our assumption implies that $G$ contains less than $(\delta/6)^{D}n^{2k}$ induced bipartite copies of $H$.
By applying Lemma \ref{lem:reg} with approximation parameter $\frac{\delta}{6}$ and ${\cal P} = \{V(G)\}$, we obtain a
$\frac{\delta}{6}$-homogeneous equipartition
$\Q = \{Q_1,...,Q_q\}$ of $G$ with $q$ parts, where
$\delta^{-1} \leq q \leq (6/\delta)^{D} \leq \delta^{-4D} \leq \delta^{-C}$.
Set
\begin{equation*} 
\beta =
\frac{\gamma\delta}{2q^4}.
\end{equation*}
By $q \leq \delta^{-4D}$ we have
$\beta \geq \frac{1}{2}(\gamma\delta)^{16D + 1} \geq
(\gamma\delta)^{18D}$.
Since $\beta^{D} \geq (\gamma\delta)^{18D^2} \geq (\gamma\delta)^C$, our assumption in the beginning of the proof implies that $G$ contains less than
$\beta^{D}n^{2k}$ induced bipartite copies of $H$.
We apply Lemma \ref{lem:reg} to $G$ again, now with approximation parameter $\beta$ and with ${\cal P} = \Q$, to obtain a
$\beta$-homogeneous equipartition $\mathcal{U}$ that refines $\Q$ and satisfies
$\left| \mathcal{U} \right| \leq
\beta^{-D} \left| \Q \right| = \beta^{-D}q \leq (\gamma\delta)^{-22D^2}$.

For each $1 \leq i \leq q$ define
$\mathcal{U}_i = \{U \in \mathcal{U} : U \subseteq Q_i\}$. Sample a vertex $u_i \in Q_i$ uniformly at random and let $U_i \in \U_i$ be such that
$u_i \in U_i$.  By our choice of
$C = C_{\ref{lem:strong_reg}}(k)$, for every $1 \leq i \leq q$ we have
$|U_i| \geq \big\lfloor \frac{n}{\left| \U \right|} \big\rfloor
\geq \frac{n}{2\left| \U \right|} \geq
\frac{1}{2} \cdot (\gamma\delta)^{22D^2}n \geq
(\gamma\delta)^{C}n$, as required.

Let $\A_1$ be the event that all pairs $(U_i,U_j)$ are $\beta$-homogeneous. For a specific pair $1 \leq i < j \leq q$, the probability that $(U_i,U_j)$ is not $\beta$-homogeneous is
\begin{equation*}
\sum{\frac{|U||U'|}{|Q_i||Q_j|}} \leq
\sum{\frac{|U||U'|}{\left\lfloor n/q \right\rfloor ^2}} \leq
2\sum{\frac{|U||U'|}{(n/q)^2}}
\end{equation*}
where the sum is over all
non-$\beta$-homogeneous pairs
$(U,U') \in \mathcal{U}_i \times \mathcal{U}_j$. This sum is not larger than $2\beta q^2 \leq 1/q^2$ because $\mathcal{U}$ is $\beta$-homogeneous and by our choice of $\beta$.
By the union bound over all pairs
$1 \leq i < j \leq q$ we get that
$\mathbb{P}\left[ \A_1 \right] \geq \frac{1}{2}$. Notice that as
$\beta \leq \gamma,\delta$, if a pair is $\beta$-homogeneous then it is also $\gamma$-homogeneous and $\delta$-homogeneous.

Let $1 \leq i < j \leq q$ for which the pair $(Q_i,Q_j)$ is $\frac{\delta}{6}$-homogeneous. We say that $(Q_i,Q_j)$ is {\em bad} if either
$d(Q_i,Q_j) \geq 1 - \frac{\delta}{6}$ and $d(U_i,U_j) \leq \delta$, or $d(Q_i,Q_j) \leq \frac{\delta}{6}$ and $d(U_i,U_j) \geq 1 - \delta$. Otherwise $(Q_i,Q_j)$ is {\em good}. Let $Z$ be the number of bad pairs $(Q_i,Q_j)$.
Let $\A_2$ be the event that $Z \leq \frac{2 \delta}{3}q^2$.
Consider any $\frac{\delta}{6}$-homogeneous pair $(Q_i,Q_j)$ and assume wlog that $d(Q_i,Q_j) \geq 1 - \frac{\delta}{6}$. Then the probability that $d(U_i,U_j) \leq \delta$ is at most
$\frac{\delta/6}{1 - \delta} < \frac{\delta}{3}$. Therefore
$\mathbb{E}[Z] < \frac{\delta}{3}q^2$ and
by Markov's Inequality
$\mathbb{P}\left[ Z > \frac{2\delta}{3}p^2 \right] < \frac{1}{2}$. This implies that $\mathbb{P}[\A_2] > \frac{1}{2}$.

Since $\mathbb{P}[A_1 \cap A_2] > 0$, there is a choice of $U_1,\dots,U_q$ for which $\A_1$ and $\A_2$ happened.
Since $\Q$ is $\frac{\delta}{6}$-homogeneous, the number of pairs
$1 \leq i < j \leq q$ for which $(Q_i,Q_j)$ is not $\frac{\delta}{6}$-homogeneous is at most $\frac{\delta}{3}q^2$.
Therefore, by the definition of $\A_2$, all but at most at most
$\frac{\delta}{3}q^2 + \frac{2\delta}{3}q^2 = \delta q^2$ of the pairs $(Q_i,Q_j)$ are $\delta$-homogeneous and good. By the definition of $\A_1$, $(U_i,U_j)$ is $\beta$-homogeneous (and hence also $\gamma$-homogeneous and $\delta$-homogeneous) for every $1 \leq i < j \leq q$. If $(U_i,U_j)$ is $\delta$-homogeneous and $(Q_i,Q_j)$ is good then the dominant value of $(U_i,U_j)$ is the same as that of $(Q_i,Q_j)$.
\end{proof}
\begin{proof}[Proof of Lemma \ref{lem:almost_hom_pairs}]
We prove the lemma with
\begin{equation*}
C = C_{\ref{lem:almost_hom_pairs}}(k) = 7D,
\end{equation*}
where $D$ is the constant $D_{\ref{lem:reg}}(k)$ from Lemma \ref{lem:reg}.
Set
\begin{equation}\label{eq:param_beta}
\beta = \alpha 4^{-m-2}.
\end{equation}

If $G$ contains $\beta^D n^{2k}$ induced bipartite copies of $H$ then the assertion of the lemma holds, since
$\beta^D \geq \alpha^{D}2^{-6mD} > \alpha^C 2^{-Cm}$.
Assume then that $G$ contains less than $\beta^D n^{2k}$ induced bipartite copies of $H$.
By Lemma \ref{lem:reg}, applied with approximation parameter $\beta$ and
${\cal P} = \{V(G)\}$, $G$ admits a $\beta$-homogeneous equipartition
$V(G) = W_1 \cup \dots W_w$ with
$w \leq \beta^{-D}$ parts. Note that for every $1 \leq i \leq w$ we have
$|W_i| \geq \left\lfloor n/w \right\rfloor \geq \beta^{D}n/2 \geq
\alpha^C 2^{-Cm}n$, as required.
	
Define an auxiliary graph $R$ on the set $[w]$ in which $(i,j)$ is an edge if and only if the pair $(W_i,W_j)$ is $\beta$-homogeneous. Since $\{W_1,\dots,W_w\}$ is a $\beta$-homogeneous partition, we have
$$e(R) \geq \binom{w}{2} - 2\beta w^2 \geq
\binom{w}{2} - 4^{-m-1}w^2 >
\left( 1 - \frac{1}{4^m - 1} \right)\frac{w^2}{2}.$$
Here we use the inequality $w \geq \frac{1}{\beta} \geq 4^{m+1}$ (see Lemma \ref{lem:reg}).
	
By Turan's Theorem (see, for example, \cite{Bollobas}), $R$ contains a clique $K$ of size $4^m$; assume wlog that
$K = \{1,...,4^m\}$. For every $1 \leq i < j \leq 4^m$, the pair $(W_i,W_j)$ is $\beta$-homogeneous and hence also $\alpha$-homogeneous. Define a new graph on $K$ as follows: put an edge between $i$ and $j$ if and only if
$d(W_i,W_j) \geq 1 - \alpha$ (the other option being that $d(W_i,W_j) \leq \alpha$). By Ramsey's theorem (see Claim \ref{claim:Ramsey}), this graph contains a homogeneous set of size $m$, which we assume, wlog, to be $\{1,\dots,m\}$. Then either
$d(W_i,W_j) \geq 1 - \alpha$ for every
$1 \leq i < j \leq m$ or $d(W_i,W_j) \leq \alpha$ for every
$1 \leq i < j \leq m$, depending on whether $\{1,...,m\}$ is a clique or an independent set.
\end{proof}
\begin{proof}[Proof of Lemma \ref{lem:reg}]
We prove the lemma with
\begin{equation*}
D = D_{\ref{lem:reg}}(k) = 18ck^3,
\end{equation*}
where $c$ is the absolute constant from Lemma \ref{lem:AFN}.
We assume that $G$ contains less than
$\delta^D n^{2k}$ induced bipartite copies of $H$ and prove that $G$ admits a $\delta$-homogeneous equipartition which refines ${\cal P}$ and has
\begin{equation*} 
q :=
p \cdot \left\lfloor \delta^{-D} \right\rfloor
\end{equation*}
parts. Clearly $\delta^{-1} \leq q \leq
p \cdot \delta^{-D}$, as required.
Write ${\cal P} = \{V_1,...,V_p\}$. By removing at most
$\frac{q}{p}$ vertices from each set $V_i$, we obtain sets
$V'_i \subseteq V_i$ such that
$\left| V'_1 \right| = \left| V'_2 \right| \dots = \left| V'_p \right|$ and $\left| V'_i \right|$ is divisible by $\frac{q}{p}$. Let $G'$ be the
subgraph of $G$ induced by
$V'_1 \cup \dots \cup V'_{p}$.
Set
$m = |V(G')| \geq n - q$ and assume, wlog, that
$V(G') = [m]$.	
We claim that in order to complete the proof it is enough to find a $\frac{\delta}{2}$-homogeneous equipartition $\Q'$ of $G'$ into $q$ equal parts which refines ${\cal P}' := \{V'_1,\dots,V'_p\}$.
Indeed, given this equipartition $\Q'$, we can obtain a $\delta$-homogeneous equipartition $\Q$ of $G$ which refines ${\cal P}$ by doing the following: for each $i = 1,\dots,p$, we distribute the (at most $\frac{q}{p}$) vertices in $V_i \setminus V'_i$ as equally as possible among the $\frac{q}{p}$ parts of $\Q'$ which are contained in $V'_i$. Notice that if $n$ is large enough (as a function of $q$) then $\Q$ would be $\delta$-homogeneous.
Hence, from now on our goal is to prove that $G'$ admits a $\frac{\delta}{2}$-homogeneous equipartition into $q$ equals parts which refines ${\cal P}'$.

Let $A = A(G')$ be the adjacency matrix of $G'$.
Let $B$ be the {\em bipartite adjacency matrix} of $H$; that is, $B$ is a $k \times k$ matrix, indexed by $S \times T$, in which $B_{x,y} = 1$ if $(x,y) \in E(H)$ and $B_{x,y} = 0$ otherwise. Suppose first that $A$ contains $\left(\delta^2 / 4k \right)^{ck^2} {m}^{2k}$ copies of $B$.
A copy of $B$ which does not intersect the main diagonal of $A$ corresponds to an induced bipartite copy of $H$ in $G'$. There are $O(m^{2k-1})$ copies of $B$ which intersect the main diagonal of $A$. Using
$m \geq n - q$ and $\delta < \frac{1}{2}$ and assuming $n$ to be large enough, we conclude that $G'$ (and hence $G$) contains at least
$$\left(\delta^2 / 4k \right)^{ck^2} {m}^{2k} - O(m^{2k-1}) \geq
\left(\delta^2 / 4k \right)^{2ck^2} {n}^{2k} \geq
\delta^{12ck^3} n^{2k} \geq \delta^{D}n^{2k}$$
induced bipartite copies of $H$, in contradiction to our assumption in the beginning of the proof.

Thus, $A'$ contains less than
$\left(\delta^2 / 4k \right)^{ck^2} {m}^{2k}$ copies of $B$.
By Lemma \ref{lem:AFN}, applied with approximation parameter $\frac{\delta^2}{4}$, $A$ admits a
$\frac{\delta^2}{4}$-homogeneous partition $(\R,\C)$ with
$|\R|,|\C| \leq \left( 4k / \delta^2 \right)^{ck}$. Write
$\mathcal{R} = \{R_1,...,R_s\}$ and
$\mathcal{C} = \{C_1,...,C_t\}$. We will use the following inequality, which follows from our choice of $q$ and
$\delta < \frac{1}{2}$.
\begin{equation}\label{eq:part_size_bound}
8pst\delta^{-2} \leq
8p\left( 4k / \delta^2 \right)^{2ck}\delta^{-2}
\leq
p \delta^{-17ck^2} \leq
q.
\end{equation}
	
Recall that
$\left| V'_1 \right| = \left| V'_2 \right| \dots = \left| V'_p \right| = \frac{m}{p}$ and that $\frac{m}{p}$ is divisible by $\frac{q}{p}$. Therefore, each of the sets $V'_{\ell}$ can be partitioned into equal parts of size $\frac{m}{q}$. For every $\ell = 1,\dots,p$, let $\U'_{\ell}$ be the common refinement of the set $V'_{\ell}$ and the partitions $\mathcal{R}$, $\mathcal{C}$, that is
$ \U'_{\ell} = \{V'_{\ell} \cap R_i \cap C_{j} : 1 \leq i \leq s, 1 \leq j \leq t\}.$
As mentioned above, we assume that $V(G') = [m]$ so that the intersection $V'_{\ell} \cap R_i \cap C_j$ makes sense.
For each $U \in \U'_{\ell}$, partition $U$ arbitrarily into parts of size $\frac{m}{q}$ and an additional part $Z_{\ell,U}$ of size less than $\frac{m}{q}$. Let $Z_{\ell}$ be the union of all additional parts $Z_{\ell,U}$, $U \in \U'_{\ell}$. By (\ref{eq:part_size_bound}) we have
\begin{equation}\label{eq:additional_part_bound}
\left| Z_{\ell} \right| < st \frac{m}{q} \leq
\frac{\delta^2 m}{8p}.
\end{equation}
Next, partition $Z_{\ell}$ arbitrarily into parts of size $\frac{m}{q}$. Let  $\Q'_{\ell}$ be the resulting equipartition of $V'_{\ell}$ and put
$\Q' := \bigcup_{{\ell}=1}^{p}{\Q'_{\ell}}$. Then $\Q'$ is an equipartition of $V(G')$ into $q$ equals parts (each of size $\frac{m}{q}$) which refines ${\cal P}' = \{V'_1,\dots,V'_p\}$.
	
To finish the proof, we show that $\Q'$ is $\frac{\delta}{2}$-homogeneous. To this end, let $\N$ be the set of all pairs $(X,Y) \in \Q' \times \Q'$,
$X \neq Y$, which are not $\frac{\delta}{2}$-homogeneous.
Define $Z := \bigcup_{\ell=1}^{p}{Z_\ell}$ and let $\N_1$ be the set of all $(X,Y) \in \N$ such that either $X$ or $Y$ is contained in $Z$. By (\ref{eq:additional_part_bound}) we have
$\left| Z \right| = \sum_{\ell=1}^{p}{|Z_{\ell}|} \leq
\delta^2 m/8$, implying that
\begin{equation}\label{eq:non_hom_parts_count_1}
\sum_{(X,Y) \in \N_1}{|X||Y|} \leq
2m \cdot \left| Z \right| \leq \frac{\delta^2}{4}m^2 .
\end{equation}

By the definition of $\Q'$, for every $X \in \Q'$ which is not contained in $Z$ there are $(i,j) \in [s] \times [t]$ such that
$X \subseteq R_i \cap C_j$. Therefore, for every
$(X,Y) \in \N \setminus \N_1$ there is $(i,j) \in [s] \times [t]$ such that the block $X \times Y$ is contained in the block $R_i \times C_j$. Let $\N_2$ be the set of pairs $(X,Y) \in \N \setminus \N_1$ for which this block
$R_i \times C_j$ (i.e. the block containing $X \times Y$) is $\frac{\delta^2}{4}$-homogeneous. Set
$\N_3 = \N \setminus (\N_1 \cup \N_2)$.
Since $(\mathcal{R},\mathcal{C})$ is a $\frac{\delta^2}{4}$-homogeneous partition we have
\begin{equation}\label{eq:non_hom_parts_count_2}
\sum_{(X,Y) \in \N_3}{\frac{|X||Y|}{m^2}} \leq
\frac{\delta^2}{4}.
\end{equation}

Let $R_i \times C_j$ be a $\frac{\delta^2}{4}$-homogeneous block and assume wlog that the dominant value of
$R_i \times C_{j}$ is \nolinebreak $1$. Let $\N(i,j)$ be the set of pairs
$(X,Y) \in \N_2$ for which $X \times Y \subseteq R_i \times C_{j}$.
Note that if $(X,Y) \in \N_2$ then the fraction of $0$'s in $X \times Y$ is larger than $\delta$. Since the fraction of $0$'s in $R_i \times C_j$ is at most $\delta^2/4$, Markov's inequality gives
$\sum_{(X,Y) \in \N(i,j)}
{\frac{|X||Y|}{|R_{i}||C_{j}|}} \leq \frac{\delta}{4}$.
Summing over all $\frac{\delta^2}{4}$-homogeneous blocks $R_i \times C_j$ gives
\begin{equation}\label{eq:non_hom_parts_count_3}
\sum_{(X,Y) \in \N_2}{\frac{|X||Y|}{m^2}} \leq
\sum_{i=1}^{s}\sum_{j=1}^{t}{\frac{\delta}{4} \cdot \frac{|R_i||C_j|}{m^2}} =
\frac{\delta}{4}.
\end{equation}
Equations (\ref{eq:non_hom_parts_count_1}), (\ref{eq:non_hom_parts_count_2}) and (\ref{eq:non_hom_parts_count_3}) together imply that the total weight of pairs $(X,Y) \in \N$ is at most
$\frac{\delta}{4} + 2 \cdot \frac{\delta^2}{4} \leq \frac{\delta}{2}$, proving that $\Q'$ is $\frac{\delta}{2}$-homogeneous.
\end{proof}

\subsection{Detailed proof of Theorem \ref{thm:easy_semi_algebraic}}\label{subsec:semi}

Let ${\cal P}$ be a semi-algebraic graph property defined by polynomials $f_1,\dots,f_t \in \mathbb{R}_{2k}[x]$ and a boolean function
$\Phi : \{\text{true},\text{false}\}^t \rightarrow \{\text{true},\text{false}\}$. Let $\F$ be the family of all graphs which do not satisfy ${\cal P}$. As ${\cal P}$ is a hereditary property we have ${\cal P} = {\cal P}^*_{\F}$. Thus, we only need to show that $\F$ satisfies conditions 1-2 in Theorem \ref{thm:easy_blowup}.

We start with condition 1. The {\em VC-dimension} of a binary matrix $A$ is the maximal integer $d \geq 0$ for which there is a $d \times 2^d$ submatrix $B$ of $A$, such that the set of columns of $B$ is the set of all $2^d$ binary vectors of length $d$. The VC-dimension of a graph is defined as the VC-dimension of its adjacency matrix. It is known\footnote{This follows from Warren's theorem on sign patterns of systems of polynomials, see for example \cite{Alon_Ramsey}.} that for every semi-algebraic graph property
${\cal P}$ there is $d = d({\cal P})$\footnote{In fact, $d$ only depends on the dimension $k$ and on the number, $t$, and degrees of the polynomials $f_1,\dots,f_t$ used to define ${\cal P}$. } such that every graph which satisfies ${\cal P}$ has VC-dimension strictly less than $d$. Now let $B$ be a $d \times 2^d$ binary matrix whose columns are all $2^d$ binary vectors of length $d$. Let $H$ be a bipartite graph with sides $X = \{x_1,\dots,x_d\}$ and
$Y = \{y_1,\dots,y_{2^d}\}$ such that $(x_i,y_j) \in E(H)$ if and only if $B_{i,j} = 1$. It is easy to see that no matter which graphs one puts on $X$ and on $Y$ (without changing the edges between $X$ and $Y$), the resulting graph on $X \cup Y$ will not satisfy ${\cal P}$ since its VC-dimension will be at least $d = d({\cal P})$. By putting empty graphs on $X$ and on $Y$ we get a bipartite graph that does not satisfy ${\cal P}$. Similarly, by putting complete graphs on $X$ and on $Y$ (a complete graph on $X$ and an empty graph on $Y$) we get a co-bipartite (split) graph which does not satisfy ${\cal P}$. This shows that $\F$ satisfies condition 1 in Theorem \ref{thm:easy_blowup}.

As for condition 2, let $F$ be a graph on $V(F) = [p]$ which satisfies
${\cal P}$ and let $x_1,\dots,x_p \in \nolinebreak \mathbb{R}^k$ be witnesses to the fact that $F$ satisfies ${\cal P}$. That is, for every $1 \leq i < j \leq p$ we have $(i,j) \in E(F)$ if and only if
$
\Phi\Big( f_1(x_i,x_j) \geq 0; \dots ; f_t(x_i,x_j) \geq 0 \Big) = \text{true}
$.
  We define a function
$g : V(F) \rightarrow \{0,1\}$ as follows: $g(i) = 1$ if
\begin{equation*}
\Phi\Big( f_1(x_i,x_i) \geq 0; \dots ; f_t(x_i,x_i) \geq 0 \Big) = \text{true}
\end{equation*}
and $g(i) = 0$ otherwise. We now show that every $g$-blowup of $F$ satisfies ${\cal P}$. Let $G$ be a $g$-blowup of $F$ with a vertex partition $V(G) = P_1 \cup \dots \cup P_p$ (as in Definition \ref{def:f_blowup}). Then for every $1 \leq i \leq p$, we simply assign the point $x_i$ to every vertex of $P_i$. From the definition of a $g$-blowup and from our choice of $g$, it follows that for every
$1 \leq i,j \leq p$ and for every pair of distinct vertices $v_i \in P_i$, $v_j \in P_j$ we have that $(v_i,v_j) \in E(G)$ if and only if
$
\Phi\Big( f_1(x_i,x_j) \geq 0; \dots ; f_t(x_i,x_j) \geq 0 \Big) = \text{true}
$.
Thus we have shown that
${\cal P}$ has the blowup quality, completing the deduction of Theorem \ref{thm:easy_semi_algebraic} from Theorem \ref{thm:easy_blowup}.

\section{Hard to Test Properties}\label{sec:hard}

This section is organized as follows. In Subsection \ref{subsec:Behrend_RS} we describe a variant of the well-known Ruzsa-Szemer\'edi construction which we use in the proofs of Theorems \ref{thm:finite_family}, \ref{thm:hard_bp_co-bp} and \ref{thm:one_graph_hard}. We then prove Theorem \ref{thm:hard_bp_co-bp} in Subsection \ref{subsec:hard_bp_co-bp}. In Subsection \ref{subsec:hom} we introduce some definitions that are needed in order to handle graph families (and not just individual graphs), leading to the proof of Theorem \ref{thm:finite_family} in Subsection \ref{subsec:hard_family}. The main step in the proof of Theorem \ref{thm:finite_family} is Theorem \ref{thm:aux_hard} (see Subsection \ref{subsec:hard_family}), which also implies Theorem \ref{thm:one_graph_hard}. Finally, in Subsection \ref{subsec:superpoly_blowup} we prove Theorem \ref{thm:hard_superpoly_blowup}.

In some of the proofs we use the following simple claim.
	\begin{claim}\label{claim:disj_collection}
	Let $m,h \geq 1$ be integers. Then there is a collection
	$\mathcal{S} \subseteq [m]^{h}$ of size at least $m^2/h^2$ such that every two $h$-tuples in $\mathcal{S}$ have at most one identical entry.
	\end{claim}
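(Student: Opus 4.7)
The plan is to prove the claim by probabilistic alteration on the ``conflict graph'' $G$ with vertex set $[m]^h$, in which two distinct tuples are adjacent iff they agree in at least two coordinates. A collection with the required property is precisely an independent set of $G$, so I only need to exhibit an independent set of size at least $m^2/h^2$.

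First I would bound $|E(G)|$ by a double count. For each pair of positions $1 \le i < j \le h$ and each pair of values $(u,v) \in [m]^2$, there are exactly $m^{h-2}$ tuples $T \in [m]^h$ with $T_i = u$ and $T_j = v$, contributing $\binom{m^{h-2}}{2}$ unordered pairs of distinct tuples that agree at positions $(i,j)$. Summing over the $\binom{h}{2}$ position-pairs and the $m^2$ value-pairs (overcounting pairs that agree in three or more positions, which only helps) gives
\[
|E(G)| \;\le\; \binom{h}{2}\, m^2 \binom{m^{h-2}}{2} \;\le\; \tfrac{h(h-1)}{4}\, m^{2h-2}.
\]

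Next, assuming $h \ge 2$ (for $h=1$ the bound $m^2/h^2 = m^2$ is only compatible with the trivial $m=1$), I would include each tuple of $[m]^h$ independently with probability $p := \tfrac{2}{h(h-1)}\, m^{2-h}$, which is at most $1$, to form a random set $\mathcal{S}_0$. Then $\mathbb{E}[|\mathcal{S}_0|] = p m^h = \tfrac{2m^2}{h(h-1)}$ and $\mathbb{E}\bigl[|E(G[\mathcal{S}_0])|\bigr] \le p^2 |E(G)| \le \tfrac{m^2}{h(h-1)}$. Linearity of expectation then gives an outcome with $|\mathcal{S}_0| - |E(G[\mathcal{S}_0])| \ge \tfrac{m^2}{h(h-1)}$; deleting one tuple from each ``bad'' pair yields an independent set $\mathcal{S}$ of $G$ with $|\mathcal{S}| \ge \tfrac{m^2}{h(h-1)} \ge \tfrac{m^2}{h^2}$, as required.

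The only mildly delicate point is the choice of the constant in $p$: the factor $2/(h(h-1))$ is precisely the one that maximizes the alteration estimate $p m^h - p^2 |E(G)|$, ensuring the guaranteed set size lands above $m^2/h^2$ rather than a smaller constant multiple of it. Beyond this elementary optimization, no substantive obstacle is anticipated; everything else is routine double counting.
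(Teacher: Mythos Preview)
Your argument is correct (with your caveat about $h=1$, which is indeed a defect of the statement, not of either proof, and irrelevant since the claim is only applied with $h\ge 3$). However, it takes a different route than the paper.

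The paper's proof is a one-line greedy deletion argument: it observes that every fixed tuple conflicts with at most $\binom{h}{2}m^{h-2}$ other tuples (i.e.\ the conflict graph $G$ you defined has maximum degree at most $\binom{h}{2}m^{h-2}$), so the obvious greedy procedure---repeatedly pick a surviving tuple and delete its neighbours---produces an independent set of size at least
\[
\frac{m^h}{1+\binom{h}{2}m^{h-2}} \;\ge\; \frac{m^2}{h^2}.
\]
This is the bound $\alpha(G)\ge |V(G)|/(\Delta(G)+1)$, and requires no probability.

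Your approach instead bounds the \emph{edge count} of $G$ and applies probabilistic alteration, effectively invoking the average-degree bound $\alpha(G)\gtrsim |V(G)|^2/|E(G)|$. Both are standard and both hit the target $m^2/h^2$; the greedy argument is slightly shorter and more elementary here because the degree bound is immediate, whereas you had to sum over position-pairs and optimise $p$. On the other hand, your method would still succeed if only average-degree information were available, so it is the more robust template.
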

	\begin{proof}
	We construct the collection $\mathcal{S}$ greedily: we start with an empty collection, add an arbitrary $h$-tuple to it, discard all $h$-tuples that coincide in more than one entry with the $h$-tuple we added and repeat. At the beginning we have all $m^h$ of the $h$-tuples in $[m]^h$. At each step we discard at most $\binom{h}{2}m^{h-2}$ tuples.
	Therefore, at the end of the process we have a collection of size at least
	\begin{equation*}
	\frac{m^h}{1 + \binom{h}{2}m^{h-2}} \geq \frac{m^h}{h^2m^{h-2}} = \frac{m^2}{h^2},
	\end{equation*}
	as required.
	\end{proof}	

\subsection{The construction of the graph $R$}\label{subsec:Behrend_RS}
We start with the following lemma, which plays a key role in our constructions.
	\begin{lemma}\label{lem:Behrend}
	For every $k \geq 2$ there is $\alpha = \alpha(k)$ such that for every integer $m$ there is a set
	$S \subseteq [m]$, $|S| \geq \frac{m}{e^{\alpha \sqrt{\log m}}}$, with the following property: Let $2 \leq \ell \leq k$ and let
	$a_1,...,a_{\ell} \geq 1$ be integers satisfying $a_1 + \dots + a_{\ell} \leq k$. Then the only solutions to the equation
	$$a_1s_1 + a_2s_2 + \dots + a_{\ell}s_{\ell} =
	(a_1 + \dots + a_{\ell})s_{\ell+1}$$
	with $s_1,...,s_{\ell + 1} \in S$ are trivial, i.e.
	$s_1 = s_2 = \dots = s_{\ell} = s_{\ell + 1}$.
	\end{lemma}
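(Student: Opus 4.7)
The plan is to adapt Behrend's classical sphere construction to this multi-variable convex setting. I will work in base $B := kn$ for parameters $n$ and $d$ to be chosen, and identify integers in $[0, B^d)$ with their base-$B$ digit vectors in $\{0,1,\ldots,B-1\}^d$. Restricting to vectors whose digits all lie in the smaller set $\{0,1,\ldots,n-1\}$ guarantees that every weighted sum $a_1 \mathbf{v}_1 + \cdots + a_\ell \mathbf{v}_\ell$ with $a_1+\cdots+a_\ell \leq k$ has all coordinates strictly less than $kn = B$, so no carrying occurs and the identity $a_1 s(\mathbf{v}_1) + \cdots + a_\ell s(\mathbf{v}_\ell) = (a_1+\cdots+a_\ell)\, s(\mathbf{v}_{\ell+1})$ (where $s(\cdot)$ is the base-$B$ encoding) is equivalent to the coordinatewise identity $a_1 \mathbf{v}_1 + \cdots + a_\ell \mathbf{v}_\ell = (a_1+\cdots+a_\ell)\, \mathbf{v}_{\ell+1}$.

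Next I would fix a squared-norm level $R$ and take $S$ to be the set of encodings of vectors $\mathbf{v} \in \{0,\ldots,n-1\}^d$ with $\|\mathbf{v}\|_2^2 = R$; by pigeonhole (since each squared norm lies in $\{0,1,\ldots,d(n-1)^2\}$) some level contains at least $n^d/(dn^2)$ vectors. The key structural step is then the convexity argument: writing $a = a_1+\cdots+a_\ell$, the equation displayed above says $\mathbf{v}_{\ell+1}$ is a convex combination of $\mathbf{v}_1,\ldots,\mathbf{v}_\ell$. Since $\|\cdot\|_2^2$ is strictly convex, Jensen's inequality yields $\|\mathbf{v}_{\ell+1}\|_2^2 \leq \frac{1}{a}\sum_i a_i \|\mathbf{v}_i\|_2^2 = R$ with equality iff $\mathbf{v}_1=\cdots=\mathbf{v}_\ell$; but all vectors sit on the sphere of squared radius $R$, forcing equality, and hence $s_1=\cdots=s_{\ell+1}$.

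Finally I would optimize the parameters. To make $S \subseteq [m]$ I need $B^d = (kn)^d \leq m$, and I want $|S| \geq n^{d-2}/d$ as large as possible. The standard choice $d := \lceil \sqrt{\log m}\, \rceil$ and $n := \lfloor m^{1/d}/k \rfloor$ gives $n = e^{\Theta(\sqrt{\log m})}$ and
\[
|S| \;\geq\; \frac{n^{d-2}}{d} \;\geq\; \frac{m}{k^d\, n^2\, d} \;\geq\; \frac{m}{e^{\alpha(k)\sqrt{\log m}}},
\]
for a suitable $\alpha(k)$ absorbing the factors $k^d = e^{\sqrt{\log m}\log k}$, $n^2 = e^{O(\sqrt{\log m})}$, and the polynomial factor $d$. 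I do not anticipate any genuine obstacle: the only mild subtlety is bookkeeping the "no carry" condition, which is why I use base $B=kn$ rather than $B=n$, and checking that the convexity step handles \emph{all} allowed coefficient tuples uniformly — but strict convexity of $\|\cdot\|_2^2$ is coefficient-free, so the same $S$ works simultaneously for every $\ell \leq k$ and every $(a_1,\ldots,a_\ell)$ with $\sum a_i \leq k$.
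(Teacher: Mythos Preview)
Your proposal is correct and is precisely the argument the paper has in mind: the paper does not actually write out a proof of this lemma, instead remarking that Behrend's sphere construction works verbatim for any convex equation and simultaneously for all such equations with bounded coefficient sum, and then omitting the details. Your write-up supplies exactly those details --- the base-$B$ encoding with $B = kn$ to prevent carries, the pigeonhole on sphere levels, the strict-convexity/Jensen step forcing equality, and the parameter choice $d \approx \sqrt{\log m}$ --- so there is nothing to compare.
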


Lemma \ref{lem:Behrend} is a variant of Behrend's construction \cite{Behrend} of a large subset of $[m]$ without a $3$-term arithmetic progression.
It is easy to show (see e.g. \cite{Ruzsa} and \cite{subgraphs}) that the same exact proof actually works
for any fixed convex equation, and that moreover, it works ``simultaneously'' for all convex equations (for fixed $k$) thus giving the above Lemma.
We thus omit a proof of the lemma.

The following lemma is our variant of the Ruzsa-Szemer\'edi construction, and is the key ingredient in the proofs of Theorems \ref{thm:finite_family}, \ref{thm:hard_bp_co-bp} and \ref{thm:one_graph_hard}.
	\begin{lemma}\label{lem:construction}
	For every $h \geq 3$ there are $\delta_0 = \delta_0(h)$ and
	$\beta = \beta(h)$ such that for every $\delta < \delta_0$ there is a graph $R = R(h,\delta)$ with the following properties:
	\begin{enumerate}
	\item $V(R) = V_1 \cup \dots \cup V_{h}$ and $V_i$ is an independent set for each $1 \leq i \leq h$.
	\item $|V(R)| \geq (1/\delta)^{\beta\log(1/\delta)}$.
	\item $E(R)$ is the union of at least $\delta |V(R)|^2$ pairwise edge-disjoint $h$-cliques.
	\item For every $3 \leq t \leq h$ and for every sequence
	$1 \leq i_1 < i_2 \dots < i_t \leq h$, $R$ contains at most
	$|V(R)|^2$ (not necessarily induced) cycles of the form
	$v_{i_1}v_{i_2} \dots v_{i_{t}}v_{i_1}$ with $v_{i_j} \in V_{i_j}$.
	\end{enumerate}
	\end{lemma}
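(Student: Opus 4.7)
The plan is to build $R$ as a Ruzsa--Szemer\'edi-type blowup of the $h$-vertex clique using a Behrend set. Fix $k=h$ in Lemma~\ref{lem:Behrend}, let $\alpha=\alpha(h)$ be the resulting constant, and let $S\subseteq[m]$ be the set produced by the lemma, where $m$ is a parameter to be calibrated. Take $V_1,\dots,V_h$ to be disjoint copies of $[2hm]$, and for each $(x,s)\in[hm]\times S$ insert the $h$-clique $K_{x,s}$ whose vertex in $V_i$ is $x+(i-1)s$. Properties (1) (independence of the $V_i$'s) and edge-disjointness in (3) are immediate: $K_{x,s}$ meets each $V_i$ in exactly one vertex, and any edge between $V_i$ and $V_j$ with endpoints $u,v$ uniquely determines $s=(v-u)/(j-i)$ and $x=u-(i-1)s$. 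The number of cliques is $hm|S|\geq hm^2\,e^{-\alpha\sqrt{\log m}}$, which is at least $\delta|V(R)|^2=4\delta h^4m^2$ whenever $e^{\alpha\sqrt{\log m}}\leq 1/(4\delta h^3)$. Setting $m=2^{\lfloor \beta\log^2(1/\delta)\rfloor}$ for a sufficiently small $\beta=\beta(h)<1/\alpha^2$, this inequality holds for all $\delta<\delta_0(h)$, and the same choice of $m$ yields $|V(R)|=2h^2m\geq(1/\delta)^{\beta\log(1/\delta)}$, giving properties (2) and (3).

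The heart of the proof is property (4). Fix $3\leq t\leq h$ and $1\leq i_1<\dots<i_t\leq h$, and consider any cycle $v_{i_1}v_{i_2}\dots v_{i_t}v_{i_1}$ with $v_{i_j}\in V_{i_j}$. By edge-disjointness, each of its $t$ edges lies in a unique clique $K_{x_j,s_j}$, and the edge formula above gives $s_j=(v_{i_{j+1}}-v_{i_j})/(i_{j+1}-i_j)$ for $j=1,\dots,t-1$, together with $s_t=(v_{i_t}-v_{i_1})/(i_t-i_1)$ for the wrap-around edge. Telescoping along the path $v_{i_1}\to v_{i_2}\to\dots\to v_{i_t}$ yields
\begin{equation*}
(i_2-i_1)s_1+(i_3-i_2)s_2+\dots+(i_t-i_{t-1})s_{t-1}=(i_t-i_1)\,s_t,
\end{equation*}
which is a convex linear equation over $S$: the coefficients $d_j=i_{j+1}-i_j$ are positive integers, the number of summands is $\ell=t-1\in[2,h-1]$, and $\sum_j d_j=i_t-i_1\leq h$. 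By Lemma~\ref{lem:Behrend} applied with $k=h$, all of $s_1,\dots,s_t$ must coincide; and once the $s_j$ are equal, the identity $x_{j+1}-x_j=(v_{i_{j+1}}-v_{i_j})-(i_{j+1}-i_j)s_j=0$ forces the $x_j$ to coincide as well. Hence the entire cycle lies in a single clique $K_{x,s}$, and conversely each such clique contributes exactly one cycle of the prescribed type. The total count is therefore at most $|[hm]\times S|\leq hm^2\leq|V(R)|^2$.

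The key conceptual step—the one that makes the whole construction work—is the observation that a monotonically-indexed cycle in this RS-type graph automatically encodes a convex linear equation with small positive coefficients, which is exactly the class of equations that the refined Behrend set from Lemma~\ref{lem:Behrend} is designed to have no non-trivial solutions for. I anticipate no serious obstacle beyond this; the only care needed is in calibrating $m$ so that the size bound $|V(R)|\geq(1/\delta)^{\beta\log(1/\delta)}$ and the density bound on the number of cliques hold simultaneously, which is achieved by any $\beta<1/\alpha^2$ once $\delta_0(h)$ is chosen sufficiently small.
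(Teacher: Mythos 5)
Your proposal is correct and follows essentially the same route as the paper: you construct the same Ruzsa--Szemer\'edi-type graph from arithmetic progressions $\{x, x+s, \dots, x+(h-1)s\}$ over a Behrend-type set $S$ avoiding convex equations (Lemma~\ref{lem:Behrend}), deduce edge-disjointness from the uniqueness of $(x,s)$ given any two points, and--crucially--verify property~(4) by telescoping a monotonically-indexed cycle into a convex linear equation with small coefficients, forcing all $s_j$ (and hence all $x_j$) to coincide so that the cycle collapses into a single clique. The only differences are cosmetic: you take $k=h$ in Lemma~\ref{lem:Behrend} where the paper uses the tighter $k=h-1$, you make the $V_i$ equal-sized copies of $[2hm]$ and let $x$ range over $[hm]$ whereas the paper uses $V_j=[jm]$ and $x\in[m]$, and you pick $m$ explicitly as a power of two rather than as the largest integer satisfying the relevant inequality; none of these affect the argument.
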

	\begin{proof}
	Let $\delta > 0$ and let $m$ be the largest integer satisfying
	\begin{equation}\label{eq:m_choice}
	\delta \leq \frac{1}{(h+1)^4e^{\alpha \sqrt{\log m}}}
	\end{equation}
	where $\alpha = \alpha(h-1)$ is from Lemma \ref{lem:Behrend}.
	It is easy to check that
	\begin{equation}\label{eq:bound_m}
	m \geq
	e^{\alpha^{-2}\log^2\left( \frac{1}{\delta(h+1)^4} \right)}
	\geq \left( 1/\delta \right)^{\beta\log(1/\delta)}
	\end{equation}
	if $\beta$ and $\delta$ are sufficiently small.
	
	Let $S \subseteq [m]$ be the set obtained by applying Lemma \ref{lem:Behrend} with $k = h-1$. For each
	$j = 1,\dots,h$ set $V_{j} = \{1,2,...,jm\}$. With a slight abuse of notation, we think of $V_1,...,V_h$ as disjoint sets. Set
	$V(R) = V_1 \cup \dots \cup V_h$. By (\ref{eq:bound_m}) we have
	$|V(R)| = \binom{h+1}{2}m \geq m \geq
		\left( 1/\delta \right)^{\beta\log(1/\delta)}$, as required.
	
	For every $x \in [m]$ and $s \in S$ define $A(x,s) = \{x,x+s,x+2s,\dots,x+(h-1)s\}$ and put a clique on $A(x,s)$ in $R$, where
	$x + js$ is taken from $V_{j+1}$ for every
	$j = 0,\dots,h-1$. Notice that for every $(x,s), (x',s') \in [m] \times S$ the following holds: if $\left| A(x,s) \cap A(x',s') \right| \geq 2$ then $(x,s) = (x',s')$. Indeed, if $\left| A(x,s) \cap A(x',s') \right| \geq 2$ then there are $0 \leq i < j \leq h-1$ for which $x + is = x' + is'$ and
	$x + js = x' + js'$. Solving this system of equations yields
	$(x,s) = (x',s')$, as required. So the cliques that we defined are indeed edge-disjoint. By the lower bound on $|S|$ in Lemma \ref{lem:Behrend} and by (\ref{eq:m_choice}), the number of these cliques is
	\begin{equation*}
	m \cdot |S| \geq \frac{m^2}{e^{\alpha \sqrt{\log m}}} \geq
	\delta (h+1)^4 m^2 \geq \delta |V(R)|^2\;.
	\end{equation*}
	
	To finish the proof, we show that for every $t \geq 3$, for every sequence $1 \leq i_1 < i_2 \dots < i_t \leq h$ and for every cycle of the form $v_{i_1}v_{i_2} \dots v_{i_t}v_{i_1}$ with
	$v_{i_j} \in V_{i_j}$, there are $x \in [m]$ and $s \in S$ such that
	$v_{i_1},v_{i_2},\dots,v_{i_t} \in A(x,s)$. This will show that the cycles of this form are pair-disjoint, implying that their number is at most $|V(R)|^2$.
	
	Let $v_{i_1}v_{i_2} \dots v_{i_t}v_{i_1}$ be a cycle in $R$ with
	$v_{i_j} \in V_{i_j}$ for every $1 \leq j \leq t$. By the definition of $R$, for every $j = 1,\dots,t$ there is $(x_j,s_j) \in [m] \times S$ such that $\{v_{i_{j}},v_{i_{j+1}}\} \subseteq A(x_j,s_j)$, with indices taken modulo $t$. This means that for every $1 \leq j \leq t-1$ we have
	\begin{equation}\label{eq:RS_graph_difference}
	\; v_{i_{j+1}} - v_{i_{j}} = (i_{j+1} - i_j) s_j,
	\end{equation}
	and that
	$v_{i_t} - v_{i_1} = (i_t - i_1)s_{t}$. Now, setting
	$a_j = i_{j+1} - i_j$ for $1 \leq j \leq t-1$ we get that
	$$ a_1s_1 + a_2s_2 + \dots + a_{t-1}s_{t-1} =
	(a_1 + \dots + a_{t-1})s_{t}. $$
	By using the property of $S$, stated in Lemma \ref{lem:Behrend}, with
	$\ell = t-1$, we conclude that
	$$s_1 = s_2 = \dots = s_t =: s.$$
	Now (\ref{eq:RS_graph_difference}) implies that
	$v_{i_j} = v_{i_1} + (i_j - i_1)s$ for every $1 \leq j \leq t$. By the definition of $A(x_1,s)$, the fact that $v_{i_1} \in A(x_1,s)$ implies that $v_{i_1},v_{i_2},\dots,v_{i_t} \in A(x_1,s)$, as required.
	\end{proof}
	
\subsection{Proof of Theorem \ref{thm:hard_bp_co-bp}}\label{subsec:hard_bp_co-bp}
	
	Consider the graph with vertices $\{1,2,3,4,5,6,7\}$ and edges $\{1,2\}, \{3,4\}, \{5,6\}$. Let $M$ be the complement of this graph. It is easy to see that $M$ is co-bipartite. We will prove Theorem \ref{thm:hard_bp_co-bp} with
	$F_1 = C_8$ (the cycle on $8$ vertices) and $F_2 = M$. We need the following lemma, which we prove later.
	
	\begin{lemma}\label{lem:copies}
	Let $G$ be a graph with a vertex partition $V = X_1 \cup ... \cup X_8$ such that
	\begin{itemize}
	\item $X_1,X_3,X_5,X_7$ are cliques and $X_2,X_4,X_6,X_8$ are independent sets.
	\item There are only edges between consecutive parts, i.e. $E(X_i,X_j) = \emptyset$ unless $|i-j| \equiv \pm 1 \pmod{8}$.
	\end{itemize}
	Then the following hold.
	\begin{enumerate}
	\item Every induced copy of $C_8$ in $G$ is of the form $x_1x_2...x_8x_1$, where $x_i \in X_i$.
	\item $G$ is induced $M$-free.
	\end{enumerate}
	\end{lemma}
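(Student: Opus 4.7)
The approach for both parts is to analyze the cluster indices assigned by the partition $V=X_1\cup\cdots\cup X_8$ to the vertices of each hypothetical subgraph, exploiting two basic facts about $G$: two vertices in the same part $X_i$ are adjacent iff $i$ is odd, and two vertices in distinct parts are adjacent only when those parts are cyclically consecutive.

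For Part 1, let $u_1,\dots,u_8$ be an induced $C_8$ in $G$ and set $c_i\in\{1,\dots,8\}$ by $u_i\in X_{c_i}$. Each cycle edge $u_iu_{i+1}$ forces the step $c_{i+1}-c_i\in\{-1,0,+1\}\pmod 8$, and a $0$-step is allowed only when $c_i$ is odd. I first observe that any odd part $X_k$ can contain at most two cycle vertices, and if it contains two then they are cyclically consecutive with no further cycle vertex in $X_k$ (otherwise a third vertex in the same clique would give a chord of the $C_8$). Collapsing each consecutive-equal pair to a single entry produces a closed walk $d_1,\dots,d_m$ on $\mathbb{Z}_8$ of length $m=8-z$ (where $z$ is the number of $0$-steps) with only $\pm 1$ steps, whose entries alternate in parity and whose odd entries are pairwise distinct. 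Combined with $z\le m/2$ (each $0$-step consumes a distinct odd cluster), this forces $m$ to be even and $m\in\{6,8\}$.

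The case $m=6$ gives a length-$3$ closed walk of the odd entries of $d$ on $\mathbb{Z}_4$ (the four odd clusters) with $\pm 1$ steps summing to $0\pmod 4$, which is impossible since three $\pm 1$ steps have odd sum. In the case $m=8$ the eight original steps are all $\pm 1$ summing to $0\pmod 8$; if they are not all of the same sign there are exactly four $+1$'s and four $-1$'s, and the same analysis applied to the four odd entries of $c$ produces a length-$4$ closed walk on $\mathbb{Z}_4$ with $\pm 1$ steps visiting all four points, which is possible only when all four sub-steps have the same sign. This in turn forces the eight original steps to be monotone, contradicting the mixed-sign assumption. Hence $(c_1,\dots,c_8)$ is a cyclic shift of $(1,2,\dots,8)$ or of its reverse, which is the desired form.

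For Part 2, I use the structure of $M$: vertex $7$ is universal, $\omega(M)=4$ (realized by $\{1,3,5,7\}$), $\alpha(M)=2$, and $M[\{1,\dots,6\}]\cong K_{2,2,2}$ whose only non-edges are the three matching pairs $\{1,2\},\{3,4\},\{5,6\}$. Let $w_1,\dots,w_7$ be an induced copy of $M$ in $G$ with $w_i$ playing the role of $i$, and put $w_7\in X_k$. Since $w_7$ is adjacent to every other $w_j$, all of $w_1,\dots,w_6$ lie in $X_{k-1}\cup X_k\cup X_{k+1}$, with placement in $X_k$ permitted only when $k$ is odd. If $k$ is even, then $w_1,\dots,w_6$ lie in the two cliques $X_{k-1}$ and $X_{k+1}$ with no $G$-edges between them; each such clique has size at most $\omega(K_{2,2,2})=3$, forcing a $3$-$3$ split into two disjoint triangles of $K_{2,2,2}$, but any two disjoint triangles in $K_{2,2,2}$ span at least $6$ cross-edges, contradicting the absence of cross-$G$-edges. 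If $k$ is odd, let $a,b,c$ denote the counts of $w_1,\dots,w_6$ in $X_{k-1},X_k,X_{k+1}$; then $a+b+c=6$, $a,c\le\alpha(M)=2$, and $b+1\le\omega(M)=4$, restricting $(a,b,c)$ to $(2,2,2),(2,3,1),(1,3,2)$. In $(2,2,2)$, each of $X_{k\pm 1}$ is a matching pair of $M$, forcing $X_k\cap\{w_1,\dots,w_6\}$ to be the third matching pair and hence a non-edge of $M$, contradicting the clique-ness of $X_k$. In the other two subcases, $X_k\cap\{w_1,\dots,w_6\}$ would need to be a triangle of $K_{2,2,2}$ (a transversal of the three matching pairs), but the matching pair contained in whichever of $X_{k\pm 1}$ has $2$ elements removes one matching pair from the possible transversal, yielding a contradiction. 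The main obstacle in the whole proof is organizing the mixed-sign subcase of Part 1 cleanly; the Part 2 argument is routine book-keeping once the clique and independence numbers of $M$ are in hand.
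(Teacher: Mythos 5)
Your proof is correct, but both parts take a genuinely different route from the paper. For Part 1 the paper argues directly: if some odd part contained two cycle vertices, say $x_1,x_2\in X_1$, then a short case analysis on where $x_3,x_8$ land shows the remaining path would have to traverse more parts than it has vertices; once each odd part holds at most one cycle vertex, $\alpha(C_8)=4$ forces exactly one in each odd part, and the same ``not enough room for a detour'' argument shows every even part is hit as well. Your version instead encodes the sequence of cluster indices, collapses repeated entries, and runs a modular-arithmetic/parity argument over $\mathbb{Z}_8$ and $\mathbb{Z}_4$ — more machinery, but self-contained and arguably more systematic (it would adapt to other cycle lengths with little change). For Part 2 the gap is wider: the paper's proof is a three-line count — $X_{\text{odd}}$ is a disjoint union of cliques, hence induced $P_3$-free; every $5$ vertices of $M$ contain an induced $P_3$, so at most $4$ vertices of the copy lie in $X_{\text{odd}}$; $\alpha(M)=2$ gives at most $2$ in $X_{\text{even}}$; total $\le 6<7$. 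You instead anchor on the universal vertex $w_7$, localize the other six vertices to $X_{k-1}\cup X_k\cup X_{k+1}$, and dispose of the resulting cases $(a,b,c)$ by hand using $\omega(K_{2,2,2})=3$ and $\alpha(K_{2,2,2})=2$. Your argument is sound (I checked the cases), but the paper's global $P_3$/$\alpha$ count is substantially shorter and makes the role of the clique/independent-set structure of the $X_i$'s more transparent; on the other hand your localization around $w_7$ makes explicit which constraint of $M$ breaks first, which is perhaps more illuminating about why this particular co-bipartite graph was chosen.
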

	
	\begin{proof}[Proof of Theorem \ref{thm:hard_bp_co-bp}]
	Set $F_1 = C_8$ and $F_2 = M$.
We will show that for every sufficiently small $\varepsilon > 0$ and for every $n \geq n_0(\varepsilon)$ there is a graph $G$ on $n$ vertices which is $\varepsilon$-far from being induced $\{F_1,F_2\}$-free yet contains at most $c\varepsilon^{\frac{1}{c}\log(1/c\varepsilon)}n^{v(F_i)}$ induced copies of $F_i$
for $i = 1,2$ and for some $c = c(F_1,F_2)$ \footnote{In fact, $G$ will be induced $F_2$-free.}. This will imply that
${\cal P}^*_{\{F_1,F_2\}}$ is not easily testable.
	
Let $\varepsilon \in \big(0, \frac{\delta_0(8)}{64} \big)$, where $\delta_0(8)$ is from Lemma \ref{lem:construction}. Let
 $R = R(8,64\varepsilon)$ be the graph obtained by applying Lemma \ref{lem:construction}. Recall that
$V(R) = V_1 \cup \dots \cup V_8$ and put $r = |V(R)|$. For simplicity of presentation, we assume that $n$ is divisible by $r$. We define a graph $G$ on an $\frac{n}{r}$-blowup of $R$, that is, we replace each vertex $v \in V(R)$ with a vertex-set $B(v)$ of size $\frac{n}{r}$.
Put $X_i = \bigcup_{v \in V_i}{B(v_i)}$ for $1 \leq i \leq 8$. The edges of $G$ are defined as follows: $B(V_1),B(V_3),B(V_5),B(V_7)$ are cliques and $B(V_2),B(V_4),B(V_6),B(V_8)$ are independent sets. By Lemma \ref{lem:construction}, $R$ contains a collection $\mathcal{H}$ of at least $64\varepsilon r^2$ pairwise edge-disjoint cliques, each of the form $\{v_1,\dots,v_8\}$ with $v_i \in V_i$. For each such clique
$\{v_1,\dots,v_8\} \in \mathcal{H}$ we put a blowup of $C_8$ on the sets $B(v_1),\dots,B(v_8)$, namely, for each
$(x_1,\dots,x_8) \in B(v_i) \times \dots \times B(v_8)$,
$x_1x_2\dots x_8x_1$ is an induced $8$-cycle in $G$. Notice that $G$ satisfies the assumptions of Lemma \ref{lem:copies} with
$X_i = B(V_i)$. Thus, $G$ is induced $M$-free and every induced copy of $C_8$ in $G$ is of the form $x_1x_2 \dots x_8x_1$ with $x_i \in B(V_i)$. Let $x_1x_2 \dots x_8x_1$ be an induced copy of $C_8$ in $G$ and let $v_i \in V_i$ be such that $x_i \in B(v_i)$. From the construction of $G$ it follows that
$v_1v_2\dots v_8v_1$ is a (not necessarily induced) cycle in $R$. By item 4 in Lemma \ref{lem:construction}, the number of such cycles is at most $r^2$. We conclude that $G$ contains at most
$r^2 \left( n/r \right)^8 \leq n^8/r$ induced copies of $C_8$. By item $2$ in Lemma \ref{lem:construction} we have
$r \geq (1/64\varepsilon)^{\beta \log(1/64\varepsilon)}$. Therefore, the number of induced copies of $C_8$ in $G$ is at most
$(64\varepsilon)^{\beta\log(1/64\varepsilon)}n^8$, as required.
	
	To finish the proof, we show that $G$ contains $\varepsilon n^2$  pair-disjoint\footnote{Two subgraphs are {\em pair disjoint} if they share at most one vertex.} induced copies of $C_8$, which will imply that $G$ is $\varepsilon$-far from being induced $\{C_8,M\}$-free.
	By Claim \ref{claim:disj_collection} and the construction of $G$, for every clique $\{v_1,\dots,v_8\} \in \mathcal{H}$ there are
	$\left( n/8r \right)^2$ pair-disjoint induced copies of $C_8$ of the form $(x_1,\dots,x_8) \in B(v_i) \times \dots \times B(v_8)$. Since the cliques in $\mathcal{H}$ are pair-disjoint, copies of $C_8$ that come from different cliques are pair-disjoint. By using
	$|{\cal H}| \geq 64 \varepsilon r^2$ we get that $G$ contains a collection of
	$|\mathcal{H}| \cdot \left( n/8r \right)^2 \geq
	64 \varepsilon r^2 \left( n/8r \right)^2 = \varepsilon n^2$
pair-disjoint induced copies of $C_8$, as required.
	\end{proof}
\begin{proof}[Proof of Lemma \ref{lem:copies}]
We start with item $1$. Let $C = x_1x_2...x_8x_1$ be an induced copy of $C_8$ in $G$. Our goal is to show that
$|C \cap \nolinebreak X_i| = \nolinebreak 1$ for every $1 \leq i \leq 8$. First, assume by contradiction, that
$\left| C \cap X_i \right| \geq 2$ for some $i \in \nolinebreak \{1,3,5,7\}$. Note that $|C \cap X_i| < 3$, as otherwise $C$ would contain a triangle. As $X_i$ is a clique, there is $j \in \{1,...,8\}$ for which $x_j,x_{j+1} \in X_i$ (with indices taken modulo $8$). We may assume, wlog, that $x_1,x_2 \in X_1$. As $x_3,x_8 \notin X_1$, we must have
$x_3,x_8 \in X_2 \cup X_8$. First we consider the case that $x_3$ and $x_8$ are in the same part, say $x_3,x_8 \in X_2$. Then $x_4,x_7 \in X_3$ because
$(x_4,x_3), (x_7,x_8) \in E(G)$, $X_2$ is an independent set and
$x_4,x_7 \notin X_1$. Since $X_3$ is a clique, we get that
$(x_4,x_7) \in E(G)$, in contradiction to the fact that $C$ is an induced cycle. Now we consider the case that $x_3$,$x_8$ are in different parts, say
$x_3 \in X_2$, $x_8 \in X_8$. The path $P = x_3x_4...x_8$ cannot go through $X_1$. Therefore, $P$ must contain at least one vertex from each of the seven parts $X_2,...,X_8$. However, this is impossible since $P$ contains $6$ vertices.

In the previous paragraph we showed that $|C \cap X_i| \leq 1$ for every
$i \in \{1,3,5,7\}$.  Define the sets
$X_{odd} := X_1 \cup X_3 \cup X_5 \cup X_7$ and
$X_{even} := \nolinebreak X_2 \cup X_4 \cup X_6 \cup X_8$. Since
$X_{even}$ is an independent set and $\alpha(C_8) = \nolinebreak 4$, we have $|C \cap X_{even}| \leq 4$. Thus
$|C \cap X_{odd}| \geq 4$, implying that
$|C \cap X_i| = 1$ for every $i \in \nolinebreak \{1,3,5,7\}$.   	
In order to finish the proof it is enough to show that $|C \cap X_i| \geq 1$ for each $i \in \nolinebreak \{2,4,6,8\}$. Suppose, by contradiction, that
$C \cap X_i = \emptyset$ for some $i \in \nolinebreak \{2,4,6,8\}$, say $i = 2$. Let $j,k \in \{1,\dots,8\}$ be such that $x_j \in X_1$ and
$x_k \in X_3$. In the cycle $C$ there is a path between $x_j$ and $x_k$ with at most $5$ vertices (including $x_j$ and $x_k$). This path cannot intersect $X_2$, so it must contain at least one vertex from each of the seven parts $X_1,X_3,X_4,...,X_8$, which is impossible.

For item $2$, suppose by contradiction that $Y \subseteq V(G)$ spans an induced copy of $M$. As before, define $X_{odd} = X_1 \cup X_3 \cup X_5 \cup X_7$ and $X_{even} = X_2 \cup X_4 \cup X_6 \cup X_8$. Notice that $X_{even}$ is an independent set and that $X_{odd}$ is a disjoint union of cliques and hence induced $P_3$-free (where $P_3$ is the path with $3$ vertices). Observe that every set of 5 vertices of $M$ contains an induced copy of $P_3$. So
$|Y \cap X_{odd}| \leq 4$. Moreover, $|Y \cap X_{even}| \leq 2$ because $\alpha(M) = 2$. We got that $|Y| \leq 6 < 7 = |V(M)|$, a contradiction.
\end{proof}	
	
	\subsection{Homomorphisms and Cores} \label{subsec:hom}
	Recall that a {\em homomorphism} from a graph $G_1$ to a graph $G_2$ is a map $f: V(G_1) \rightarrow V(G_2)$ such that for every $u,v \in V(G_1)$, if $(u,v) \in E(G_1)$ then $(f(u),f(v)) \in E(G_2)$.
	For a graph $G$, the {\em core} of $G$, denoted $C(G)$, is the smallest induced subgraph of $G$ (with respect to number of vertices) to which there is a homomorphism from $G$.
	Observe that every homomorphism from $C(G)$ to itself is an isomorphism.
	We write $G_1 \homleq G_2$ if there is a homomorphism from $G_2$ to $G_1$, and we say that $G_2$ is homomorphic to $G_1$. Notice that the relation $\homleq$ is transitive.
	The notation $G_1 \cong G_2$ means that $G_1$ and $G_2$ are isomorphic graphs.
	
	Let $\F$ be a finite family of graphs and consider the set
	$\C = \C(\F) = \{C(F) : F \in \F \}$. It is easy to see that $(\C,\homleq)$ is a poset in the following sense:
	for every $C_1,C_2 \in \C$, if $C_2 \homleq C_1$ and $C_1 \homleq C_2$ then $C_1 \cong C_2$. Indeed, if there are homomorphisms $f : C_1 \rightarrow C_2$ and $g : C_2 \rightarrow C_1$ then
	$g \circ f$ (resp. $f \circ g$) is a homomorphism from $C_1$ (resp. $C_2$) to itself. Thus $f \circ g$ and $g \circ f$ are isomorphisms, implying that so are $f$ and $g$.
	
	In other words, $\homleq$ is a partial order on the set of equivalence classes of $\C$ under the equivalence relation of graph isomorphism.
	Let $K(\F)$ be a maximal element of the poset $(\C,\homleq)$, i.e. $K(\F)$ is an (arbitrary) element of a maximal equivalence class. The maximality of $K(\F)$ implies that for every $C \in \C$, if there is a homomorphism from $C$ to $K(\F)$ (namely if $K(\F) \homleq C$) then $C \cong K(\F)$.
	\begin{proposition}\label{prop:core}
	Let $F \in \F$. For every homomorphism
	$f : \nolinebreak F \rightarrow \nolinebreak K(\mathcal{F})$ there is a set
	$X \subseteq V(F)$ such that $f|_{X}$ is an isomorphism onto $K(\F)$.
	\end{proposition}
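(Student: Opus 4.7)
The plan is to take $X = V(C(F))$, the vertex set of the core of $F$, and show that $f|_X$ is a graph isomorphism from $C(F)$ onto $K(\F)$. The argument will combine exactly two ingredients from the paragraphs above: the maximality of $K(\F)$ in the poset $(\C,\homleq)$, and the defining rigidity property of cores (every homomorphism from $C(F)$ to itself is an isomorphism).

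First I would note that since $C(F)$ is an \emph{induced} subgraph of $F$, the restriction $f|_{V(C(F))} : C(F) \to K(\F)$ is still a homomorphism. This exhibits a homomorphism from $C(F) \in \C$ to $K(\F)$, i.e.\ $K(\F) \homleq C(F)$. The maximality of $K(\F)$ in $(\C,\homleq)$ then forces $C(F) \cong K(\F)$, so I can fix any isomorphism $\phi : K(\F) \to C(F)$.

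Next, consider the composition $\phi \circ f|_{V(C(F))} : C(F) \to C(F)$. This is a homomorphism from the core $C(F)$ into itself, so by the defining property of a core it is an automorphism of $C(F)$. Since $\phi$ is a bijection, it follows that $f|_{V(C(F))}$ is itself a bijection from $V(C(F))$ onto $V(K(\F))$.

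It remains to upgrade this bijective homomorphism to an isomorphism. Since $C(F) \cong K(\F)$ have the same number of edges, and $f|_X$ is an injective map sending edges of $C(F)$ to (distinct) edges of $K(\F)$, it in fact induces a bijection between the edge sets; consequently non-edges map to non-edges, so $f|_X$ is an isomorphism $C(F) \to K(\F)$, as required. The only conceptual step is the interplay between the two tools: maximality alone gives $C(F) \cong K(\F)$ but not that $f|_{C(F)}$ itself is the isomorphism, while core-rigidity alone gives bijectivity but not the target; composing through $\phi$ is the trick that fuses them.
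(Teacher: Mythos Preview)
Your proof is correct and follows essentially the same route as the paper: set $X=V(C(F))$, use maximality of $K(\F)$ to get $C(F)\cong K(\F)$, and then invoke the rigidity of cores to conclude that $f|_X$ is an isomorphism. The paper compresses your last two paragraphs into the single line ``since $C\cong K(\F)$, every homomorphism from $C$ to $K(\F)$ is an isomorphism,'' whereas you spell out the composition with $\phi$ and the edge-counting upgrade from bijective homomorphism to isomorphism; but the underlying argument is identical.
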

	\begin{proof}
	Let $C = C(F)$. Since $f|_{V(C)}$ is a homomorphism from $C$ to $K$, and since $K(\F)$ is maximal, we have $C \cong K(\F)$. By the property of a core, every homomorphism from $K(\F)$ to itself is an isomorphism. Since
	$C \cong K(\F)$, every homomorphism from $C$ to $K(\F)$ is an isomorphism. Thus, $f|_{V(C)}$ is an isomorphism and the assertion of the proposition holds with $X = V(C)$.
	\end{proof}
	
	\subsection{Proof of Theorems \ref{thm:finite_family} and  \ref{thm:one_graph_hard}}\label{subsec:hard_family}
	\noindent
	Theorems \ref{thm:finite_family} and \ref{thm:one_graph_hard} follow easily from the following theorem.
	\begin{theorem}\label{thm:aux_hard}
	For every $h \geq 3$ there is $\varepsilon_0 = \varepsilon_0(h)$ and $c = c(h)$ such that the following holds for every $\varepsilon < \varepsilon_0$ and for every non-bipartite graph $H$ on $h$ vertices. Let $K$ be the core of $H$. For every $n \geq n_0(\varepsilon)$ there is a graph on $n$ vertices with the following properties.
	\begin{enumerate}
	\item $G$ is homomorphic to $K$.
	\item $G$ is $\varepsilon$-far from being induced-$H$-free.
	\item $G$ contains at most $\varepsilon^{c\log(1/\varepsilon)}n^k$ (not necessarily induced) copies of $K$, where $k = |V(K)|$.
	\end{enumerate}
	\end{theorem}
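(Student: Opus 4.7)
The plan is to extend the construction of Theorem \ref{thm:hard_bp_co-bp} to an arbitrary non-bipartite $H$ by planting copies of $H$ on the pair-disjoint cliques supplied by Lemma \ref{lem:construction}. Write $h = v(H)$, $k = v(K)$, and let $g = 2s+1$ be the odd-girth of $H$; since $K$ is the core of the non-bipartite $H$, $K$ is itself non-bipartite, is induced in $H$, and shares the odd-girth $g$ with $H$. Apply Lemma \ref{lem:construction} with parameter $h$ and $\delta = \Theta(\varepsilon h^2)$ to get $R = R(h,\delta)$ with $r := |V(R)| \geq (1/\delta)^{\beta \log(1/\delta)}$, layers $V_1,\dots,V_h$, and at least $\delta r^2$ pair-disjoint $h$-cliques covering $E(R)$. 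Fix an identification $V(H) = [h]$ with $V(K) = \{1,\dots,k\}$ and with a chosen shortest odd cycle $C^* = w_1 \cdots w_g w_1$ of $K$ labeled $1,2,\dots,g$ in cyclic order. Form $R_H$ from $R$ by keeping, inside each $h$-clique $\{v_1,\dots,v_h\}$ with $v_i \in V_i$, only those edges $(v_i,v_j)$ for which $(i,j) \in E(H)$, so that every $h$-clique becomes an edge-disjoint copy of $H$. Finally let $G$ be the balanced blowup of $R_H$: replace each $v$ by an independent set $B(v)$ of size $n/r$ and put the complete bipartite graph between $B(u),B(v)$ whenever $(u,v) \in E(R_H)$.

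Properties 1 and 2 then follow as in Theorem \ref{thm:hard_bp_co-bp}. The layer map $V(R_H) \to [h]$ is a homomorphism into $H$ (every edge of $R_H$ lies in a planted $H$), and composing with the retraction $\pi : H \to K$ from Proposition \ref{prop:core} gives $R_H \to K$; since blowups preserve homomorphisms, $G \to K$, establishing Property 1. For Property 2, each of the $\delta r^2$ planted $H$'s in $R_H$ produces a blowup of $H$ on $h$ independent sets of size $n/r$, from which Claim \ref{claim:disj_collection} extracts $(n/r)^2/h^2$ pair-disjoint induced copies of $H$; because planted $H$'s share at most one vertex of $R_H$, induced copies from distinct plantings share at most one vertex in $G$, so $G$ contains at least $\delta r^2 \cdot (n/r)^2/h^2 = \varepsilon n^2$ pair-disjoint induced copies of $H$ and is $\varepsilon$-far from induced-$H$-freeness.

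The heart of the argument is Property 3. To each copy $\psi : V(K) \to V(G)$ associate its trace $\tau(w) := $ the unique $v$ with $\psi(w) \in B(v)$; since $B$-cells are independent, $\tau$ is a graph homomorphism $K \to R_H$ and accounts for at most $(n/r)^k$ copies $\psi$. Composing $\tau$ with the layer map gives a homomorphism $\rho : K \to H$ recording the layer of each $\tau(w)$. The image $\tau(C^*)$ is a closed walk of length $g$ in $R$ whose projection into $H$ is also a closed walk of length $g$; since $g$ is the odd-girth of $H$, this closed walk cannot hide any shorter odd cycle, so $\rho$ must be injective on $C^*$ and $\rho(C^*)$ must be a shortest odd cycle of $H$ in $[h]$. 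If the labeling of $V(H)$ has been chosen so that every such $\rho(C^*)$ is cyclic-monotone in $[h]$, then $\tau(C^*)$ is a monotone cycle in $R$ and Lemma \ref{lem:construction}(4), summed over the $\binom{h}{g}$ monotone layer sequences, bounds the number of choices of $\tau(C^*)$ by $\binom{h}{g} r^2$; the remaining $k - g$ vertices of $V(K)$ each lie in a single layer, contributing at most $r^{k-g}$ further choices. Summing over the finitely many types $\rho$ and multiplying by $(n/r)^k$ yields at most $C(H) \cdot n^k / r^{g-2} \leq \varepsilon^{c\log(1/\varepsilon)} n^k$, using $r \geq (1/\varepsilon)^{\Omega(\log(1/\varepsilon))}$, which is the required bound.

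The main obstacle is thus choosing the labeling of $V(H) = [h]$ so that every shortest odd cycle of $H$ that can arise as $\rho(C^*)$ is cyclic-monotone in $[h]$. Because $K$ is the core of $H$, Proposition \ref{prop:core} forces $\pi \circ \rho$ to be an automorphism of $K$, so only automorphic images of $C^*$ lifted through fibres of the retraction $\pi$ need to be made monotone, not every $g$-cycle in $H$; this should permit a greedy placement of the labels $\{g+1,\dots,h\}$ after fixing the cyclic-monotone labeling of $C^* \subseteq V(K)$. When $H$ has a very symmetric structure with many shortest odd cycles in a single $\mathrm{Aut}(H)$-orbit, a direct monotone labeling may be unachievable and one needs instead the more flexible property ``every trace $\tau$ forces \emph{some} cycle of $\tau(V(K))$ to be cyclic-monotone in $[h]$,'' mirroring the ``any copy of $H$ contains a monotone cycle'' strategy sketched in Subsection \ref{subsec:nuggets}; verifying this alternative labeling condition carefully is the technical crux.
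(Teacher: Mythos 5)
Your construction agrees with the paper's: plant an induced copy of $H$ on each $h$-clique supplied by Lemma \ref{lem:construction}, take a balanced blowup, and verify Properties 1 and 2 exactly as you do. The gap is in Property 3, and it is precisely the issue you flag as ``the technical crux'' --- your argument as written does not close it, and your proposed fixes (greedy placement of the labels $\{g+1,\dots,h\}$, or the alternative ``some cycle of $\tau(V(K))$ is monotone'') are not shown to succeed. In fact the approach of making \emph{every} $\mathrm{Aut}(K)$-image of $C^*$ cyclic-monotone under a single labeling of $[h]$ is generally impossible (consider $K = C_5$, where the rotation automorphism group already rules out a single labeling making all five rotated $5$-cycles monotone).

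The paper's resolution sidesteps the automorphism issue entirely, and you have all the pieces but assemble them in the wrong order. Fix a homomorphism $\varphi : H \to K$, pick \emph{any} odd cycle $a_1 a_2 \cdots a_t a_1$ in $K$ (not necessarily of minimal length, so no odd-girth argument is needed), and label $V(H) = [h]$ so that the fibers $H_j := \varphi^{-1}(a_j)$ are \emph{consecutive intervals} $H_1 < H_2 < \cdots < H_k$. Now given a copy $C$ of $K$ in $G$, restrict the layer homomorphism $\phi : G \to K$ to $C$; by the core property this restriction is an isomorphism, so for each $j$ there is a unique vertex $u_j \in C$ with $\phi(u_j) = a_j$, and $u_j$ lies in some layer $i_j \in H_j$. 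Because the fibers are consecutive intervals, $i_1 < i_2 < \cdots < i_t$ holds automatically, with no constraint on the automorphism $\phi|_C \circ \psi$. The key conceptual move you are missing is that one should not fix in advance which vertex of the copy plays the role of which $a_j$ (via your $\psi$) and then hope the resulting layer sequence is monotone; rather, one lets $\phi$ itself assign the roles, at which point monotonicity is forced by the interval labeling of the fibers. With this change, the cycle $u_1 \cdots u_t u_1$ projects to a monotone cycle in $R$, Lemma \ref{lem:construction}(4) applies, and the count goes through exactly as you sketch.
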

	
	\begin{proof} 
	Fix a homomorphism $\varphi : H \rightarrow K$. By assumption, $H$ is not bipartite. It is easy to see that the homomorphic image of a non-bipartite graph is itself non-bipartite, implying that $K$ contains an odd cycle. Label the vertices of $K$ by $a_1,\dots,a_k$ so that $a_1a_2 \dots a_ta_1$ is an odd cycle. Define
	$H_i = \varphi^{-1}(a_i)$ for $i = 1,\dots,k$. Label the vertices of $H$ by $1,\dots,h$ so that for each $1 \leq i < j \leq k$, the labels of the vertices in $H_i$ are smaller than the labels of the vertices in $H_j$.
	
	Let $\varepsilon > 0$. We will assume that $\varepsilon$ is small enough where needed. Let
	$R = R(h,h^2\varepsilon)$ be the graph obtained by applying Lemma \ref{lem:construction}. Here we assume that $\varepsilon < \frac{\delta_0(h)}{h^2}$, where $\delta_0(h)$ is from Lemma \ref{lem:construction}.
	Recall that $V(R) = V_1 \cup \dots \cup V_h$ and put $r = |V(R)|$. We define a graph $S$ on $V(R)$ as follows. By item 3 of Lemma \ref{lem:construction}, $R$ contains a collection $\mathcal{H}$ of at least $\varepsilon h^2 r^2$ pair-disjoint $h$-cliques, each of the form $\{v_1,\dots,v_h\}$ where
	$v_i \in V_i$. For every
	$\{v_1,\dots,v_h\} \in \mathcal{H}$ we put an induced copy of $H$ on $\{v_1,\dots,v_h\}$ in which $v_i$ plays the role of $i$ for every $i \in [h] = V(H)$. The resulting graph is $S$.
It is clear from the definition that ${\cal H}$ is a collection of pair-disjoint induced copies of $H$ in $S$.


	Let $n$ be an integer.
	For simplicity of presentation, we assume that $n$ is divisible by
	$r = |V(S)|$. Let $G$ be an $\frac{n}{r}$-blowup of $S$, that is, $G$ is obtained by replacing each vertex $v \in V(S)$ with an independent set $B(v)$ of size $\frac{n}{r}$, replacing edges with complete bipartite graphs and replacing non-edges with empty bipartite graphs. Clearly $|V(G)| = n$.
	For $1 \leq i \leq h$ put $B(V_i) = \bigcup_{v \in V_i}{B(v)}$.
	Observe that the map which sends $\bigcup_{i \in H_j}{B(V_i)}$ to $a_j$ for every $1 \leq j \leq k$ is a homomorphism from $G$ to $K$. This establishes item 3 in the statement of the theorem.
	
	As mentioned above, ${\cal H}$ is a collection of at least $\varepsilon h^2 r^2$ pair-disjoint induced copies of $H$ in $S$. We call these copies the {\em base copies} of $H$.
	For every base copy $\{v_1,\dots,v_h\} \in {\cal H}$, Claim \ref{claim:disj_collection} gives a collection of at least $(n/rh)^2$ pair-disjoint induced copies of $H$ in $G$, each of the form $\{x_1,\dots,x_h\}$ with $x_i \in B(v_i)$. We say that these copies are {\em derived} from $\{v_1,\dots,v_h\}$. Since the base copies are pair-disjoint, two copies which are derived from different base copies are also pair-disjoint. Thus, $G$ contains a collection of at least
	$ |\mathcal{H}| \cdot (n/rh)^2 \geq
		\varepsilon h^2r^2 \cdot (n/rh)^2 = \varepsilon n^2$
pair-disjoint induced copies of $H$. This shows that $G$ is $\varepsilon$-far from being induced $H$-free.
	
To finish the proof it remains to show that $G$ contains at most $\varepsilon^{c\log(1/\varepsilon)}n^k$ copies of $K$. Consider a copy of $K$ in $G$. For each $j = 1,\dots,k$, let $U_j \subseteq V(G)$ be the set of vertices of this copy that are contained in
$\bigcup_{i \in H_j}{B(V_i)}$. Notice that the map that sends $U_j$ to $a_j$ (for each $j = 1,\dots,k$) is a homomorphism from $K$ to itself. By the property of a core (see Subsection \ref{subsec:hom}), this map is an isomorphism. Thus, $U_j = \{u_j\}$ and for each $1 \leq i < j \leq k$ we have $(u_i,u_j) \in E(G)$ if and only if $(a_i,a_j) \in E(K)$. Since $a_1a_2 \dots a_ta_1$ is a cycle in $K$, $u_1,\dots,u_t,u_1$ is a cycle in $G$. Let
$i_j \in H_j$ be such that $u_j \in B(V_{i_j})$ and let
$v_{i_j} \in V_{i_j}$ be such that $u_j \in B(v_{i_j})$. Then
$i_1 < i_2 < \dots < i_t$ due to the way we labeled the vertices of $H$. Since $G$ is a blowup of $S$, $v_{i_1}v_{i_2}\dots v_{i_t}v_{i_1}$ must be a cycle in $S$. Finally, by the definition of $S$,
$v_{i_1}v_{i_2}\dots v_{i_t}v_{i_1}$ is a cycle in $R$.
	
In the previous paragraph we proved that every copy of $K$ in $G$ contains vertices $u_1,\dots,u_t$ with the following property: there is an increasing sequence
$1 \leq i_1 < i_2 < \dots < i_t \leq h$ and vertices $v_{i_j} \in V_{i_j}$ such that $u_j \in B(v_{i_j})$ and $v_{i_1}v_{i_2}\dots v_{i_t}v_{i_1}$ is a cycle in $R$. For every increasing sequence $(i_1, i_2, \dots, i_t)$, Lemma \ref{lem:construction} states that $R$ contains at most $r^2$ cycles of the form $v_{i_1}v_{i_2}\dots v_{i_t}v_{i_1}$ with $v_{i_j} \in V_{i_j}$. Therefore, the number of copies of $K$ in $G$ that correspond to a specific increasing sequence is at most $r^2 \left( n/r \right)^t n^{k-t} \leq n^k/r$.
Here we used the inequality $t \geq 3$ which follows from the fact that the cycle $a_1,\dots,a_t$ is odd. We now take the union bound over all $\binom{h}{t}$ increasing sequences
$(i_1, i_2, \dots, i_t)$ and use the inequality
$r \geq (1/h^2\varepsilon)^{\beta(h)\log(1/h^2\varepsilon)}$ given by Lemma \ref{lem:construction}. We get that the number of copies of $K$ in $G$ is at most
$$\binom{h}{t}n^k/r \leq
\binom{h}{t}\left( h^2\varepsilon \right)^
{\beta(h) \log\left( 1/h^2\varepsilon \right)}n^k \leq \varepsilon^{\frac{\beta}{2}\log(1/\varepsilon)}n^k,$$ with the last inequality holding if $\varepsilon$ is small enough.
This completes the proof of the theorem.
\end{proof}
	\begin{proof}[Proof of Theorem \ref{thm:one_graph_hard}]
	By Theorem \ref{thm:aux_hard}, for every sufficiently small $\varepsilon > 0$ and for every $n \geq n_0(\varepsilon)$ there is a graph $G$ on $n$ vertices which is $\varepsilon$-far from being induced $H$-free yet contains at most $\varepsilon^{c\log(1/\varepsilon)}n^k$ (not necessarily induced) copies of $K$, the core of $H$. As $K$ is a subgraph of $H$, $G$ contains at most
	$\varepsilon^{c\log(1/\varepsilon)}n^k \cdot n^{h-k} = \varepsilon^{c\log(1/\varepsilon)} n^h$ (not necessarily induced) copies of $H$.
	\end{proof}
	\begin{proof}[Proof of Theorem \ref{thm:finite_family}]
	Write $\F = \{F_1,\dots,F_{\ell}\}$. By symmetry, it is enough to prove that $F_i$ is bipartite for some $1 \leq i \leq \ell$. Assume, by contradiction, that $F_i$ is not bipartite for every $1 \leq i \leq \ell$.
	We will show that for every sufficiently small $\varepsilon > 0$ and for every $n \geq n_0(\varepsilon)$, there is a graph $G$ which is $\varepsilon$-far from being induced $\F$-free and yet contains at most $\varepsilon^{c\log(1/\varepsilon)}n^{v(F_i)}$ copies of $F_i$ for every $1 \leq i \leq \ell$, where $c = c(\F)$ depends only on $\F$. This will imply that ${\cal P}^*_{\cal F}$ is not easily testable, in contradiction to the assumption of the theorem.
	
	Let $K = K(\F)$ be the graph defined in Subsection \ref{subsec:hom}. Let us assume, wlog, that $K$ is the core of $F_1$. We claim that the graph $G$, obtained by applying Theorem \ref{thm:aux_hard} to $H = F_1$ (and $K$), satisfies our requirements. Clearly, $G$ is $\varepsilon$-far from being induced $\F$-free, as it is $\varepsilon$-far from being induced $F_1$-free.
	
	By Theorem \ref{thm:aux_hard}, there is a homomorphism
	$g : G \rightarrow K$. Let $1 \leq i \leq \ell$ and consider an embedding $f : F_i \rightarrow G$ of $F_i$ into $G$. Then $g \circ f$ is a homomorphism from $F_i$ to $K$. By Proposition \ref{prop:core}, there is a set $X \subseteq V(F_i)$ such that $(g \circ f)|_{V(X)}$ is an isomorphism onto $K$. This means that $f(F_i) \subseteq V(G)$ contains a copy of $K$. We conclude that every copy of $F_i$ in $G$ contains a copy of $K$. By Theorem \ref{thm:aux_hard}, $G$ contains at most $\varepsilon^{c\log(1/\varepsilon)}n^k$ copies of $K$. It follows that $G$ contains at most $\varepsilon^{c\log(1/\varepsilon)} n^{v(F_i)}$ copies of $F_i$, as required.
	\end{proof}
	\subsection{Proof of Theorem \ref{thm:hard_superpoly_blowup}}\label{subsec:superpoly_blowup}
	Let $K$ be a graph with vertex set $[k]$. We say that a graph $F$ is a
	{\em blowup} of $K$ if $F$ admits a vertex-partition
	$V(F) = X_1 \cup \dots \cup X_k$ such that $X_1,\dots,X_k$ are independent sets and for every $1 \leq i < j \leq k$, if
	$(i,j) \in E(K)$ then $(X_i,X_j)$ is a complete bipartite graph and if $(i,j) \notin E(K)$ then $(X_i,X_j)$ is an empty bipartite graph.
	We say that $F$ is the {\em $s$-blowup} of $K$ if $\left| X_1 \right| = \dots = \left| X_k \right| = s$.
	
	Throughout this subsection, $C_m$ denotes the cycle of length $m$. In the proof of Theorem \ref{thm:hard_superpoly_blowup} we use the following simple proposition, whose proof appears at the end of this subsection.
	\begin{proposition}\label{prop:odd_cycle_blowup}
		Let $k$ be an odd integer and let $G$ be a blowup of $C_k$. Then $G$ is induced $C_6$-free and (not necessarily induced) $C_{\ell}$-free for every odd $3 \leq \ell < k$.
	\end{proposition}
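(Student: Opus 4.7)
The plan is to exploit the natural graph homomorphism $\pi : G \to C_k$ that sends each vertex of $X_i$ to the $i$-th vertex of $C_k$. Under $\pi$, any cycle (or closed walk) in $G$ of length $\ell$ projects to a closed walk of length $\ell$ in $C_k$, because edges of $G$ go only between consecutive classes. Identifying $V(C_k)$ with $\mathbb{Z}/k\mathbb{Z}$, such a closed walk is encoded by a sequence of steps $s_1,\dots,s_\ell \in \{+1,-1\}$ with $\sum_{i=1}^{\ell} s_i \equiv 0 \pmod k$. This reduction is the common engine for both claims.

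For the non-existence of odd $C_\ell$ with $3 \le \ell < k$, let $a$ and $b$ be the numbers of $+1$'s and $-1$'s among $s_1,\dots,s_\ell$, so that $a+b = \ell$ and $a-b \equiv 0 \pmod k$. Since $|a-b| \le \ell < k$, one is forced to have $a-b = 0$, which makes $\ell = 2a$ even, contradicting the assumption that $\ell$ is odd. Hence no closed walk of odd length less than $k$ exists in $C_k$, and therefore none in $G$.

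For induced $C_6$-freeness, suppose for contradiction that $v_1,\dots,v_6$ induce a $C_6$ in $G$, with $v_i \in X_{a_i}$, and let $s_i = a_{i+1}-a_i$ (indices mod $6$), each in $\{\pm 1\}$. The plan is to read off the forbidden chords. The distance-$3$ non-adjacencies $v_i \not\sim v_{i+3}$ say that $s_i+s_{i+1}+s_{i+2} \not\equiv \pm 1 \pmod k$; since this sum lies in $\{\pm 1,\pm 3\}$, for any odd $k \ge 5$ we have $\pm 3 \not\equiv \pm 1 \pmod k$, and so the sum must equal $\pm 3$, meaning $s_i = s_{i+1} = s_{i+2}$. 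Applying this for every $i$ forces all six steps equal, whence $\sum s_i = \pm 6$, which is nonzero mod any odd $k \ge 5$, a contradiction. The only remaining case is $k = 3$: here the distance-$2$ non-adjacencies $v_i \not\sim v_{i+2}$ force $s_i + s_{i+1} \not\equiv \pm 1 \pmod 3$, hence $s_{i+1} = -s_i$, so the signs alternate; but then $s_i+s_{i+1}+s_{i+2} = s_i = \pm 1$, making $v_i \sim v_{i+3}$ and contradicting the induced $C_6$ structure.

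The main obstacle is purely bookkeeping around the small-$k$ cases: for $k \ge 5$ the distance-$2$ non-adjacencies are automatic from the parity of $s_i+s_{i+1} \in \{-2,0,2\}$, so only the distance-$3$ constraints bite, but for $k=3$ these parities wrap around and one must instead use the distance-$2$ constraints to derive the contradiction. Once this split is handled, both parts of the proposition follow directly from the $\pm 1$ step description of closed walks in $C_k$.
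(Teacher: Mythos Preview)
Your proof is correct, but the paper takes a shorter and more direct route. For the second claim, the paper simply observes that any subgraph of $G$ on fewer than $k$ vertices misses at least one class $X_j$, and the graph induced on the remaining classes is bipartite (the classes then sit along a path in $C_k$); hence no odd cycle of length less than $k$ can appear. For the induced-$C_6$ claim, the paper notes that an induced $C_6$ cannot hit each class at most once (since $C_6$ is not a subgraph of $C_k$ for odd $k$, and for $k\le 5$ this is immediate by pigeonhole), so some class contains two vertices $u,v$ of the copy; the two $C_6$-neighbours $x,y$ of $u$ lie in classes adjacent to that of $u$, hence are also adjacent to $v$, producing a $4$-cycle $u\,x\,v\,y$ inside the alleged induced $C_6$, a contradiction.

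Your $\pm 1$ step encoding via the homomorphism $\pi:G\to C_k$ is a clean, uniform framework that handles both parts with the same machinery, and it generalises easily to other forbidden subgraphs one might want to exclude from blowups of $C_k$. The trade-off is the extra case split at $k=3$ and the need to track the distance-$2$ versus distance-$3$ constraints. The paper's argument avoids all of this by exploiting the specific structure of $C_6$ (degree~$2$, $C_4$-free) directly, which makes for a two-line proof but is less reusable.
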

	
	Recall the definition of a graph homomorphism from Subsection \ref{subsec:hom}. We will use the simple fact that $C_{2\ell+1}$ has a homomorphism into $C_{2k+1}$ for every $\ell \geq k$.
	For the proof of Theorem \ref{thm:hard_superpoly_blowup} we need the following lemma from \cite{separation}.
	\begin{lemma}\label{lem:blowup_distance}\cite{separation}
		Let $K$ be a graph on $k$ vertices, let $F$ be a graph on $f$ vertices which has a homomorphism into $K$ and let $G$ be the $\frac{n}{k}$-blowup of $K$ where $n \geq n_0(f)$. Then $G$ is $\frac{1}{2k^2}$-far from being (not necessarily induced) $F$-free.
	\end{lemma}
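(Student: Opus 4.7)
My plan is to lower-bound the number of edges that must be removed from $G$ to make it $F$-free, by exhibiting enough ``witness'' copies of $F$ inside $G$ that any $F$-free subgraph is forced to have lost many edges.

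The setup is clean: using the given homomorphism $\phi:V(F)\to V(K)$ together with the blow-up partition $V(G)=V_1\cup\cdots\cup V_k$, $|V_i|=n/k$, any injection $\psi:V(F)\to V(G)$ with $\psi(v)\in V_{\phi(v)}$ for every $v$ is automatically an embedding of $F$ into $G$, because whenever $(u,v)\in E(F)$ the pair $(V_{\phi(u)},V_{\phi(v)})$ is a complete bipartite graph in $G$. For $n\ge n_0(f)$ this yields $(1-o(1))(n/k)^f$ labelled copies of $F$ in $G$, and a direct count shows that each edge of $G$ (lying between some $V_i$ and $V_j$ with $(i,j)\in E(K)$) belongs to at most $\mu_{ij}(n/k)^{f-2}$ of these copies, where $\mu_{ij}=|\{(u,v)\in E(F):\{\phi(u),\phi(v)\}=\{i,j\}\}|$. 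A greedy edge-disjoint packing extracted from this collection already produces $\Omega(n^2/k^2)$ pairwise edge-disjoint copies of $F$, which gives the correct order of magnitude for the distance bound since each removed edge destroys at most one member of the packing.

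The main obstacle is attaining the explicit constant $\tfrac{1}{2k^2}$, which is $F$-independent, whereas both the greedy and the uniform fractional LP packings naturally involve extra factors depending on $|E(F)|$ and on $\mu_{\max}=\max_{(i,j)\in E(K)}\mu_{ij}$, which can be arbitrarily large. For example, the matching case $F=mK_2$, $K=K_2$ shows that edge-disjoint packing alone is insufficient for large $F$: the packing number is only $\Theta(n^2/m)$, yet the true distance is $\Theta(n^2)$ by K\"onig's theorem applied to $K_{n/2,n/2}$. The fix I would use is an extremal (rather than packing) argument: directly count the injective $\psi$ whose image lies entirely in $G'$, and show via inclusion-exclusion that this count is strictly positive whenever fewer than $n^2/(2k^2)$ edges have been removed, contradicting the assumed $F$-freeness of $G'$.

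The quantitative heart of that step is the comparison $|E(G)|=|E(K)|(n/k)^2\le\binom{k}{2}(n/k)^2<n^2/2$, which forces the missing-edge ``budget'' $|E(G)\setminus E(G')|<n^2/(2k^2)$ to be strictly smaller than a constant fraction of $|E(G)|$; combined with the labelled-copy count $(1-o(1))(n/k)^f$ and the per-edge copy bound $\mu_{ij}(n/k)^{f-2}$, a careful balancing (using $\sum_{(i,j)\in E(K)}\mu_{ij}=|E(F)|$ to aggregate contributions across $K$-edges) leaves at least one surviving injective embedding of $F$ into $G'$. The hypothesis $n\ge n_0(f)$ is used to absorb the negligible non-injective terms $O(f^2(n/k)^{f-1})$ and the lower-order corrections in this counting, and this is the only place where $n$ needs to be large relative to $f$.
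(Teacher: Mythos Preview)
The paper does not prove this lemma at all: it is quoted verbatim from \cite{separation} and used as a black box. So there is no ``paper's own proof'' to compare against; the only question is whether your sketch is internally sound.

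It is not. The gap is exactly where you wave your hands at the ``careful balancing'' step. Write $d_{ij}$ for the number of edges removed between $V_i$ and $V_j$, so that $\sum_{(i,j)\in E(K)} d_{ij}=d$. Your union-bound count of surviving labelled embeddings along the fixed homomorphism $\phi$ gives
\[
(1-o(1))\Big(\tfrac{n}{k}\Big)^{f}\;-\;\sum_{(i,j)\in E(K)}\mu_{ij}\,d_{ij}\Big(\tfrac{n}{k}\Big)^{f-2},
\]
so positivity requires $\sum_{(i,j)}\mu_{ij}d_{ij}<(1-o(1))(n/k)^{2}$. The identity $\sum_{(i,j)}\mu_{ij}=|E(F)|$ cannot convert this into the desired bound $d<(n/k)^{2}/2$, because the adversary is free to place all deletions on the $K$-edge with the largest $\mu_{ij}$. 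Your own matching example exposes this: for $F=mK_2$ and $K=K_2$ there is a single $K$-edge, so there is literally nothing to ``aggregate across $K$-edges'', yet $\mu_{12}=m$; the count then survives only when $d<(n/2)^{2}/m=n^{2}/(4m)$, which for $m>2$ is strictly weaker than the target $n^{2}/8$. You observed that K\"onig's theorem rescues this particular case, but that is a structural (vertex-cover) argument, not the labelled-copy counting argument you propose for the general case, and it does not follow from the inclusion--exclusion you outlined.

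In short, any proof achieving the $F$-independent constant $1/(2k^{2})$ must use more than the first-moment deletion bound on $\phi$-labelled copies; one has to exploit the blow-up structure of $G$ to find an entire $f$-blown-up copy of $K$ (or something equivalent) inside $G'$, after which $F$ embeds for free. Your sketch stops short of that and, as written, only proves the weaker statement that $G$ is $\Omega\big(1/(k^{2}\mu_{\max})\big)$-far from $F$-free.
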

	
	For a graph $F$, denote by $SG(F)$ the set of supergraphs of $F$ (namely, the set of all graphs on $V(F)$ obtained from $F$ by adding edges). Note that being (not necessarily induced) $F$-free is equivalent to being induced $SG(F)$-free. We are now ready to prove Theorem \ref{thm:hard_superpoly_blowup}.
	\begin{proof}[Proof of Theorem \ref{thm:hard_superpoly_blowup}]
		Define a sequence $\{a_i\}_{i \geq 1}$ as follows: set $a_1 = 3$ and
		$a_{i+1} = 2^{2(a_{i} + 2)^2} + 1$. Note that $a_i$ is odd for every $i \geq 1$.
		We prove the theorem with the graph family
		$$ \F =
		\{C_6\} \cup
		\bigcup_{i \geq 1}{SG \big( C_{a_i} \big)}.$$
		Since $a_1 = 3$ we have $C_3 \in \F$.
		 Note that $C_6$ is a bipartite graph and that $C_3$ is both a co-bipartite graph and a split graph. For $i \geq 1$ put
		 $\varepsilon_i = \frac{1}{2(a_i + 2)^2}$. We will show that $f_{{\cal P}^*_{\cal F}}\left( \varepsilon_i \right) \geq 2^{1/\varepsilon_i}$ for every $i \geq 1$ (recall Definition \ref{def:test}), which implies that
		 ${\cal P}^*_{\cal F}$ is not easily testable.
		
		Let $i \geq 1$ and put $k = a_i + 2$ and $f = a_{i+1}$. Since $a_i$ is odd and $a_i \geq 3$, we have that $k$ is odd and $k \geq 5$. Let $n \geq n_0(f)$ which is divisible by $k$ (where $n_0(f)$ is from Lemma \ref{lem:blowup_distance}) and let $G$ be an $\frac{n}{k}$-blowup of $C_k$. By our choice of $\varepsilon_i$ and $k$ we have
		$\varepsilon_i = \frac{1}{2k^2}$. Since $C_f$ has a homomorphism into $C_k$, Lemma \ref{lem:blowup_distance} applies that $G$ is $\varepsilon_i$-far from being $C_f$-free and hence is $\varepsilon_i$-far from being induced $SG(C_f)$-free. As
		$SG(C_f) \subseteq \F$, we conclude that $G$ is $\varepsilon_i$-far from being induced $\F$-free.
		
		Proposition \ref{prop:odd_cycle_blowup} implies that $G$ is induced $C_6$-free and that for every odd $3 \leq \ell < k$, $G$ is $C_{\ell}$-free and hence induced $SG(C_{\ell})$-free. By the definition of $\F$, if $F \in \F$ is an induced subgraph of $G$ then
		$|V(F)| \geq a_{i+1} > 2^{2(a_{i} + 2)^2} = 2^{1/\varepsilon_i}.$ Here we used the definition of the sequence $\{a_i\}_{i \geq 1}$ and our choice of $\varepsilon_i$.
		We conclude that every set $Q \subseteq V(G)$ of size less than $2^{1/\varepsilon_i}$ is induced $\F$-free, implying that
		$f_{{\cal P}^*_{\cal F}}\left( \varepsilon_i \right) > 2^{1/\varepsilon_i}$, as required.
	\end{proof}
	We remark that using essentialy the same proof as above, we could have proven the following strengthening of Theorem \ref{thm:hard_superpoly_blowup}. For every function
	$g : (0,1/2) \rightarrow \mathbb{N}$ there is a graph family $\F$ that contains a bipartite graph, a co-bipartite graph and a split graph, and there is a decreasing sequence $\{\varepsilon_i\}_{i \geq 1}$ with $\varepsilon_i \rightarrow 0$ such that
	$f_{{\cal P}^*_{\cal F}}(\varepsilon_i) > g(\varepsilon_i)$ for every
	$i \geq 1$.
	
	\begin{proof}[Proof of Proposition \ref{prop:odd_cycle_blowup}]
	As $G$ is a blow-up of $C_k$, it has a partition
	$V(G) = X_1 \cup \dots \cup X_k$ into independent sets such that $(X_i,X_j)$ is a complete bipartite graph if
	$|i-j| \equiv \pm 1 \pmod{k}$ and an empty bipartite graph otherwise. Assume, by contradiction, that
	there is $Z \subseteq V(G)$ such that $G[Z] \cong C_6$. Since $C_6$ is not a subgraph of $C_k$, there must be $1 \leq i \leq k$ such that
	$|Z \cap X_i| \geq 2$. Assume wlog that there are distinct
	$u,v \in Z \cap X_1$. By the structure of $C_6$, there are distinct $x,y \in Z$ such that $(u,x), (u,y) \in E(G)$. Then
	$x,y \in X_2 \cup X_k$, implying that $(v,x), (v,y) \in E(G)$. Thus, $uxvy$ is a $4$-cycle, in contradiction to the fact that
	$G[Z] \cong C_6$.
	
	For the second part of the proposition, simply observe that every subgraph of $G$ with less than $k$ vertices is bipartite.
	\end{proof}
	
\end{document}